\theoremstyle{plain}
\newtheorem{theorem}{Theorem}[section]
\newtheorem{lemma}[theorem]{Lemma}
\newtheorem{proposition}[theorem]{Proposition}
\newtheorem{conjecture}[theorem]{Conjecture}
\newtheorem{Bounded Diameter Lemma}[theorem]{Bounded Diameter Lemma}
\theoremstyle{definition}
\newtheorem{definition}[theorem]{Definition}
\newtheorem{remark}[theorem]{Remark}
\newcommand{\Hmm}[1]{\leavevmode{\marginpar{\tiny%
$\hbox to 0mm{\hspace*{-0.5mm}$\leftarrow$\hss}%
\vcenter{\vrule depth 0.1mm height 0.1mm width \the\marginparwidth}%
\hbox to
0mm{\hss$\rightarrow$\hspace*{-0.5mm}}$\\\relax\raggedright #1}}}
\DeclareFixedFont{\Acknowledgment}{OT1}{cmr}{bx}{n}{14pt}
\begin{document}
%\pagewiselinenumbers
\numberwithin{equation}{section}

\title{Producing circle patterns via configurations}
\author{Ze Zhou}
\date{}

\maketitle

\begin{abstract}
This paper studies circle patterns from the viewpoint of configurations. By using the topological degree theory, we extend the  Koebe-Andreev-Thurston Theorem to include circle patterns with obtuse exterior intersection angles. As a consequence, we obtain a generalization of Andreev's Theorem which allows obtuse dihedral angles.

\medskip
\noindent{\bf Mathematics Subject Classifications (2020):} 52C26, 51M10, 57M50.

\end{abstract}

%\tableofcontents

%\setcounter{section}{-1}

\section{Introduction}
A circle pattern $\mathcal P$ on the Riemann sphere $\hat{\mathbb C}$ is a collection of closed disks on $\hat{\mathbb C}$. The contact graph $G_\mathcal P$ of such a pattern $\mathcal P$ is the graph having a vertex for each disk, and having an edge between the vertices $u$ and $w$ if the corresponding disks $D_u, D_w$ intersect each other. For each edge $e=[u,w]$ of $G_\mathcal P$, we have the exterior intersection angle $\Theta(e)\in[0,\pi)$. We refer the readers to Stephenson's monograph~\cite{Stephenson} for basic background on circle patterns.

Let us consider a natural question: Given a graph $G$ and a function $\Theta:E\to[0,\pi)$ defined on the edge set of $G$, is there a circle pattern whose contact graph is isomorphic to $G$ and whose exterior intersection angle function is $\Theta$? If so, to what extent is the circle pattern unique? The following theorem was proved by Marden-Rodin~\cite{Marden-Rodin} based on ideas of Thurston~\cite{Thurston}. For the special case of tangent patterns, the result was due to Koebe~\cite{Koebe}. We say a circle pattern $\mathcal P=\{D_v\}_{v\in V}$ on $\hat{\mathbb C}$ is \textbf{irreducible} if $\cup_{v\in A}D_v\subsetneq \hat{\mathbb C}$ for every proper subset $A\subsetneq V$.

\begin{theorem}[Marden-Rodin]\label{T-1-1}
Let $\mathcal T$ be a triangulation of the sphere and let $\Theta:E\to [0,\pi/2]$ be a function satisfying the conditions below:
\begin{itemize}
\item[$(i)$] If $e_1,e_2,e_3$ form a simple closed curve, then $\sum_{\mu=1}^3\Theta(e_\mu)<\pi$.
\item[$(ii)$] If $e_1,e_2,e_3,e_4$ form a simple closed curve, then $\sum_{\mu=1}^4\Theta(e_\mu)<2\pi$.
\end{itemize}
Then there exists an \textbf{irreducible}
circle pattern $\mathcal P$ on the Riemann sphere $\hat{\mathbb C}$ with contact graph isomorphic to the $1$-skeleton of $\mathcal T$ and exterior intersection angles given by $\Theta$. Furthermore, $\mathcal P$ is unique up to linear and anti-linear fractional maps.
\end{theorem}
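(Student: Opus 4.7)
My plan is to establish existence via a topological-degree argument after parameterizing circle patterns by their radii, and to prove uniqueness by a maximum-principle argument in the spirit of Thurston. First I would kill the M\"obius freedom by fixing one distinguished triangle $T_0\in\mathcal T$ and requiring the three associated disks to lie in a canonical position. Any candidate pattern with contact graph $\mathcal T^{(1)}$ and angles $\Theta$ is then encoded by a positive function $r:V\to(0,\infty)$ giving the spherical radii. For each triangle $T=[v_1v_2v_3]\in\mathcal T$, the radii $r(v_i)$ together with the prescribed exterior angles $\Theta$ on its three edges determine, via the spherical law of cosines, a unique spherical triangle $\Delta_T$ with vertices at the three disk centers; write $\alpha^T_{v_i}(r,\Theta)$ for its angle at the center of $D_{v_i}$. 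A radius function defines an honest pattern realizing $(\mathcal T,\Theta)$ if and only if
\[
K_v(r,\Theta)\;:=\;2\pi-\sum_{T\ni v}\alpha^T_v(r,\Theta)\;=\;0\qquad\text{for every }v\in V.
\]

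Second, let $\mathcal A\subset[0,\pi/2]^E$ denote the open set of angle functions satisfying (i) and (ii), and let $\mathcal R$ denote the space of normalized radius functions. The equation $K(r,\Theta)=0$ cuts out a subset $Z\subset\mathcal R\times\mathcal A$, and existence amounts to showing that the projection $\pi_{\mathcal A}:Z\to\mathcal A$ is surjective. I would prove this by computing its degree, and the crucial ingredient is \emph{properness}: if $(r_n,\Theta_n)\in Z$ with $\Theta_n\to\Theta_\infty\in\mathcal A$, then $\{r_n\}$ stays in a compact subset of $\mathcal R$. Assume for contradiction that it does not; passing to a subsequence one isolates a maximal non-empty proper cluster $A\subsetneq V$ along which the radii degenerate in a coherent way. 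Cutting $\mathcal T$ along the separating curve between $A$ and $V\setminus A$ and applying Gauss--Bonnet to the resulting spherical region extracts a closed edge-cycle in $\mathcal T^{(1)}$ whose $\Theta$-values must saturate one of the forbidden inequalities: $3$-cycles force (i) to fail in the limit, and $4$-cycles force (ii) to fail, contradicting $\Theta_\infty\in\mathcal A$. This is the main technical obstacle, since it requires a careful case analysis of how the geometry can degenerate and a matching area argument to produce the offending cycle.

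Third, with properness in hand, existence follows from a degree computation at a single accessible value of $\Theta$. I would take $\Theta_0\equiv 0$ (the tangent case) and invoke Koebe's theorem, which supplies a unique (up to M\"obius) circle packing, hence a single normalized $r_0$ with $K(r_0,\Theta_0)=0$. A local transversality check, using the classical monotonicities $\partial\alpha^T_v/\partial r(v)<0$ and $\partial\alpha^T_v/\partial r(w)>0$ for a neighboring vertex $w$, shows that the Jacobian of $K$ in $r$ is essentially a negative-definite discrete Laplacian, so $\pi_{\mathcal A}$ is a local diffeomorphism at $(r_0,\Theta_0)$ and its degree equals $\pm 1$. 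Since $\mathcal A$ is connected, properness together with non-zero degree forces $\pi_{\mathcal A}$ to be surjective, proving existence.

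Finally, I would deduce uniqueness from a maximum principle. Given two normalized solutions $r,r'$ with the same $\Theta$, look at the set $V^{\star}:=\arg\max_{v\in V}\log(r(v)/r'(v))$. The monotonicity of $\alpha^T_v$ in the radii implies that at any vertex $v\in V^{\star}$ one has $\sum_{T\ni v}\alpha^T_v(r,\Theta)\le\sum_{T\ni v}\alpha^T_v(r',\Theta)$, with equality only if every neighbor of $v$ also lies in $V^{\star}$. Both sides equal $2\pi$, so equality must hold and $V^{\star}$ is closed under taking neighbors; connectedness of $\mathcal T^{(1)}$ then gives $V^{\star}=V$. By symmetry the minimum set is all of $V$ as well, so $r\equiv r'$, and combined with the normalization this shows $\mathcal P$ is unique up to linear and anti-linear fractional transformations.
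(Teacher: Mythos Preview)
The paper does not actually prove Theorem~\ref{T-1-1}; it is quoted as a known result of Marden--Rodin (building on Thurston and, for the tangent case, Koebe) and is then used as a black box---for instance to guarantee that the configuration spaces $M_E,M_{\mathcal T},M_G,M_{IG}$ are non-empty and, in the proof of Theorem~\ref{T-4-5}, to pin down the degree at the reference value $\Theta^\diamond$. So there is no ``paper's own proof'' to compare your proposal against. Your outline is nonetheless close in spirit to the machinery the paper develops for its genuinely new results (Theorems~\ref{T-3-1} and~\ref{T-4-1}): a topological-degree argument on a configuration space, with properness established through the apex-curvature analysis of Propositions~\ref{P-3-5}--\ref{P-3-6}.

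That said, your sketch has a couple of real gaps. First, ``$K_v=0$ for all $v$'' only says the centers form a geodesic triangulation isotopic to $\mathcal T$; it does \emph{not} by itself give that non-adjacent disks are disjoint (so that the contact graph is exactly $\mathcal T^{(1)}$) nor that the pattern is irreducible. In the non-obtuse regime both follow from the elementary fact that each disk is contained in its combinatorial star (cf.\ the argument in Lemma~\ref{L-3-8}), but this step must be supplied. Second, your properness argument is too schematic: the boundary of the degenerating cluster $V_0$ is in general a $k$-cycle with $k\ge 3$, and the Euler-characteristic/curvature count yields $\sum_\mu\Theta(e_\mu)\ge(k-2)\pi$ along it; the contradiction then uses that for $\Theta\in[0,\pi/2]^E$ conditions $(i)$ and $(ii)$ force $\sum_\mu\Theta(e_\mu)<(k-2)\pi$ for \emph{every} $k$ (the cases $k\ge5$ being automatic since $k\pi/2<(k-2)\pi$). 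You also need to rule out the other degeneration $r_i\to\pi$. Finally, the monotonicity $\partial\alpha^T_v/\partial r(w)>0$ you invoke for the maximum principle genuinely relies on $\Theta\le\pi/2$, so that dependence should be recorded.
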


Thurston~\cite[Chap. 13]{Thurston} observed that there was a direct connection between circle patterns and hyperbolic polyhedra. More precisely, in the Poincar\'{e} model of hyperbolic 3-space $\mathbb H^3$, we associate each closed disk $D_v$ on $\hat{\mathbb C}=\partial \mathbb H^3$ with the hyperbolic half-space which is identical to the closed convex hull of ideal points in $\hat{\mathbb C}\setminus D_v\subset\partial \mathbb H^3$. Under suitable conditions, the intersection of half-spaces of a circle pattern gives a hyperbolic polyhedron, with combinatorics dual to the contact graph and dihedral angles equal to the corresponding exterior intersection angles.

Andreev~\cite{Andreev} obtained the following result which later played a significant role in the proof of Thurston's Hyperbolization Theorem for Haken 3-manifolds~\cite{Otal}. See also the work of Roeder-Hubbard-Dunbar \cite{Roeder-Hubbard-Dunbar} for a more detailed proof which corrects an error in Andreev's original paper~\cite{Andreev}. Given an abstract polyhedron $P$, we call a simple closed curve $\Gamma$ formed of $k$ edges of the dual complex $P^\ast$ a \textbf{$k$-circuit}, and if all of the endpoints of the edges of $P$ intersected by $\Gamma$ are distinct, we call such a circuit a \textbf{prismatic $k$-circuit}.

\begin{theorem}[Andreev]\label{T-1-2}
Let $P$ be an abstract trivalent polyhedron with more than four faces. Assume that $\Theta:E\to(0,\pi/2]$ is a function satisfying the conditions below:
\begin{itemize}
\item[$(i)$] Whenever three distinct edges $e_1,e_2,e_3$  meet at a vertex,  then $\sum_{\mu=1}^3\Theta(e_\mu)>\pi$.
\item[$(ii)$] Whenever $\Gamma$ is a \textbf{prismatic 3-circuit} intersecting edges $e_1,e_2,e_3$, then $\sum_{\mu=1}^3\Theta(e_\mu)<\pi$.
\item[$(iii)$] Whenever $\Gamma$ is a \textbf{prismatic 4-circuit} intersecting edges $e_1,e_2,e_3,e_4$, then $\sum_{\mu=1}^4\Theta(e_\mu)<2\pi$.
\item[$(iv)$] Whenever there is a four sided face bounded by edges $e_1,e_2,e_3,e_4$, enumerated successively, with edges $e_{12},e_{23},e_{34},e_{41}$ entering the four vertices (edge $e_{ij}$ connects to the common end of $e_i$ and $e_j$), then
\[
\begin{aligned}
&\Theta(e_{1})+\Theta(e_{3})
+\Theta(e_{12})+\Theta(e_{23})+\Theta(e_{34})+\Theta(e_{41})\,<\,3\pi,
\quad\text{and}\\
&\Theta(e_{2})+\Theta(e_{4})+\Theta(e_{12})
+\Theta(e_{23})+\Theta(e_{34})+\Theta(e_{41})\,<\,3\pi.
\end{aligned}
\]
\end{itemize}
Then there exists a compact convex hyperbolic polyhedron $Q$ combinatorially equivalent to $P$ with dihedral angles given by $\Theta$. Furthermore, $Q$ is unique up to isometries of $\mathbb H^3$.
\end{theorem}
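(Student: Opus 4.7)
Let $\mathcal T := P^\ast$ denote the dual triangulation of $S^2$. Since $P$ is trivalent with more than four faces, $\mathcal T$ is a triangulation of the sphere with more than four vertices, and its vertices, edges, and triangular faces correspond respectively to faces, edges, and vertices of $P$. Transport $\Theta$ along the edge bijection to an angle function $\Theta^\ast : E(\mathcal T) \to (0, \pi/2]$.

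The plan is to apply Theorem~\ref{T-1-1} to $(\mathcal T, \Theta^\ast)$, obtain an irreducible circle pattern $\mathcal P = \{D_v\}$ on $\hat{\mathbb C}$, and form $Q := \bigcap_v H_v$ via the half-space construction recalled in the paragraph preceding Theorem~\ref{T-1-2}. To apply Theorem~\ref{T-1-1}, I verify its hypotheses as follows: under the duality, prismatic 3- and 4-circuits of $P$ correspond exactly to simple closed 3- and 4-edge curves of $\mathcal T$ whose edges have distinct endpoints (i.e., that do not bound a triangular face of $\mathcal T$), so Andreev's (ii) and (iii) feed into (i) and (ii) of Theorem~\ref{T-1-1} on these non-facial cycles; condition (iv), attached to quadrilateral faces of $P$, supplies the remaining inequalities that rule out reducible patterns. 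The facial 3-cycles of $\mathcal T$, which correspond to vertices of $P$, are not constraints coming from Theorem~\ref{T-1-1} but are governed locally by the circle-triangle geometry of the pattern; Andreev's (i) enters the picture here, as a compactness criterion for the polyhedron produced below rather than as a hypothesis for Theorem~\ref{T-1-1}.

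By Thurston's correspondence recalled in the excerpt, $Q = \bigcap_v H_v$ is a convex hyperbolic polyhedron whose combinatorial structure is dual to $\mathcal T$, hence equivalent to $P$, and whose dihedral angles equal the exterior intersection angles of $\mathcal P$, hence equal $\Theta$. Compactness of $Q$ then follows from Andreev's (i): three bounding planes in $\mathbb H^3$ meet at an interior point (as opposed to an ideal or hyperideal point) precisely when the sum of their three pairwise dihedral angles exceeds $\pi$, so $\sum \Theta(e_\mu) > \pi$ at each vertex of $P$ forces each vertex of $Q$ into the interior of $\mathbb H^3$. Uniqueness of $Q$ up to $\Isom(\mathbb H^3)$ is inherited from the uniqueness clause of Theorem~\ref{T-1-1} (up to linear and anti-linear fractional transformations of $\hat{\mathbb C}$), since those transformations are exactly the boundary traces of isometries of $\mathbb H^3$.

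The main technical obstacle is the compactness verification: showing that Andreev's (i) together with the pattern produced by Theorem~\ref{T-1-1} rules out degenerate local configurations at facial 3-cycles of $\mathcal T$ that would yield ideal or hyperideal vertices of $Q$, and that the extra quadrilateral-face inequalities (iv) are precisely what is needed on the circle-pattern side to force $\mathcal P$ to be irreducible and to avoid the collapse of any would-be edge into a tangency. The combinatorial bookkeeping translating (iv) into the irreducibility hypothesis of Theorem~\ref{T-1-1} requires careful case analysis by the structure of quadrilateral faces of $P$, but the conceptual content is exhausted by the dictionary of the second paragraph above.
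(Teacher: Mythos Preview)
Your approach has a fatal gap at the very first step. Condition~(i) of Theorem~\ref{T-1-1} requires $\sum_{\mu=1}^3\Theta(e_\mu)<\pi$ for \emph{every} simple closed curve formed by three edges of $\mathcal T$, including the boundaries of the triangular faces. But those facial $3$-cycles correspond under duality to the vertices of $P$, where Andreev's hypothesis~(i) says $\sum_{\mu=1}^3\Theta(e_\mu)>\pi$. So the data $(\mathcal T,\Theta^\ast)$ never satisfy the hypotheses of Theorem~\ref{T-1-1}, and your sentence ``the facial $3$-cycles of $\mathcal T$ \dots\ are not constraints coming from Theorem~\ref{T-1-1}'' is simply false. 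This is not a technicality: the regime $\sum\Theta>\pi$ at each face is exactly the regime in which the three disks cover their common interstice, so the resulting pattern has no interstices at all---which is why it can produce a \emph{compact} polyhedron. Theorem~\ref{T-1-1} lives in the opposite regime and cannot be invoked. Your remarks about condition~(iv) supplying ``the remaining inequalities that rule out reducible patterns'' are likewise off: irreducibility is a conclusion of Theorem~\ref{T-1-1}, not a hypothesis to be verified, and condition~(iv) plays no such role.

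For context, the paper does not prove Theorem~\ref{T-1-2} at all; it is quoted as Andreev's result (with the Roeder--Hubbard--Dunbar correction) and is actually \emph{used} as an input in Lemma~\ref{L-3-8}. The paper's own contribution in this direction is Theorem~\ref{T-1-8}, whose proof goes through Theorem~\ref{T-3-1}---a circle-pattern existence theorem tailored to the no-interstice case via the extra condition~$\mathrm{\mathbf{(m5)}}$ ($\sum\Theta\geq\pi$ on each triangle), established by a topological-degree argument rather than by reduction to Theorem~\ref{T-1-1}. If you want to reprove Andreev's theorem along the paper's lines, that is the machinery you would need, not Marden--Rodin.
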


\begin{remark}
If $P$ is not the triangular prism, condition $(iv)$ is implied by conditions $(ii)$ and $(iii)$ (see~\cite[Proposition 1.5]{Roeder-Hubbard-Dunbar}). Moreover, the hyperbolic polyhedra in above theorem correspond to \textbf{irreducible} circle patterns (see Lemma~\ref{L-3-8}).
\end{remark}

Thurston~\cite[Theorem 13.1.1]{Thurston} studied existence and rigidity of circle patterns on surfaces of genus $g>0$ with given non-obtuse exterior intersection angles. An adaptation of Thurston's consequence to Riemann sphere was proved by Bowers-Stephenson~\cite{Bowers-Stephenson} using ideas analogous to the famous Uniformization Theorem. In some sense, the work of Bowers-Stephenson~\cite{Bowers-Stephenson} provided a unified version of Theorem~\ref{T-1-1} and Theorem~\ref{T-1-2}.

One may ask whether it is possible to relax the requirement of non-obtuse angles in these theorems (see, e.g.,~\cite{He}). Rivin-Hodgson~\cite{Rivin-Hodgson} made a breakthrough by describing  all compact convex hyperbolic polyhedra in terms of a generalized Gauss map. Nonetheless, a remaining problem of Rivin-Hodgson's work~\cite{Rivin-Hodgson} is that there is not any satisfactory way of determining the combinatorics. Rivin~\cite{Rivin} later characterized the subclass of ideal hyperbolic polyhedra with arbitrary dihedral angles. After that, similar results were obtained by Bao-Bonahon~\cite{Bao-Bonahon} for hyperideal polyhedra. What is more, there are parallel results in the works of Bobenko-Springborn~\cite{Bobenko-Springborn} and Schlenker~\cite{Schlenker} for ideal and hyperideal circle patterns, respectively. Beyond the above settings, Zhou~\cite{Zhou}, Ge-Hua-Zhou~\cite{Ge-Hua-Zhou} and Jiang-Luo-Zhou~\cite{Jiang-Luo-Zhou} recently derived some direct generalizations of Marden-Rodin Theorem. Although allowing obtuse angles, these results~\cite{Zhou,Ge-Hua-Zhou,Jiang-Luo-Zhou} can not rule out the possibility that some combinatorially non-adjacent disks overlap.

A simple arc $\Gamma$ formed by edges of a triangulation $\mathcal T$ is said to be \textbf{homologically adjacent} if there is an edge between the starting and ending points of $\Gamma$, otherwise $\Gamma$ is said to be \textbf{homologically non-adjacent}. Below is our main result.

\begin{theorem}\label{T-1-4}
Let $\mathcal T$ be a triangulation of the sphere with more than four vertices. Assume that $\Theta:E\to [0,\pi)$ is a function satisfying the conditions below:
\begin{itemize}
\item[$\mathrm{\mathbf{(c1)}}$] If $e_1,e_2,e_3$ form the boundary of a triangle of $\mathcal T$, then $ \Theta(e_1)+\Theta(e_2)<\Theta(e_3)+\pi$, $\Theta(e_2)+\Theta(e_3)<\Theta(e_1)+\pi$, $\Theta(e_3)+\Theta(e_1)<\Theta(e_2)+\pi$.
\item[$\mathrm{\mathbf{(c2)}}$] If $e_1,e_2$ form a   \textbf{homologically non-adjacent} arc, then $\Theta(e_1)+\Theta(e_2)\leq\pi$, and one of the inequalities is strict when $\mathcal T$ is the boundary of a triangular bipyramid.
\item[$\mathrm{\mathbf{(c3)}}$] If $e_1,e_2,e_3$ form a simple closed curve separating the vertices of $\mathcal T$, then $\sum_{\mu=1}^3\Theta(e_\mu)<\pi$.
\item[$\mathrm{\mathbf{(c4)}}$] If $e_1,e_2,e_3,e_4$ form a simple closed curve separating the vertices of $\mathcal T$, then $\sum_{\mu=1}^4\Theta(e_\mu)<2\pi$.
\end{itemize}
Then there exists an \textbf{irreducible} circle pattern $\mathcal P$ on the Riemann sphere $\hat{\mathbb C}$ with contact graph isomorphic to the $1$-skeleton of $\mathcal T$ and exterior intersection angles given by $\Theta$.
\end{theorem}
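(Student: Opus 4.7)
The plan is to use a continuity/degree argument in the configuration space of circle patterns. Fix the triangulation $\mathcal T$ with vertex set $V$ and edge set $E$; because $\mathcal T$ triangulates the sphere, $|E|=3|V|-6$. Let $\mathcal C(\mathcal T)$ be the space of labelled $|V|$-tuples of closed round disks $(D_v)_{v\in V}$ on $\hat{\mathbb C}$, modulo $\mathrm{PSL}_2(\mathbb C)$, in which each pair $(D_u,D_w)$ corresponding to an edge $[u,w]\in E$ meets properly (not nested, not identical) and the combinatorics of mutual intersections is compatible with the $1$-skeleton of $\mathcal T$. This is a smooth manifold of dimension $3|V|-6=|E|$. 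Define the angle map
\[
\Phi\colon\mathcal C(\mathcal T)\longrightarrow [0,\pi)^{E},\qquad
\Phi\bigl((D_v)_v\bigr)=\bigl(\Theta(e)\bigr)_{e\in E},
\]
recording the exterior intersection angle across each edge. The source and target have the same dimension, and it is natural to try to show that the restriction of $\Phi$ to a suitable component is a proper local homeomorphism onto the open admissible region $\mathcal A\subset[0,\pi)^E$ cut out by conditions $(\mathbf{c1})$--$(\mathbf{c4})$.

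First I would establish a base case using Theorem~\ref{T-1-1}. Pick any $\Theta_0\in\mathcal A$ with all values in $[0,\pi/2]$ (for instance the tangent case $\Theta_0\equiv 0$, or more generally a Marden--Rodin data); Marden--Rodin gives a unique irreducible circle pattern realizing $\Theta_0$, hence a point $\mathbf p_0\in\Phi^{-1}(\Theta_0)$. Uniqueness up to Möbius transformations combined with the rigidity/local injectivity computation (either Rivin's Schl\"afli-type formula or a direct Jacobian calculation) should give that $\Phi$ is a local diffeomorphism at $\mathbf p_0$, so the topological degree of $\Phi$ at $\Theta_0$ (on the connected component $\mathcal C^\ast$ of $\mathbf p_0$) is $\pm 1$. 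Then, assuming properness on the preimage $\Phi^{-1}(\mathcal A)\cap\mathcal C^\ast$, the degree is constant on the connected region $\mathcal A$, and since it is nonzero every $\Theta\in\mathcal A$ is attained. Irreducibility of the resulting patterns can be read off from condition $(\mathbf{c2})$, which is what prevents a disk from being absorbed into the complement of the union of the remaining disks.

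The genuine heart of the matter is \emph{properness}: I need to show that along any sequence $\mathbf p_n\in\mathcal C^\ast$ with $\Phi(\mathbf p_n)\to\Theta_\ast\in\mathcal A$, the patterns $\mathbf p_n$ subconverge in $\mathcal C^\ast$. After normalizing by Möbius transformations (say, fixing three prescribed points on the sphere in the spirit of~\cite{Marden-Rodin}), the possible degenerations split into (a) two combinatorially non-adjacent disks $D_u$, $D_w$ merging or becoming tangent, (b) a disk collapsing to a point or swallowing the whole sphere, and (c) an edge angle approaching $\pi$ (the disks becoming internally tangent). The strategy is to translate each forbidden degeneration into a violation of one of $(\mathbf{c1})$--$(\mathbf{c4})$: a standard neighbourhood/link-of-vertex argument in the triangulation shows that if two non-adjacent disks become tangent or overlap, then the closed surface of $\mathcal T$ contains either a $3$-circuit or $4$-circuit separating the vertices, or a pair of edges forming a homologically non-adjacent arc, along which the hyperbolic trigonometry of the associated ideal polyhedron forces the corresponding angle sum to equal or exceed the critical value ruled out by $(\mathbf{c2})$, $(\mathbf{c3})$, or $(\mathbf{c4})$; similarly a collapsing triangle violates $(\mathbf{c1})$. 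The triangular bipyramid exception in $(\mathbf{c2})$ is exactly the borderline combinatorics where equality in $(\mathbf{c2})$ can be achieved by a genuine degenerate configuration, so the strict inequality there is needed for properness.

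The main obstacle I expect is (a)---controlling the geometry when disks indexed by non-edge vertex pairs approach each other. Classical Marden--Rodin and Andreev arguments handle this by showing that the contact graph itself cannot gain new edges in the limit, but in the obtuse regime this no longer follows from local considerations, which is precisely why earlier generalizations \cite{Zhou,Ge-Hua-Zhou,Jiang-Luo-Zhou} could not exclude such phantom intersections. The novelty is to formulate the obstruction via \emph{homologically non-adjacent arcs}, and I would prove the key lemma that a limiting non-edge tangency $D_u\cap D_w\neq\emptyset$ produces a homologically non-adjacent arc of length two on which the limit angles violate $(\mathbf{c2})$; this hinges on a careful planar/combinatorial analysis of the dual disk in $\mathcal T$ bounded by the circuit of edges adjacent to both $u$ and $w$. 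Once this obstruction lemma is in place, properness follows, the degree computation gives a preimage of every $\Theta\in\mathcal A$, and irreducibility is automatic, completing the proof of Theorem~\ref{T-1-4}.
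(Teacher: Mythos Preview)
Your high-level strategy---topological degree on an $|E|$-dimensional configuration space, base point from the non-obtuse theory, and properness via a degeneration analysis---matches the paper's approach quite closely. Your identification of the ``key lemma'' is also essentially correct: the paper shows (Lemma~\ref{L-3-2} plus Lemma~\ref{L-2-6}) that a limiting non-edge tangency $D_\alpha\cap D_\beta=\{p\}$ forces $p\in\mathbb D_\eta$ for some third vertex $v_\eta$, whence $\Theta([v_\alpha,v_\eta])+\Theta([v_\eta,v_\beta])>\pi$ along the homologically non-adjacent arc through $v_\eta$, contradicting $(\mathbf{c2})$.

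There is, however, a genuine gap in your treatment of degeneration (b). You write ``a collapsing triangle violates $(\mathbf{c1})$,'' but this is not the mechanism. Condition $(\mathbf{c1})$ constrains the \emph{exterior intersection angles}, and those stay perfectly admissible as radii shrink; nothing in $(\mathbf{c1})$ alone prevents a single disk (or a cluster of disks) from collapsing. The paper's argument (Proposition~\ref{P-3-5}, using Proposition~\ref{P-3-6}) is considerably more global: one lets $V_0\subset V$ be the set of vertices whose radii tend to $0$, sums the apex curvatures $K_{i,n}=2\pi-\sigma_{i,n}$ over $V_0$, and uses Lemma~\ref{L-2-5} together with an Euler-characteristic count to obtain in the limit
\[
0\;=\;-\sum_{(e,u)\in Lk(V_0)}\bigl[\pi-\Theta_\infty(e)\bigr]\;+\;2\pi\,\chi\bigl(S(V_0)\bigr).
\]
One then passes to a simply-connected component $S_{\tau_0}$ of the star $S(V_0)$ and observes that its boundary link is a closed (possibly non-simple) edge curve; the resulting inequality $\sum\Theta(e_\mu)\ge(k-2)\pi$ is shown to be impossible by the combinatorial Lemma~\ref{L-2-1}, which synthesizes \emph{all four} conditions $(\mathbf{c1})$--$(\mathbf{c4})$ (and an inductive reduction on $k$). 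This Euler-characteristic/curvature step is the real engine behind non-collapse and is absent from your outline.

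Two smaller points. First, the paper does not work uniformly modulo $\mathrm{PSL}_2(\mathbb C)$ but splits into two regimes: if every triangle satisfies $\sum\Theta\ge\pi$ (no interstices) the analysis is spherical and the base point comes from Andreev's theorem (Lemma~\ref{L-3-8}); if some triangle has $\sum\Theta<\pi$ one puts $\infty$ in an interstice and works in the plane with Marden--Rodin as base. This split is driven by the choice of normalization needed for compactness and is worth anticipating. Second, irreducibility is not ``read off from $(\mathbf{c2})$''; the paper proves it in a separate step (Step~4 of Lemma~\ref{L-3-4}) by locating, for each proper $A\subsetneq V$, a point near $\partial D_l\cap\partial D_j\cap D_h$ that no disk in $A$ can cover, and this relies on $(\mathbf{c3})$ via Lemma~\ref{L-2-7}.
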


\begin{remark}
Condition $\mathrm{\mathbf{(c1)}}$ was introduced by Zhou~\cite{Zhou} in view of the fact that it is satisfied when $\Theta(e_1),\Theta(e_2),\Theta(e_3)$ form the inner angles of a spherical triangle. Condition $\mathrm{\mathbf{(c2)}}$ is motivated by an angle relation (see Lemma~\ref{L-2-6}) for some three-circle configurations and is mainly applied to controlling the combinatorial type of the contact graph. In addition, we point out that conditions $\mathrm{\mathbf{(c1)}}, \mathrm{\mathbf{(c2)}}$ exclude some extreme cases (see, for example, the pattern in Figure~\ref{F-1}) in the work of Bowers-Stephenson~\cite{Bowers-Stephenson}. More details can be seen in Section~\ref{S-3}.
\end{remark}

\begin{figure}[htbp]\centering
\includegraphics[width=0.52\textwidth]{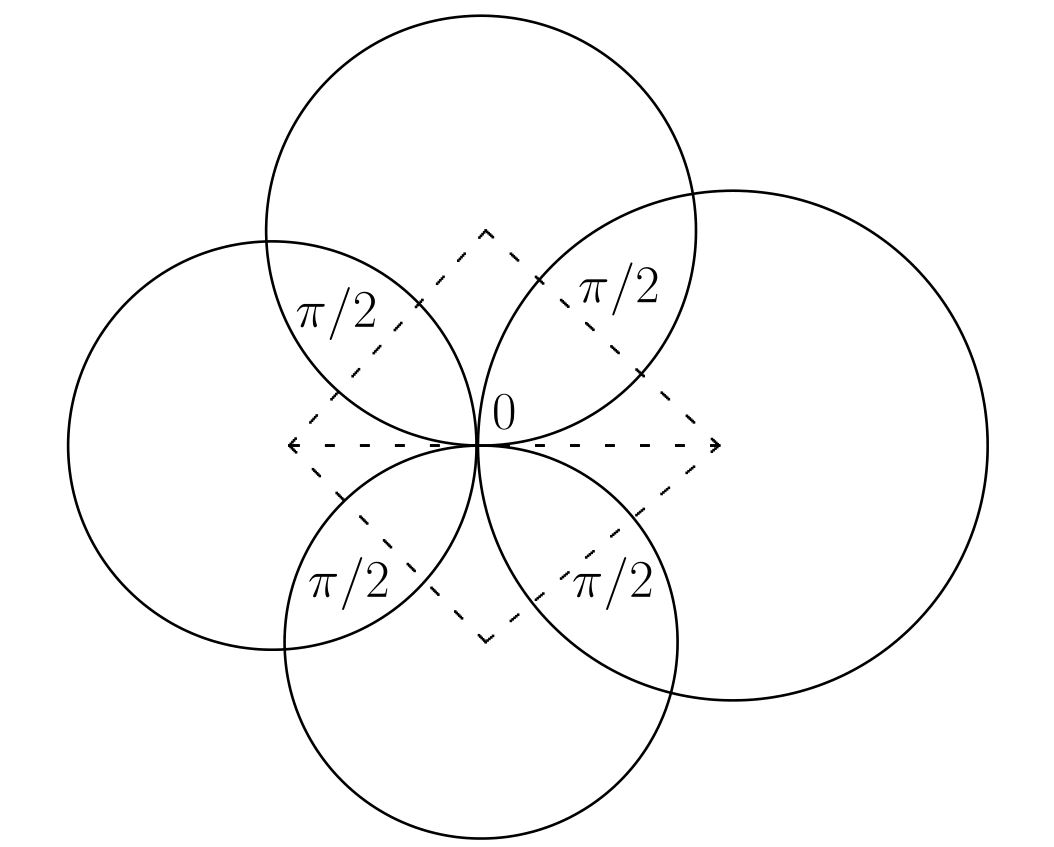}\label{F-1}
\caption{The exterior intersection function violates condition $\mathrm{\mathbf{(c1)}}$}
\end{figure}

Let $W$ be the set of functions $\Theta:E\to [0,\pi)$ satisfying the  conditions of Theorem~\ref{T-1-4}. Then $W$ is a convex subset of $[0,\pi)^{|E|}$. Sard's Theorem (Theorem~\ref{T-6-2}) provides an insight into local rigidity.
\begin{theorem}\label{T-1-6}
For almost every $\Theta\in W$, up to linear and anti-linear fractional maps, there are at most finitely many \textbf{irreducible} circle patterns on $\hat{\mathbb C}$ realizing the data $(\mathcal T,\Theta)$.
\end{theorem}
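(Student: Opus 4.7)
The plan is to realize the set of irreducible circle patterns (modulo linear and anti-linear fractional maps) as a smooth manifold of the same dimension as $W$, introduce the smooth angle map into $W$, invoke Sard's Theorem for discrete fibers over almost every $\Theta$, and finally upgrade discreteness to finiteness via a properness argument.

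First, I would build the configuration space. Each closed disk $D_v\subset\hat{\mathbb C}$ is parametrized by three real parameters (say its Euclidean center and radius after choosing an affine chart), so the space of ordered $|V|$-tuples of disks is a smooth manifold of dimension $3|V|$. The group $G$ of linear and anti-linear fractional maps is a six-dimensional real Lie group, and on irreducible configurations it acts with discrete stabilizer, yielding a smooth quotient $\mathcal M$ of dimension $3|V|-6$. Since Euler's formula for a triangulation of $S^2$ gives $|E|=3|V|-6$, we have $\dim\mathcal M=\dim W$.

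Next, define the angle map $\Phi:\mathcal M\to W$ sending each pattern to its exterior intersection angle function. Smoothness follows from the explicit formula for the exterior intersection angle of two disks in terms of their centers and radii, and the image lies in $W$ because conditions $\mathbf{(c1)}$--$\mathbf{(c4)}$ are necessary for any irreducible realization. By Sard's Theorem (Theorem~\ref{T-6-2}), the set of critical values of $\Phi$ has Lebesgue measure zero in $W$. Over a regular value $\Theta$, the differential $d\Phi$ is an isomorphism at each point of $\Phi^{-1}(\Theta)$ since domain and target are equidimensional, so by the inverse function theorem $\Phi^{-1}(\Theta)$ is a discrete subset of $\mathcal M$.

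To promote discreteness to finiteness, the remaining ingredient is properness of $\Phi$. Given a sequence $\{\mathcal P_n\}\subset\mathcal M$ with $\Phi(\mathcal P_n)\to\Theta\in W$, after a $G$-normalization (e.g.\ sending one disk to a fixed round disk and prescribing one more point) I would argue that no disk collapses or escapes and that no two combinatorially non-adjacent disks can acquire an interior overlap in the limit, so the limit pattern still lies in $\mathcal M$. The inequalities $\mathbf{(c1)}$--$\mathbf{(c4)}$ are precisely the barriers that preclude each such degeneration; this is essentially the boundary analysis used to run the degree argument for Theorem~\ref{T-1-4}. Once $\Phi$ is proper, $\Phi^{-1}(\Theta)$ is both compact and discrete, hence finite.

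The principal obstacle is this last properness step: one must enumerate possible degenerations of irreducible patterns whose angles remain bounded inside $W$ and rule out each one. This amounts to a version of the compactness analysis carried out in the existence proof, but now applied uniformly as the target angle function varies; the structural role of condition $\mathbf{(c2)}$ in controlling the combinatorial type of the contact graph, together with $\mathbf{(c3)}$ and $\mathbf{(c4)}$ in excluding prismatic pinching, is where the technical effort is concentrated.
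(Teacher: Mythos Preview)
Your approach is essentially the paper's: parametrize normalized irreducible patterns by a manifold of dimension $|E|$, apply Sard's Theorem to the angle map to get discrete fibers over almost every value, and invoke the compactness analysis from the existence proof (Lemma~\ref{L-3-4} and Lemma~\ref{L-4-3}) to conclude finiteness. The paper does this by fixing an explicit slice (the spaces $M^\star_{IG}$ and $Z^\star_{G}$ with normalization conditions $\langle\mathbf{x5}\rangle$--$\langle\mathbf{x6}\rangle$ and $\langle\mathbf{y4}\rangle$--$\langle\mathbf{y6}\rangle$) rather than forming the quotient $\mathcal M$ directly, and splits into the two cases $W_m$ (all triangles satisfy $\mathbf{(m5)}$) and $W_g$ (some triangle satisfies $\mathbf{(g5)}$); but the skeleton of the argument is identical.

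One point to correct: you assert that the image of $\Phi$ lies in $W$ because $\mathbf{(c1)}$--$\mathbf{(c4)}$ are necessary for irreducible realizations. The paper does not claim this, and it is not obviously true (these are sufficient, not necessary, conditions). The paper instead defines the angle map into the full cube $(0,\pi)^{|E|}$; Sard's Theorem is then applied there, and one simply intersects the set of regular values with $W$. This costs nothing for the ``almost every'' conclusion. A second minor point: your quotient $\mathcal M$ by the full group of linear and anti-linear fractional maps can have $\mathbb Z/2$ orbifold points at reflection-symmetric patterns, so it is cleaner either to quotient only by the orientation-preserving part (and divide by two at the end) or, as the paper does, to work with an explicit normalization.
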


\begin{remark}\label{R-1-7}
We refer the readers to the works of Zhou~\cite{Zhou} and Bowers-Bowers-Pratt~\cite{Bowers-Bowers-Pratt} for some similar results. We also mention that Zhou~\cite{Zhou}, Xu~\cite{Xu} and Ge-Hua-Zhou~\cite{Ge-Hua-Zhou} established several global rigidity results under the following condition:
\begin{itemize}
\item[$\mathrm{\mathbf{(r1)}}$] If  $e_1,e_2,e_3$ form the boundary of a triangle of $\mathcal T$, then $ I(e_1)+I(e_2)I(e_3)\geq0$, $I(e_2)+I(e_3)I(e_1)\geq0$, $I(e_3)+I(e_1)I(e_2)\geq0$, where $I(e_\mu)=\cos\Theta(e_\mu)$ for $\mu=1,2,3$.
\end{itemize}
\end{remark}

Combining Theorem~\ref{T-1-4} and a rigidity result of Rivin-Hodgson \cite[Corollary 4.6]{Rivin-Hodgson}, we obtain the following generalization of Andreev's Theorem.

\begin{theorem}\label{T-1-8}
Let $P$ be an abstract trivalent polyhedron with more than four faces. Assume that $\Theta:E\to (0,\pi)$ is a function satisfying the  conditions below:
\begin{itemize}
\item[$\mathrm{\mathbf{(s1)}}$] Whenever three distinct edges $e_1,e_2,e_3$  meet at a vertex, then $\sum_{\mu=1}^3\Theta(e_\mu)>\pi$, and
    $\Theta(e_1)+\Theta(e_2)<\Theta(e_3)+\pi$, $\Theta(e_2)+\Theta(e_3)<\Theta(e_1)+\pi$, $\Theta(e_3)+\Theta(e_1)<\Theta(e_2)+\pi$.
\item[$\mathrm{\mathbf{(s2)}}$]Whenever $\Gamma$ is a \textbf{homologically non-adjacent} arc intersecting edges $e_1,e_2$, then $\Theta(e_1)+\Theta(e_2)\leq \pi$, and one of the inequalities is strict if $P$ is the triangular prism.
\item[$\mathrm{\mathbf{(s3)}}$] Whenever $\Gamma$ is a \textbf{prismatic 3-circuit} intersecting edges $e_1,e_2,e_3$, then $\sum_{\mu=1}^3\Theta(e_\mu)<\pi$.
\item[$\mathrm{\mathbf{(s4)}}$] Whenever $\Gamma$ is a \textbf{prismatic 4-circuit} intersecting edges $e_1,e_2,e_3,e_4$, then $\sum_{\mu=1}^4\Theta(e_\mu)<2\pi$.
\end{itemize}
Then there exists a compact convex hyperbolic polyhedron $Q$ combinatorially equivalent to $P$ with dihedral angles given by $\Theta$. Furthermore, $Q$ is unique up to isometries of $\mathbb H^3$.
\end{theorem}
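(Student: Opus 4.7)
The plan is to deduce Theorem~\ref{T-1-8} from Theorem~\ref{T-1-4} combined with the rigidity result \cite[Corollary~4.6]{Rivin-Hodgson}, passing through the standard correspondence between circle patterns on $\hat{\mathbb C}$ and compact convex polyhedra in $\mathbb H^3$ recalled after Theorem~\ref{T-1-1}.

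Set $\mathcal T := P^{\ast}$. Since $P$ is trivalent with more than four faces, $\mathcal T$ is a triangulation of the sphere with more than four vertices, and edges of $P$ correspond bijectively to edges of $\mathcal T$. Under duality, vertices of $P$ correspond to triangles of $\mathcal T$; a prismatic $k$-circuit in $P$ corresponds to a simple closed curve of $k$ edges in $\mathcal T$ that separates the vertices of $\mathcal T$; a homologically non-adjacent arc in $\mathcal T$ is the dual of a homologically non-adjacent arc in $P$; and the triangular prism is dual to the triangular bipyramid. Under this dictionary, the triangle inequalities of $\mathrm{\mathbf{(s1)}}$ become $\mathrm{\mathbf{(c1)}}$, while $\mathrm{\mathbf{(s2)}}$, $\mathrm{\mathbf{(s3)}}$, $\mathrm{\mathbf{(s4)}}$ translate to $\mathrm{\mathbf{(c2)}}$, $\mathrm{\mathbf{(c3)}}$, $\mathrm{\mathbf{(c4)}}$ respectively; the remaining inequality of $\mathrm{\mathbf{(s1)}}$, namely $\Theta(e_1)+\Theta(e_2)+\Theta(e_3)>\pi$ at each vertex of $P$, is set aside for use below.

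Apply Theorem~\ref{T-1-4} to $(\mathcal T, \Theta)$ to obtain an irreducible circle pattern $\mathcal P = \{D_v\}$ on $\hat{\mathbb C}$ with contact graph isomorphic to the $1$-skeleton of $\mathcal T$ and exterior intersection angles given by $\Theta$. Following the Thurston construction, to each disk $D_v$ associate the half-space $H_v^{\ast} \subset \mathbb H^3$ whose boundary at infinity is $\hat{\mathbb C}\setminus \mathring{D}_v$, and set $Q := \bigcap_v H_v^{\ast}$. By construction $Q$ is a convex subset of $\mathbb H^3$; whenever two disks of $\mathcal P$ meet with exterior angle $\theta$, the corresponding bounding planes meet in $\mathbb H^3$ at dihedral angle $\theta$ measured from inside $Q$; and irreducibility of $\mathcal P$, combined with Lemma~\ref{L-3-8}, ensures that $Q$ is nondegenerate with face--edge combinatorics matching that of $P$. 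At each triangle $\tau$ of $\mathcal T$ with edges $e_1,e_2,e_3$, the three bounding planes of the corresponding half-spaces meet at a common point of $\mathbb H^3$ (neither ideal nor hyperideal) precisely because $\Theta(e_1),\Theta(e_2),\Theta(e_3)$ form the angles of a nondegenerate spherical triangle: the triangle inequalities come from $\mathrm{\mathbf{(c1)}}$ and the positive-area inequality $\sum \Theta(e_\mu)>\pi$ is the leftover clause of $\mathrm{\mathbf{(s1)}}$. Hence every vertex of $Q$ lies in $\mathbb H^3$, $Q$ is compact, and it is combinatorially equivalent to $P$ with the prescribed dihedral angles. Uniqueness of $Q$ up to isometries of $\mathbb H^3$ is then immediate from \cite[Corollary~4.6]{Rivin-Hodgson}.

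The main obstacle I expect lies in confirming that the polyhedron $Q$ really has the correct face and edge combinatorics. Specifically, one must verify that irreducibility of $\mathcal P$, together with Lemma~\ref{L-3-8}, prevents the bounding plane of a combinatorially non-adjacent half-space from intersecting the interior of an intended face of $Q$ and thereby destroying the combinatorial equivalence with $P$. Once this is settled, the transition from \emph{the three dihedral angles around a triangle of $\mathcal T$ form a spherical triangle} to \emph{the corresponding vertex of $Q$ sits properly in $\mathbb H^3$} is the standard spherical-link argument, and the application of \cite[Corollary~4.6]{Rivin-Hodgson} for uniqueness is then purely formal.
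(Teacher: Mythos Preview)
Your overall architecture is the same as the paper's: dualize to $\mathcal T=P^{\ast}$, produce an irreducible circle pattern, intersect the associated half-spaces, and quote Rivin--Hodgson for uniqueness. The paper actually invokes Theorem~\ref{T-3-1} rather than Theorem~\ref{T-1-4} (since the first clause of $\mathrm{\mathbf{(s1)}}$ gives condition $\mathrm{\mathbf{(m5)}}$), but that is a minor point.

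The genuine gap is exactly the one you flag, and your proposed tool for closing it is wrong. Lemma~\ref{L-3-8} concerns only the auxiliary non-obtuse weight $\widetilde{\Theta}\in(0,\pi/2)^{|E|}$ and relies on the classical Andreev theorem; it says nothing about the combinatorics of the polyhedron associated to a general $\Theta$ satisfying $\mathrm{\mathbf{(s1)}}$--$\mathrm{\mathbf{(s4)}}$. Irreducibility alone is also not enough: you still need to show that for each face $\mathcal F_i=Q\cap\partial_{\mathbb H^3}H_i$ the only half-spaces that actually cut into $\mathcal F_i$ are the $H_\mu$ with $v_\mu$ adjacent to $v_i$, and that the resulting polygon has exactly the expected number of sides.

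The paper does this in two steps. First, for vertices $v_l,v_j,v_h$ spanning a triangle of $\mathcal T$ and any fourth vertex $v_i$, one shows $D_l\cap D_j\cap D_h\cap D_i=\emptyset$: if $v_i$ is non-adjacent to one of $v_l,v_j,v_h$ this comes from the contact-graph property, and otherwise three of the edges form a separating $3$-cycle so Lemma~\ref{L-2-7} together with $\mathrm{\mathbf{(c3)}}$ applies. This yields a neighborhood of the triple intersection point $q=\partial_{\mathbb H^3}H_l\cap\partial_{\mathbb H^3}H_j\cap\partial_{\mathbb H^3}H_h$ contained in $Q$, so $Q$ has non-empty interior. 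Second, for each face one checks that $\partial_{\mathbb H^3}H_i\subset H_\mu$ whenever $v_\mu$ is non-adjacent to $v_i$ (because $D_i\cap D_\mu=\emptyset$), reducing $\mathcal F_i$ to an intersection over the $m$ neighbors of $v_i$; a direct computation of $\partial\mathcal F_i$ then shows it is a simple closed curve made of $m$ geodesic segments with the correct endpoints. These are the arguments you need in place of the appeal to Lemma~\ref{L-3-8}.
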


\begin{figure}[htbp]\centering
\includegraphics[width=0.54\textwidth]{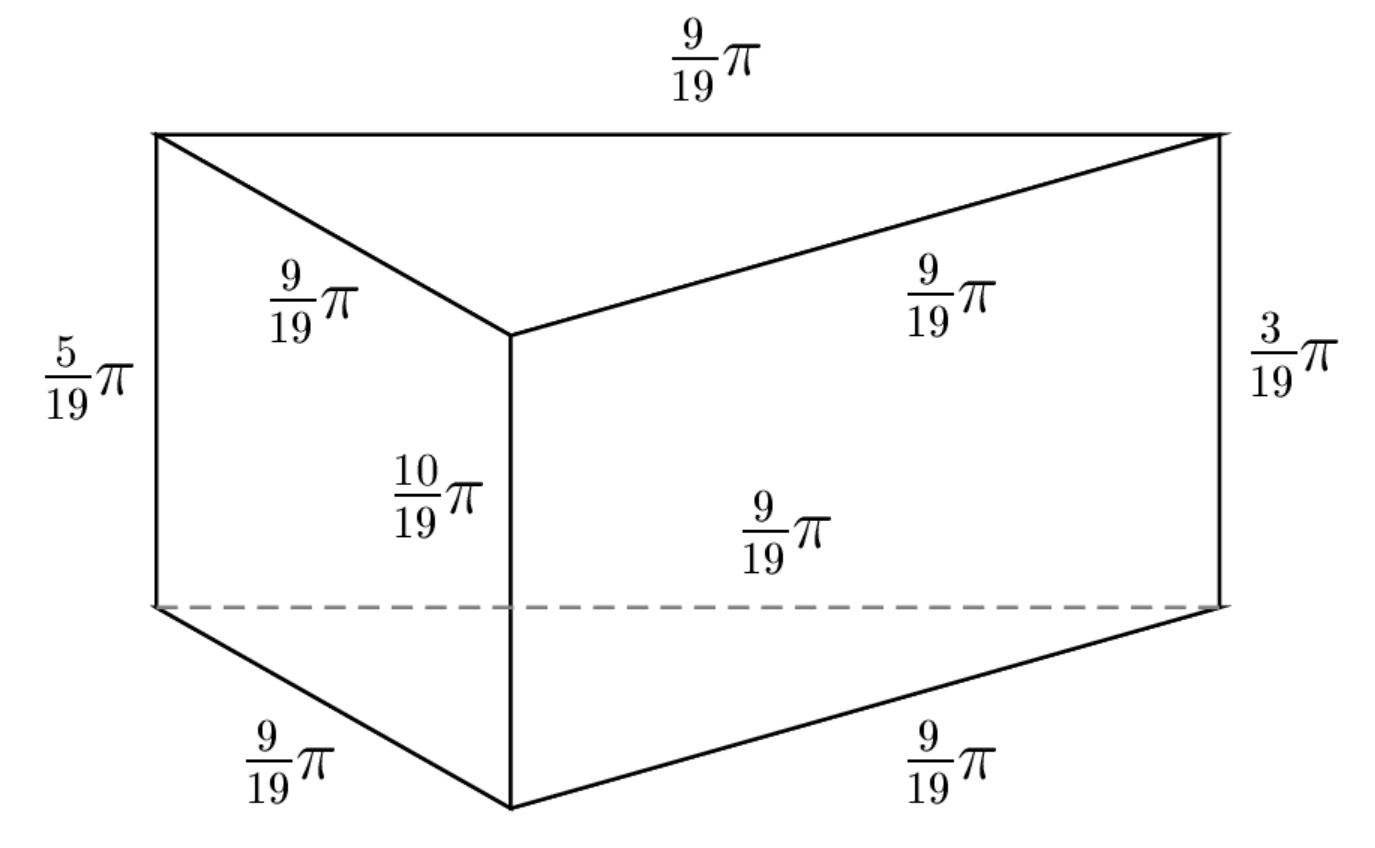}
\caption{A triangular prism having an obtuse dihedral angle}
\end{figure}

\begin{remark}
A more challenging problem is to describe the possible dihedral angles of compact convex hyperbolic polyhedra in a fixed combinatorial class. D\'{\i}az~\cite{Diaz1,Diaz2} and Roeder~\cite{Roeder} illustrate that some non-linear conditions are necessary to avoid those $m$-sided ($m\geq4$) faces degenerating to points or line segments. Under condition of non-obtuse angles, the Gauss-Bonnet Formula provides a useful tool to prevent such degeneracies~\cite{Andreev,Roeder-Hubbard-Dunbar}. Our observation (Lemma~\ref{L-2-6}) indicates that condition $\mathrm{\mathbf{(s2)}}$ is approximately adequate for this purpose as well.
\end{remark}

It is timely to give an outline of the proof of Theorem~\ref{T-1-4}. Our main tools are configurations spaces and topological degree theory. A key point is to deduce that under appropriate conditions a sequence of irreducible circle patterns $\{\mathcal P_n\}$ can produce a limit circle pattern $\mathcal P_\infty$ (see Lemma~\ref{L-3-4}). To attack the problem, we first show $\mathcal P_\infty$ exists in some reasonable compactification and then prove $\mathcal P_\infty$ satisfies the required properties step by step. For instance, using Lemma~\ref{L-2-1} and Lemma~\ref{L-2-5}, we assert that no disk in $\mathcal P_\infty$ degenerates to a point (see Proposition~\ref{P-3-5}); By Lemma~\ref{L-2-6} and Lemma~\ref{L-3-2}, we derive that any two combinatorially non-adjacent disks in $\mathcal P_\infty$ are disjoint. Surprisingly, this approach is parallel with a well-known method in PDE theory, where $\mathcal P_\infty$ plays a similar role to a weak solution, some lemmas in next section play similar roles to prior estimates, and configuration spaces play similar roles to Sobolev spaces.

The paper is organized as follows. In next section, we establish some preliminary results. In Section~\ref{S-3}, we study circle patterns from the viewpoint of spherical geometry and prove
Theorem~\ref{T-1-4} and Theorem~\ref{T-1-6} under an extra assumption. As a consequence, Theorem~\ref{T-1-8} is obtained. In Section~\ref{S-4}, we consider circle patterns from the viewpoint of Euclidean geometry and finish the proof of the main results. In Section~\ref{S-5}, we pose some questions for further developments. The last section contains an appendix regarding to some results from differential topology.

%\noindent
%\textbf{Notational convention}One is a conjecture regarding to a further generalization of Andreev's Theorem.

%Throughout this paper, we use $\chi(\cdot)$ to denote the Euler characteristic of a manifold.

\section{Preliminaries}\label{S-2}
\subsection{A combinatorial fact}
We begin with the following elementary result which unveils some information behind the conditions of Theorem~\ref{T-1-4}.
\begin{lemma}\label{L-2-1}
Suppose $\mathcal T$ possesses more than four vertices. Under conditions $\mathrm{\mathbf{(c1)}},\mathrm{\mathbf{(c2)}},\mathrm{\mathbf{(c3)}},\mathrm{\mathbf{(c4)}}$, if $e_1,e_2,\cdots,e_k$ form a simple closed curve $\Gamma$ which is not the boundary of a triangle of $\mathcal T$, then $\sum_{\mu=1}^k\Theta(e_\mu)\leq(k-2)\pi$. Moreover, if $\Gamma$ is not the boundary of the union of two adjacent triangles, then the strict inequality holds.
\end{lemma}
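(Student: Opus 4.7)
The plan is to argue by strong induction on $k\ge 3$. The inductive step splits into two scenarios: when $\Gamma$ admits an \emph{ear} (a triangle of $\mathcal T$ sharing two consecutive edges with $\Gamma$), condition $(\mathbf{c1})$ gives a length-reducing substitution with a strict loss of $\pi$; when no such ear exists anywhere along $\Gamma$, a uniform per-vertex bound $\Theta(e_{i-1})+\Theta(e_i)\le\pi$ holds by either $(\mathbf{c2})$ or the base $k=3$ case, and summing around $\Gamma$ gives the lemma cheaply whenever $k\ge 5$.

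For the base case $k=3$, neither side of $\Gamma$ is a single triangle of $\mathcal T$, so each is a triangulated disk with three boundary vertices containing an interior vertex by Euler; hence $\Gamma$ separates the vertices of $\mathcal T$ and $(\mathbf{c3})$ yields the strict inequality $\sum\Theta<\pi$. For $k=4$, if both sides of $\Gamma$ contain interior vertices then $(\mathbf{c4})$ gives the strict inequality directly; otherwise Euler's formula forces one side to be the union of two triangles sharing a diagonal $f=v_1v_3$---the ``bounds two adjacent triangles'' case where equality is permitted. I would then examine the opposite would-be diagonal $v_2v_4$: if $v_2v_4\notin E$, the arcs $e_4e_1$ and $e_2e_3$ are homologically non-adjacent, so $(\mathbf{c2})$ twice gives $\sum\Theta\le 2\pi$; if $v_2v_4\in E$ (necessarily in $D_2$), at most one of the two 3-cycles $(e_2,e_3,v_2v_4)$ and $(e_4,e_1,v_2v_4)$ can bound a triangle of $\mathcal T$ (else $|V(\mathcal T)|=4$, contradicting the hypothesis), and combining the base $k=3$ inequality on a non-triangular one with $(\mathbf{c1})$ on the other produces the strict bound $<2\pi$.

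For the inductive step $k\ge 5$, in Scenario A (some triangle $T=v_iv_{i+1}v_{i+2}$ of $\mathcal T$ has edges $e_i,e_{i+1}$ on $\Gamma$) I would form the cycle $\Gamma'=(\Gamma\setminus\{e_i,e_{i+1}\})\cup\{v_iv_{i+2}\}$ of length $k-1\ge 4$, which is simple and cannot bound a triangle; induction gives $\sum_{\Gamma'}\Theta\le(k-3)\pi$, and the strict $(\mathbf{c1})$-inequality $\Theta(e_i)+\Theta(e_{i+1})-\Theta(v_iv_{i+2})<\pi$ sums with it to give $\sum_\Gamma\Theta<(k-2)\pi$. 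In Scenario B (no ear on either side) I argue per vertex: at each $v_i$, either $v_{i-1}v_{i+1}\notin E$, whence the arc $e_{i-1}e_i$ is homologically non-adjacent and $(\mathbf{c2})$ gives $\Theta(e_{i-1})+\Theta(e_i)\le\pi$; or $v_{i-1}v_{i+1}\in E$, in which case the resulting 3-cycle $(e_{i-1},e_i,v_{i-1}v_{i+1})$ cannot bound a triangle (such a triangle would be an ear at $v_i$, contradicting Scenario B), and the base $k=3$ subcase of the lemma delivers $\Theta(e_{i-1})+\Theta(e_i)<\pi$. Summing the $k$ per-vertex estimates counts each $\Theta(e_j)$ exactly twice, yielding $2\sum_\Gamma\Theta\le k\pi$, hence $\sum_\Gamma\Theta\le k\pi/2<(k-2)\pi$ since $k\ge 5$.

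The main obstacle is Scenario B: $(\mathbf{c2})$ by itself does not supply a per-vertex bound at every vertex, because it is silent precisely at those $v_i$ for which the ``short chord'' $v_{i-1}v_{i+1}$ is an edge of $\mathcal T$ and the associated arc is therefore homologically adjacent. The key observation that breaks this obstruction is that the absence of an ear at $v_i$ forces the associated 3-cycle $(e_{i-1},e_i,v_{i-1}v_{i+1})$ to be non-triangular, so that the already proved base $k=3$ subcase of the lemma furnishes the same bound $\Theta(e_{i-1})+\Theta(e_i)<\pi$. Once this per-vertex estimate is available uniformly, the elementary arithmetic $k/2<k-2$ for $k\ge 5$ closes the proof without any further induction on chord-splittings or on the interior combinatorics of $D_1,D_2$.
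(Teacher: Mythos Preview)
Your proof is correct and follows essentially the same inductive scheme as the paper's. The only organizational difference lies in the step $k\ge 5$: the paper's dichotomy is (i) every consecutive pair $e_\mu,e_{\mu+1}$ is homologically non-adjacent versus (ii) some pair is adjacent, and in case (ii) it always shortens $\Gamma$ and inducts, invoking either $(\mathbf{c1})$ or $(\mathbf{c3})$ on the associated 3-cycle according to whether or not it bounds a face. You instead split on the presence of an ear; your Scenario~B thereby absorbs the non-face sub-case of the paper's (ii) and disposes of it by the per-vertex averaging $2\sum_\Gamma\Theta\le k\pi$ rather than by further induction. This is a mild simplification but not a different idea.

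One small imprecision: in the $k=4$ sub-case with $v_2v_4\in E$ you write ``$(\mathbf{c1})$ on the other'' 3-cycle, but the other 3-cycle may also fail to bound a face, in which case the base $k=3$ estimate (not $(\mathbf{c1})$) is what applies. Either inequality yields $\Theta(e_4)+\Theta(e_1)<\pi+\Theta(v_2v_4)$, so the conclusion is unaffected.
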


\begin{proof}
When $k=3$, it is an immediate consequence of condition $\mathrm{\mathbf{(c3)}}$.

When $k\geq4$, we proceed by induction on $k$. For $k=4$, if $\Gamma$ is not the boundary of the union of two adjacent triangles, then $\Gamma$ separates the vertices of $\mathcal T$ and the statement follows from condition $\mathrm{\mathbf{(c4)}}$. If $\Gamma$ is the boundary of the union of two adjacent triangles, we may assume $e_1,e_2$ belong to a  triangle and $e_3,e_4$ belong to another one.

In case that $e_2,e_3$ form a \textbf{homologically non-adjacent} arc, then $e_1,e_4$ also form a \textbf{homologically non-adjacent} arc and the inequality is a result of condition $\mathrm{\mathbf{(c2)}}$. In case that $e_2,e_3$ form a \textbf{homologically adjacent} arc, there exists an edge $e_5$ such that $e_2,e_3,e_5$ form a simple closed curve $\Gamma_1$. Meanwhile, the edges $e_1,e_4,e_5$  form a simple closed curve $\Gamma_2$. Since $\mathcal T$ possesses more than four vertices, we may assume $\Gamma_1$ separates the vertices of $\mathcal T$ without loss of generality. Condition $\mathrm{\mathbf{(c3)}}$ gives
\[
\Theta(e_2)+\Theta(e_3)+\Theta(e_5)\,<\,\pi.
\]
Recall that $e_1,e_4,e_5$ also form a simple closed curve. Using conditions $\mathrm{\mathbf{(c1)}}, \mathrm{\mathbf{(c3)}}$, we get
\[
\Theta(e_1)+\Theta(e_4)\,<\,\pi+\Theta(e_5)\quad\text{or}\quad
\Theta(e_1)+\Theta(e_4)+\Theta(e_5)\,<\,\pi.
\]
No matter which case occurs, it follows that
\[
\sum\nolimits_{\mu=1}^4\Theta(e_\mu)\,<\,2\pi.
\]

Now assume that $k>4$ and the statement holds for all simple closed curves formed by $k-1$ edges. We divide the proof into the following situations:
\begin{itemize}
\item[$(i)$] For $\mu=1,2,\cdots,k$, the edges $e_\mu,e_{\mu+1}$ (set $e_{k+1}=e_1$) always form a \textbf{homologically non-adjacent} arc. Owing to condition $\mathrm{\mathbf{(c2)}}$, we deduce
    \[
    \sum\nolimits_{\mu=1}^k\Theta(e_\mu)\,\leq\,\dfrac{k}{2}\pi\,<\,(k-2)\pi.
    \]
\item[$(ii)$] There exists $\mu_0\in\{1,2,\cdots,k\}$ such that  $e_{\mu_0},e_{\mu_{0}+1}$ form a \textbf{homologically adjacent} arc. Thus we can find an edge $e_{\tau_0}$ such that  $e_{\mu_0},e_{\mu_{0}+1}, e_{\tau_0}$ form a simple closed curve. As before, we have
\[
\Theta(e_{\mu_0})+\Theta(e_{\mu_{0}+1})\,<\,\pi+\Theta(e_{\tau_0})\quad\mathrm{\text{or}}\quad
\Theta(e_{\mu_0})+\Theta(e_{\mu_0+1})+\Theta(e_{\tau_0})\,<\,\pi.
\]
Note that the $k-1$ edges $e_1,\cdots,e_{\mu_{0}-1},e_{\tau_0},e_{\mu_{0}+2},\cdots,e_k$  also form a simple closed curve. By induction,
\[
\sum\nolimits_{\mu=1}^{k}\Theta(e_\mu)-\Theta(e_{\mu_0})-\Theta(e_{\mu_{0}+1})
+\Theta(e_{\tau_0})\,\leq\,(k-3)\pi.
\]
In light of the above two relations, we prove
\[\sum\nolimits_{\mu=1}^{k}\Theta(e_\mu)\,<\,(k-2)\pi.
\]
\end{itemize}
\end{proof}

\subsection{Three-circle configurations}
Below we establish several lemmas on three-circle configurations. It should be pointed out that some special cases of theses results have appeared in the works of Thurston~\cite{Thurston}, Marden-Rodin \cite{Marden-Rodin}, Chow-Luo \cite{Chow-Luo}, and others and have played crucial roles in many proofs of Koebe-Andreev-Thurston Theorem.

\begin{lemma}\label{L-2-2}
Suppose $\Theta_i,\Theta_j,\Theta_k\in(0,\pi)$ are three angles satisfying
\[
\Theta_i+\Theta_j+\Theta_k\,>\,\pi,\;\;
\Theta_i+\Theta_j\,<\,\Theta_k+\pi,\;\;
\Theta_j+\Theta_k\,<\,\Theta_i+\pi,\;\;
\Theta_k+\Theta_i\,<\,\Theta_j+\pi.
\]
For any three numbers $r_i,r_j,r_k\in(0,\pi)$,  there exists a configuration of three intersecting disks in spherical geometry, unique up to isometries, having radii $r_i, r_j, r_k$  and meeting in exterior intersection angles $\Theta_i, \Theta_j, \Theta_k$.
\end{lemma}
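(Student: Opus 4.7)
The plan is to reduce the three-disk configuration problem on $S^2$ to the construction of a spherical triangle of disk centers, whose edge lengths are prescribed by the spherical law of cosines, and then verify that the triangle exists under the given angle conditions.

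Suppose three pairwise intersecting spherical disks have radii $r_\mu$, centers $c_\mu$, and pairwise exterior intersection angles $\Theta_\rho$ (the angle between $D_\mu$ and $D_\nu$, where $\{\mu,\nu,\rho\}=\{i,j,k\}$). Applying the spherical law of cosines to the triangle with vertices $c_\mu, c_\nu$ and any $p \in \partial D_\mu \cap \partial D_\nu$ gives
\[
\cos d(c_\mu,c_\nu) \,=\, \cos r_\mu \cos r_\nu - \sin r_\mu \sin r_\nu \cos\Theta_\rho.
\]
I take this formula as the \emph{definition} of the candidate pairwise center distances $d_{\mu\nu}$. Since $r_\mu\in(0,\pi)$ and $\Theta_\rho\in(0,\pi)$, the elementary estimate $\cos(r_\mu+r_\nu)<\cos d_{\mu\nu}<\cos(r_\mu-r_\nu)$ yields $d_{\mu\nu}\in(0,\pi)$.

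Next, realizing these three distances by a non-degenerate spherical triangle on $S^2$ is equivalent to positive-definiteness of the Gram matrix $G=(\cos d_{\mu\nu})$ of the (unknown) center unit vectors. Since the $2\times2$ principal minors $\sin^2 d_{\mu\nu}$ are positive, this reduces to $\det G>0$, and a direct expansion gives the identity
\[
\det G \,=\, \sin^2 d_{ik}\sin^2 d_{jk} - (\cos d_{ij}-\cos d_{ik}\cos d_{jk})^2,
\]
which recasts $\det G>0$ as the strict spherical triangle inequalities for the triple $d_{ij},d_{ik},d_{jk}$. I verify these in two stages. At the distinguished point $r_i=r_j=r_k=\pi/2$, the formula collapses to $d_{\mu\nu}=\pi-\Theta_\rho$, and the triangle inequalities plus perimeter bound translate exactly into $\Theta_\mu+\Theta_\nu<\Theta_\rho+\pi$ and $\Theta_i+\Theta_j+\Theta_k>\pi$, which are the four hypotheses. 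For general $(r_i,r_j,r_k)\in(0,\pi)^3$, I extend by a continuity/connectivity argument: $\det G$ is continuous in the radii, strictly positive at $(\pi/2,\pi/2,\pi/2)$, and combining the cosine formula with each of the four angle hypotheses rules out $\det G=0$ throughout the open cube.

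Given the triangle of centers $c_\mu \in S^2$, I place the disks $D_\mu := D(c_\mu,r_\mu)$; the cosine rule, read backwards, confirms their pairwise exterior intersection angles equal the prescribed $\Theta_\rho$. Uniqueness up to isometries of $S^2$ follows because the spherical triangle of centers is determined up to isometry by its side lengths, and disks are determined by center and radius. The main obstacle I expect is the non-vanishing step — showing $\det G>0$ uniformly in $(r_i,r_j,r_k)\in(0,\pi)^3$. The hypotheses are sharp at the reference radii $r=(\pi/2,\pi/2,\pi/2)$, and propagating strict inequality to arbitrary radii is elementary but delicate, as it requires using each of the four angle bounds to exclude the various degenerations where the triangle inequalities or the perimeter bound could become tight.
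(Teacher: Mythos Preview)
Your setup is right and in fact identical to the paper's: both reduce to showing that the quantity
\[
\det G \;=\; 1-\cos^2 l_i-\cos^2 l_j-\cos^2 l_k+2\cos l_i\cos l_j\cos l_k
\;=\;\sin^2 l_j\sin^2 l_k-(\cos l_i-\cos l_j\cos l_k)^2
\]
is strictly positive for all $(r_i,r_j,r_k)\in(0,\pi)^3$, where $l_\mu$ are the candidate center distances. The check at $r=(\pi/2,\pi/2,\pi/2)$ is correct and the Gram-matrix reformulation is clean.

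The gap is exactly where you flag it: the non-vanishing step. You assert that ``combining the cosine formula with each of the four angle hypotheses rules out $\det G=0$ throughout the open cube,'' but you give no mechanism for this, and it is not a soft consequence of the hypotheses. Concretely, $\det G=0$ means one of $l_i=l_j+l_k$, $l_j=l_k+l_i$, $l_k=l_i+l_j$, or $l_i+l_j+l_k=2\pi$ holds; to close the continuity argument you would need to show that each of these equalities, for \emph{some} triple of radii, forces a violation of one of the four angle inequalities. This is not obvious --- the four hypotheses are sharp only at the reference radii, and at generic radii the correspondence between degenerations of the center triangle and failures of the angle bounds is scrambled in a nonlinear way. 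Without this implication, connectivity of the cube buys you nothing.

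The paper does not attempt a continuity argument. Instead it substitutes the cosine formulas directly into $\det G$ and exploits the fact that the hypotheses on $\Theta_i,\Theta_j,\Theta_k$ say precisely that these are the interior angles of a spherical triangle, with opposite side lengths $\phi_i,\phi_j,\phi_k$. Using the second spherical law of cosines $\cos\Theta_i+\cos\Theta_j\cos\Theta_k=\cos\phi_i\sin\Theta_j\sin\Theta_k$ to rewrite the cross terms, the resulting expression becomes a quadratic form in auxiliary variables that can be shown positive by two successive completions of the square. This yields $\det G>0$ for \emph{all} radii at once, with no appeal to connectedness. If you want to salvage your approach you would need to supply the missing degeneration analysis; otherwise the direct algebraic route is what actually carries the lemma.
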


\begin{figure}[htbp]\centering
\includegraphics[width=0.52\textwidth]{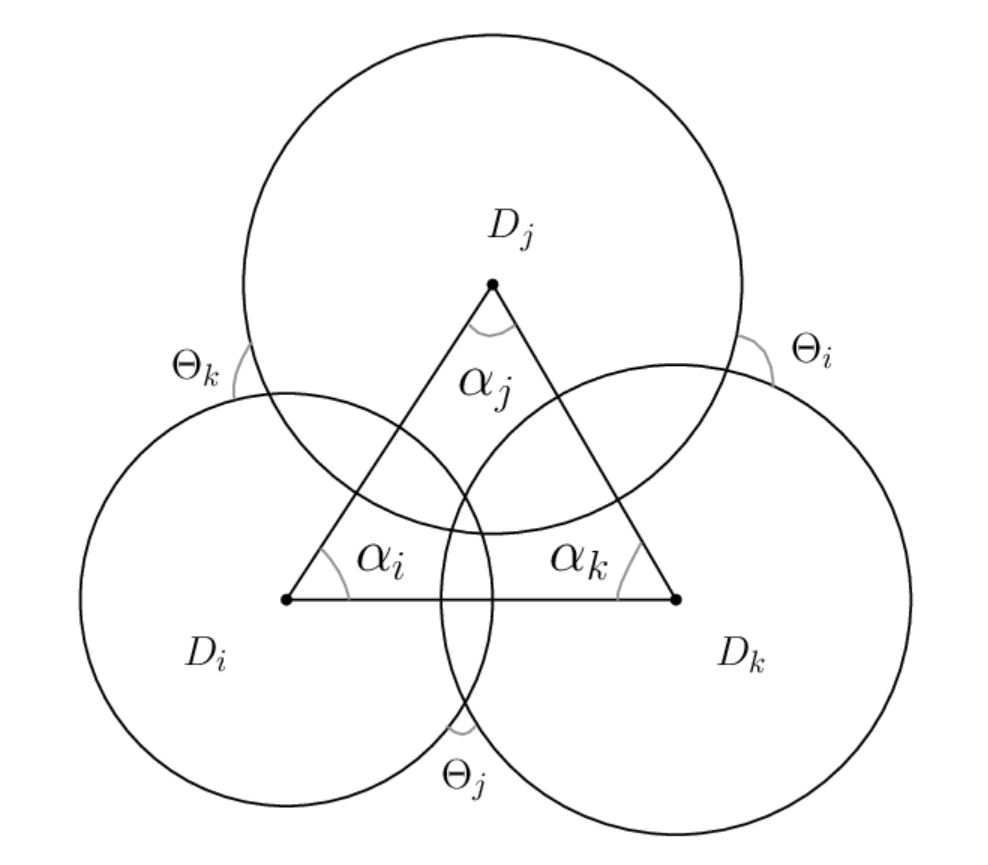}
\caption{A three-circle configuration}\label{F-3}
\end{figure}

\begin{proof}
Set
\[
l_i\, =\, \arccos(\cos r_{j}\cos r_{k}-\cos\Theta_{i}\sin r_{j}\sin r_{k})
\]
and $l_j,l_k$ similarly. We prove the lemma by showing the following  inequalities:
\[
l_i+l_j\,>\,l_k,\quad l_j+l_k\,>\,l_i,\quad l_k+l_i\,>\,l_j, \quad l_i+l_j+l_k\,<\,2\pi.
\]
Equivalently, let us verify
\[
\sin\dfrac{l_i+l_j+l_k}{2}\sin\dfrac{l_i+l_j-l_k}{2}\sin\dfrac{l_j+l_k-l_i}{2}\sin\dfrac{l_k+l_i-l_j}{2}>0.
\]
To simplify notations, for $\mu=i,j,k$, set $a_\mu=\cos r_\mu,$   $x_\mu=\sin r_\mu$. Then
\[
\cos l_i\,=\,a_{j}a_{k}-\cos\Theta_ix_{j}x_{k}.
\]
Notice that
\[
\begin{aligned}
&\,\sin\dfrac{l_i+l_j+l_k}{2}\sin\dfrac{l_i+l_j-l_k}{2}
\sin\dfrac{l_j+l_k-l_i}{2}\sin\dfrac{l_k+l_i-l_j}{2} \\
\,=&\,\dfrac{\sin^2 l_i\sin^2 l_j-(\cos l_i\cos l_j-\cos l_k)^2}{4}.
\end{aligned}
\]
Putting the above relations together, we need to demonstrate
\begin{equation}\label{E-2-1}
\begin{aligned}
&\sin^2\Theta_ix_j^2x^2_k+\sin^2\Theta_jx^2_k x_i^2+\sin^2\Theta_kx^2_ix^2_j-(2+2\cos\Theta_i\cos\Theta_j\cos\Theta_k)x_i^2x_j^2x_k^2\\
\,&\;\;+2\lambda_{ijk}a_ja_kx_jx_kx_i^2+2\lambda_{jki}a_ka_ix_kx_ix_j^2+2\lambda_{kij}a_ia_jx_ix_jx_k^2\,>\,0,
\end{aligned}
\end{equation}
where
\[
\lambda_{ijk}\,=\,\cos\Theta_i+\cos\Theta_j\cos\Theta_k.
\]
Since $a^2_\mu+x^2_\mu=1$, an equivalent form of~\eqref{E-2-1} is
\begin{equation}\label{E-2-2}
\begin{split}
&\sin^2\Theta_ia_i^2x_j^2x^2_k+\sin^2\Theta_ja_j^2x^2_k x_i^2+\sin^2\Theta_ka_k^2x^2_ix^2_j
+\zeta_{ijk}x_i^2x_j^2x_k^2\\
&\quad+2\lambda_{ijk}a_ja_kx_jx_kx_i^2+2\lambda_{jki}a_ka_ix_kx_ix_j^2+2\lambda_{kij}a_ia_jx_ix_jx_k^2\;>\;0,
\end{split}
\end{equation}
where
\[
\zeta_{ijk}\,=\,
\sin^2\Theta_i+\sin^2\Theta_j+\sin^2\Theta_k-(2+2\cos\Theta_i\cos\Theta_j\cos\Theta_k).
\]
Observe that $\Theta_i,\Theta_j,\Theta_k$ form the inner angles of
a spherical triangle. Let $\phi_i,\phi_j,\phi_k$ be the lengths of sides opposite to $\Theta_i,\Theta_j,\Theta_k$, respectively. The second spherical law of cosines gives
\begin{equation}\label{E-2-3}
\lambda_{ijk}\,=\,\cos\Theta_i+\cos\Theta_j\cos\Theta_k\,=\, \cos\phi_i\sin\Theta_j\sin\Theta_k.
\end{equation}
Set
\[
s_{i}\,=\,\sin\Theta_ia_ix_jx_k,\quad s_{j}\,=\,\sin\Theta_ja_jx_kx_i,\quad   s_{k}\,=\,\sin\Theta_ka_kx_ix_j.
\]
Inserting~\eqref{E-2-3} into~\eqref{E-2-2}, we reduce the proof to showing
\[
s_{i}^2+s_{j}^2+s_{k}^2+2\cos \phi_is_{j}s_{k}+2\cos \phi_j s_{k}s_{i}+2\cos \phi_ks_{i}s_{j}+\zeta_{ijk}x_i^2x_j^2x_k^2\,>\,0.
\]
For this purpose, completing the square gives
\[
\begin{aligned}
&\, s_{i}^2+s_{j}^2+s_{k}^2+2\cos \phi_is_{j}s_{k}+
2\cos \phi_j s_{k}s_{i}+2\cos \phi_ks_{i}s_{j}\\
=\,&\,(s_{i}+\cos \phi_js_{k}+\cos\phi_ks_{j})^2
+\sin^2\phi_js^2_{k}+\sin^2\phi_ks^2_{j}
+2(\cos\phi_i-\cos\phi_j\cos\phi_k)s_{j}s_{k}\\
\geq\, &\, \sin^2\phi_js^2_{k}+\sin^2\phi_ks^2_{j}+
2(\cos\phi_i-\cos\phi_j\cos\phi_k)s_{j}s_{k}.
\end{aligned}
\]
Using the spherical law of cosines, we obtain
\[
\cos\phi_i-\cos\phi_j\cos\phi_k\,=\,\cos\Theta_i\sin\phi_j\sin\phi_k.
\]
It follows that
\[
\begin{aligned}
&\, s_{i}^2+s_{j}^2+s_{k}^2+2\cos \phi_is_{j}s_{k}+
2\cos \phi_j s_{k}s_{i}+2\cos \phi_ks_{i}s_{j}\\
\geq &\,\sin^2\phi_js^2_{k}+\sin^2\phi_ks^2_{j}+
2\cos\Theta_i\sin\phi_j\sin\phi_ks_{j}s_{k}\\
=&\,(\sin\phi_js_{k}+\cos\Theta_i\sin\phi_ks_{j})^2
+\sin^2\Theta_i\sin^2\phi_ks^2_{j}\\
\geq &\,\sin^2\phi_k\sin^2\Theta_i\sin^2\Theta_ja_j^2x_k^2x_i^2,
\end{aligned}
\]
Meanwhile, a routine computation yields
\[
\begin{aligned}
\zeta_{ijk}\,&=\,\sin^2\Theta_i+\sin^2\Theta_j+\sin^2\Theta_k-(2+2\cos\Theta_i\cos\Theta_j\cos\Theta_k)\\
         \,&=\,\sin^2\Theta_i\sin^2\Theta_j-(\cos\Theta_k+\cos\Theta_i\cos\Theta_j)^2\\
        \,&=\,\sin^2\Theta_i\sin^2\Theta_j-\cos^2\phi_k\sin^2\Theta_i\sin^2\Theta_j\\
         \,&=\,\sin^2\phi_k\sin^2\Theta_i\sin^2\Theta_j.\\
\end{aligned}
\]
As a result,
\[
\begin{aligned}
\,&s_{i}^2+s_{j}^2+s_{k}^2+2\cos \phi_is_{j}s_{k}+2\cos \phi_j s_{k}s_{i}+2\cos \phi_ks_{i}s_{j}+\zeta_{ijk}x_i^2x_j^2x_k^2\\
\,\geq\,&\sin^2\phi_k\sin^2\Theta_i \sin^2\Theta_j(a_j^2x_k^2x_i^2+x_j^2x_k^2 x_i^2)\\
%\,=\,&\sin^2\phi_k\sin^2\Theta_i \sin^2\Theta_jx_k^2x_i^2\\
\,>\,&0.
\end{aligned}
\]
We thus finish the proof.
\end{proof}

\begin{lemma}\label{L-2-3}
For any $r_i,r_j,r_k,\Theta_i,\Theta_j,\Theta_k\in(0,\pi)$, define $l_i,l_j,l_k$  as above. If $\Theta_i+\Theta_j+\Theta_k=\pi$, then
\[
l_i+l_j\,>\,l_k,\quad l_j+l_k\,>\,l_i,\quad l_k+l_i\,>\,l_j, \quad l_i+l_j+l_k\,\leq\,2\pi.
\]
Moreover, when $r_i+r_j+r_k<\pi$, the last inequality is strict.
\end{lemma}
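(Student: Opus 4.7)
The plan is to follow the algebraic blueprint of Lemma~\ref{L-2-2}, taking advantage of the simplifications produced by the boundary hypothesis $\Theta_i+\Theta_j+\Theta_k=\pi$. Under this assumption the identity $\sin^2\Theta_i+\sin^2\Theta_j+\sin^2\Theta_k=2+2\cos\Theta_i\cos\Theta_j\cos\Theta_k$ forces $\zeta_{ijk}=0$, while $\cos\Theta_k=-\cos(\Theta_i+\Theta_j)$ reduces each coefficient $\lambda$ to $\lambda_{ijk}=\sin\Theta_j\sin\Theta_k$. Substituting these into~\eqref{E-2-2}, the polynomial there collapses to a perfect square and yields the identity
\[
\sin^2l_i\sin^2l_j-(\cos l_i\cos l_j-\cos l_k)^2=(s_i+s_j+s_k)^2,
\]
where $s_\mu=\sin\Theta_\mu\cos r_\mu\sin r_\nu\sin r_\rho$ (indices cyclic).

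Introducing the half-angle sines $H_0=\sin\frac{l_i+l_j+l_k}{2}$ and $H_\mu=\sin\frac{l_\nu+l_\rho-l_\mu}{2}$ for $\mu\in\{i,j,k\}$, the identity above reads $4H_0H_iH_jH_k=(s_i+s_j+s_k)^2\ge0$. Product-to-sum expansion gives $2H_0H_k=\sin l_i\sin l_j-(\cos l_i\cos l_j-\cos l_k)$ and $2H_iH_j=\sin l_i\sin l_j+(\cos l_i\cos l_j-\cos l_k)$, both of which are non-negative by the perfect-square identity. Cycling indices shows all six pairwise products $H_\mu H_\nu\ge0$, forcing the four $H$-values to share a common sign. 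Evaluating at a convenient base point (for instance $r_i=r_j=r_k$ small, where all $H$'s are strictly positive) and invoking connectedness of the parameter space, I conclude $H_0,H_i,H_j,H_k\ge0$ throughout, giving the four inequalities in non-strict form.

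The main obstacle is upgrading the three triangle inequalities to strict ones. Suppose for contradiction $H_k=0$, i.e., $l_i+l_j=l_k$. Substituting $\cos l_i=\cos(l_k-l_j)$ into the remaining product-to-sum identities yields $2H_0H_i=2\sin l_j\sin l_k$ and $2H_0H_j=2\sin l_i\sin l_k$, so $H_0,H_i,H_j>0$; the identity of the first paragraph then forces $G:=s_i+s_j+s_k=0$. The crucial remaining step is to show that $G=0$ in our setting always corresponds to the degeneracy $H_0=0$ (rather than any triangle-equality degeneracy). At the reference point $r_i=r_j=r_k=\pi/2$ one computes $G=0$ and $l_\mu=\pi-\Theta_\mu$, so $H_0=\sin\pi=0$ while $H_i,H_j,H_k=\sin\Theta_\mu>0$; a local analysis shows $H_0\equiv0$ on a neighborhood within $\{G=0\}$, and connectedness of this $4$-dimensional zero-variety propagates $H_0\equiv0$ globally on $\{G=0\}$. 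Combining the resulting $l_i+l_j+l_k=2\pi$ with $l_i+l_j=l_k$ gives $l_k=\pi$, contradicting the strict bound $\cos l_k>\cos(r_i+r_j)\ge-1$, which holds because $\cos\Theta_k<1$.

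For the final strict sum inequality under $r_i+r_j+r_k<\pi$: since $H_i,H_j,H_k>0$ by the previous step, the product identity yields $H_0=0$ iff $G=0$, so it suffices to prove $G>0$ whenever $\sum_\mu r_\mu<\pi$. Factoring $G=\sin r_i\sin r_j\sin r_k\cdot\sum_\mu\sin\Theta_\mu\cot r_\mu$, the constraint forces at most one $r_\mu\ge\pi/2$; a direct trigonometric estimate using $\sum\Theta_\mu=\pi$ together with the elementary inequality $\sin\Theta_\nu+\sin\Theta_\rho>\sin\Theta_\mu$ (valid for the angles of any triangle) shows $\sum_\mu\sin\Theta_\mu\cot r_\mu>0$, finishing the proof.
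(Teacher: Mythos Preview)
Your algebraic approach is essentially correct, but it diverges sharply from the paper's route and leaves a couple of steps thinner than they should be.

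The paper's proof is short and geometric. For the strict triangle inequalities it builds three little spherical triangles $\triangle_{ij},\triangle_{jk},\triangle_{ki}$ (each with a vertex at an intersection point of two boundary circles) and glues them around that vertex; the hypothesis $\Theta_i+\Theta_j+\Theta_k=\pi$ makes the cone angle exactly $2\pi$, so the glued object is an honest spherical triangle with side lengths $l_i,l_j,l_k$, and the strict triangle inequalities follow immediately. The non-strict bound $l_i+l_j+l_k\le2\pi$ is obtained by letting $\Theta_i+\Theta_j+\Theta_k\searrow\pi$ in Lemma~\ref{L-2-2}, and the strict bound under $r_i+r_j+r_k<\pi$ is just the one-line estimate $l_\mu<r_\nu+r_\rho$ (from $\cos\Theta_\mu<1$), summed.

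Your route stays entirely algebraic: the perfect-square collapse $4H_0H_iH_jH_k=(s_i+s_j+s_k)^2$ is a nice observation, and the sign-plus-connectedness argument for the non-strict inequalities is valid once one notes that at most one $H_\mu$ can vanish at any parameter value (since $l_\mu\in(0,\pi)$ strictly). That ``at most one'' fact is also what makes your propagation step work: it is exactly what guarantees that $\{H_0=0\}$ is open (not just closed) in $\{G=0\}$, and you use it implicitly without stating it. You do supply the needed ingredient $l_k<\pi$ at the very end, but it should be isolated earlier so the open-closed argument is airtight. Similarly, the connectedness of $\{G=0\}$ deserves one sentence (it is a graph of $r_k$ over the remaining parameters, since $\cot$ is a bijection onto $\mathbb R$). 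Finally, your ``direct trigonometric estimate'' for $\sum_\mu\sin\Theta_\mu\cot r_\mu>0$ under $\sum r_\mu<\pi$ does go through (at most one $r_\mu\ge\pi/2$; then $\cot r_i,\cot r_j>\cot(r_i+r_j)>-\cot r_k$, combined with $\sin\Theta_i+\sin\Theta_j>\sin\Theta_k$), but this is several lines of work, whereas the paper's bound $l_i+l_j+l_k<2(r_i+r_j+r_k)$ is immediate. In short: your argument is sound and self-contained, but the paper's geometric gluing plus the trivial radius bound is considerably cleaner.
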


\begin{figure}[htbp]\centering
\includegraphics[width=0.46\textwidth]{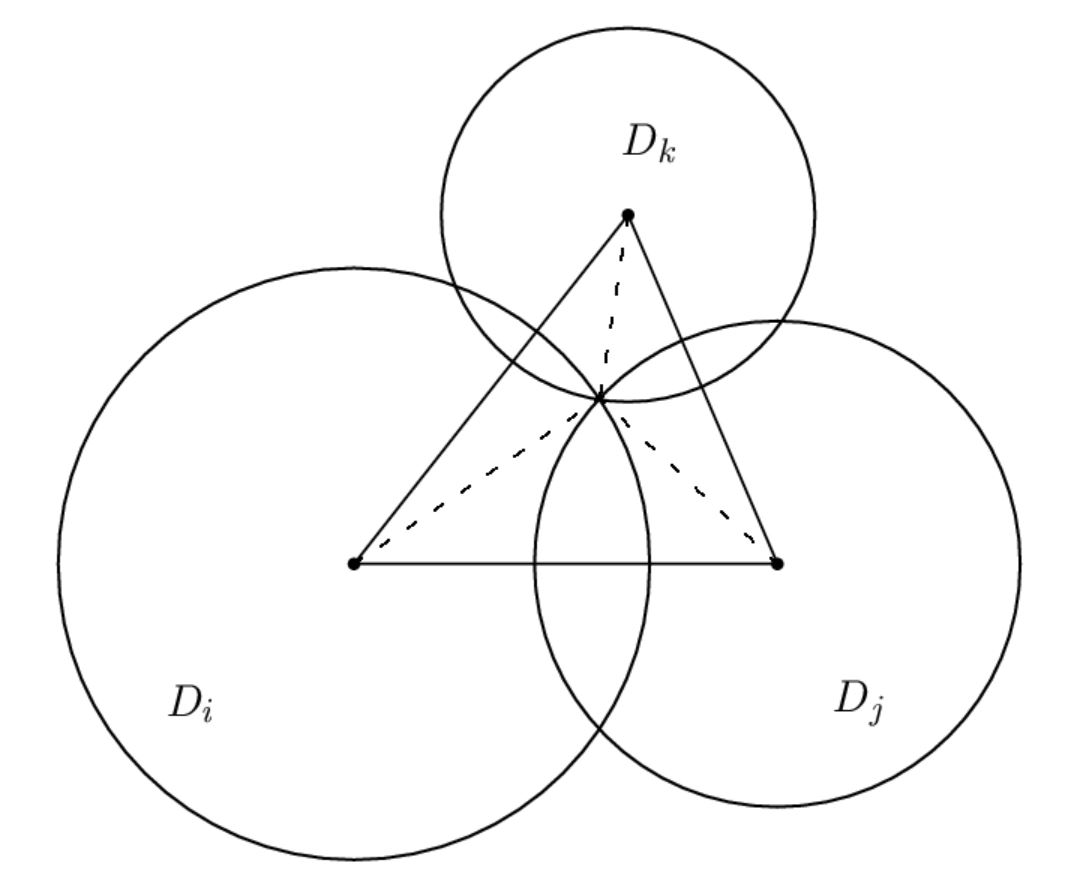}
\caption{A three-circle configuration via gluing process}\label{F-4}
\end{figure}

\begin{proof}
For  $r_i,r_j,\Theta_k\in(0,\pi)$, there are two intersecting disks $D_i,D_j$ in spherical geometry having radii $r_i,r_j$ and meeting in exterior intersection angle $\Theta_k$. As in Figure~\ref{F-4}, connecting the centers of $D_i,D_j$ and one of intersection points of their boundaries by geodesic segments, we obtain a spherical triangle $\triangle_{ij}$. Similarly, we  construct triangles $\triangle_{jk},\triangle_{ki}$.

Gluing $\triangle_{ij},\triangle_{jk},\triangle_{ki}$ along the corresponding edges produces a spherical triangle $\triangle_{ijk}$ having possibly a cone type singularity. Precisely, the cone angle is equal to
\[
\pi-\Theta_k+\pi-\Theta_i+\pi-\Theta_j\,=\,2\pi,
\]
which indicates the singularity actually vanishes.

Note that $\triangle_{ijk}$ has sides of lengths $l_i,l_j,l_k$. The triangle inequality of the spherical metric space then implies the former three inequalities of the lemma. Meanwhile, the last inequality follows from a limit argument
of Lemma~\ref{L-2-2}.

Finally, when $r_i+r_j+r_k<\pi$, it is easy to see
\[
l_i+l_j+l_k\,<\,2(r_i+r_j+r_k)\,<\,2\pi.
\]
\end{proof}

In Euclidean geometry, we have the following result.
\begin{lemma}\label{L-2-4}
Suppose $\Theta_i, \Theta_j, \Theta_k \in [0,\pi) $ are three angles satisfying
\[
\Theta_i+\Theta_j+\Theta_k\,\leq\,\pi
\]
or
\[
\Theta_i+\Theta_j\,<\,\Theta_k+\pi,\;\;
\Theta_j+\Theta_k\,<\,\Theta_i+\pi,\;\;
\Theta_k+\Theta_i\,<\,\Theta_j+\pi.
\]
For any three positive numbers $r_i,r_j,r_k$, there exists a configuration of three intersecting disks in Euclidean geometry, unique up to isometries, having radii $r_i,r_j,r_k$  and meeting in exterior intersection angles $\Theta_i,\Theta_j,\Theta_k$.
\end{lemma}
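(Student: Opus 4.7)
My plan is to translate the approach of Lemmas~\ref{L-2-2} and~\ref{L-2-3} to Euclidean geometry. For each cyclic permutation $(\mu,\nu,\lambda)$ of $(i,j,k)$, set $l_\mu := \sqrt{r_\nu^2 + r_\lambda^2 + 2 r_\nu r_\lambda \cos\Theta_\mu}$, which by the Euclidean law of cosines is the distance between the centers of two disks of radii $r_\nu, r_\lambda$ meeting at exterior angle $\Theta_\mu$ (the angle between the radii at their intersection point is $\pi-\Theta_\mu$). A configuration with the prescribed data exists, and is unique up to isometry by SSS, precisely when $l_i, l_j, l_k$ form the sides of a non-degenerate Euclidean triangle $v_i v_j v_k$: disks of radii $r_\mu$ placed at $v_\mu$ then furnish the configuration. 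So the proof reduces to showing
$$\Delta \,:=\, (l_i+l_j+l_k)(-l_i+l_j+l_k)(l_i-l_j+l_k)(l_i+l_j-l_k) \,>\, 0,$$
which (since the first factor is positive and at most one of the other three can be nonpositive, as any pair of them sums to $2l_\mu>0$) is equivalent to the strict triangle inequalities for $l_i, l_j, l_k$.

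A direct expansion using $l_\mu^2 = r_\nu^2 + r_\lambda^2 + 2 r_\nu r_\lambda \cos\Theta_\mu$ yields the key identity
$$\Delta \,=\, 4\sum_\mu (r_\nu r_\lambda \sin\Theta_\mu)^2 \,+\, 8 r_i r_j r_k \sum_\mu r_\mu \lambda_\mu,\qquad \lambda_\mu := \cos\Theta_\mu + \cos\Theta_\nu \cos\Theta_\lambda,$$
with $\lambda_\mu$ the same quantity appearing in Lemma~\ref{L-2-2}. I verify $\Delta>0$ under each hypothesis.

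In the first case, from $\Theta_\nu + \Theta_\lambda \le \pi - \Theta_\mu$ I get $\cos(\Theta_\nu+\Theta_\lambda) \ge -\cos\Theta_\mu$, i.e., $\lambda_\mu \ge \sin\Theta_\nu \sin\Theta_\lambda \ge 0$. Both terms of $\Delta$ are therefore nonnegative, and the first is strictly positive unless every $\sin\Theta_\mu$ vanishes, which forces $\Theta_\mu\equiv 0$; but then $\lambda_\mu\equiv 2$, so the second term is strictly positive. In the second case, the angle triangle inequalities combined with $\sum\Theta_\mu>\pi$ (the case $\le\pi$ is already handled) force all $\Theta_\mu\in(0,\pi)$, so $\Theta_i,\Theta_j,\Theta_k$ are the inner angles of a non-degenerate spherical triangle with sides $\phi_\mu\in(0,\pi)$. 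By the second spherical law of cosines \eqref{E-2-3}, $\lambda_\mu=\cos\phi_\mu\sin\Theta_\nu\sin\Theta_\lambda$; setting $X_\mu := r_\nu r_\lambda \sin\Theta_\mu > 0$ the identity rearranges to
$$\Delta \,=\, 4\bigl[X_i^2+X_j^2+X_k^2+2(X_jX_k\cos\phi_i+X_kX_i\cos\phi_j+X_iX_j\cos\phi_k)\bigr] \,=\, 4\,|X_i\vec u_i+X_j\vec u_j+X_k\vec u_k|^2,$$
for unit vectors $\vec u_\mu\in\mathbb{R}^3$ with $\vec u_\nu\cdot\vec u_\lambda=\cos\phi_\mu$; these vectors exist and are linearly independent because the Gram matrix of a non-degenerate spherical triangle is positive definite.

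The main obstacle I anticipate is establishing the key algebraic identity expressing $\Delta$ in the $r_\mu/\lambda_\mu$ form; once that expansion is in hand, the subsequent case analysis follows cleanly. Uniqueness up to Euclidean isometry is then immediate from SSS congruence applied to the triangle $v_iv_jv_k$.
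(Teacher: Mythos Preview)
Your proof is correct and follows essentially the same route as the paper: reduce to the triangle inequalities for the center distances $l_\mu$, expand to obtain the same quadratic expression in the $r_\mu$ with coefficients $\sin^2\Theta_\mu$ and $\lambda_\mu$ (the paper's inequality~\eqref{E-2-4} is exactly your $\Delta/4>0$), then handle the two cases by showing $\lambda_\mu\ge 0$ when $\sum\Theta_\mu\le\pi$ and invoking the spherical law of cosines~\eqref{E-2-3} otherwise. The only cosmetic difference is that in the second case the paper establishes positivity of the quadratic form by completing the square, whereas you recognize it as the squared length of a positive combination of the linearly independent vertex vectors of the spherical triangle; both arguments are equivalent.
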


\begin{proof}
The lemma has actually appeared in the work of Jiang-Luo-Zhou~\cite{Jiang-Luo-Zhou}. Here we give an independent proof for the sake of completeness. Set
\[
l_i\,=\,\sqrt{r_j^2+r_k^2+2\cos\Theta_ir_jr_k}
\]
and $l_j,l_k$ similarly. It suffices to verify that $l_i,l_j,l_k$ satisfy the triangle inequalities. Without loss of generality, let us demonstrate
\[
|l_i-l_j|\,<\,l_k.
\]
Namely,
\[
\Big|\sqrt{r_j^2+r_k^2+2\cos\Theta_ir_jr_k}-\sqrt{r_k^2+r_i^2+2\cos\Theta_jr_kr_i} \,\Big|\,<\,\sqrt{r_i^2+r_j^2+2\cos\Theta_kr_ir_j}.
\]
Simplifying the above inequality, we need to show
\begin{equation}\label{E-2-4}
\sin^2\Theta_ir_j^2r_k^2+\sin^2\Theta_jr_k^2 r_i^2+\sin^2 \Theta_kr^2_ir^2_j
+2\lambda_{ijk}r_jr_k r_i^2+2\lambda_{jki}r_kr_i r_j^2+2\lambda_{kij} r_ir_j r_k^2\,>\,0.
\end{equation}

Now we divide the proof into the following situations:
\begin{itemize}
\item[$(i)$] $\Theta_i+\Theta_j+\Theta_k\leq \pi$. Then
\[
\begin{aligned}
\lambda_{ijk}\,&=\,\cos\Theta_i+\cos(\Theta_j+\Theta_k)+\sin\Theta_j\sin\Theta_k\\
&\geq\,2\cos\dfrac{\Theta_i+\Theta_j+\Theta_k}{2}\cos\dfrac{\Theta_i-\Theta_j-\Theta_k}{2}\\
&\geq\,0.
\end{aligned}
\]
Similarly,
\[
\lambda_{jki}\,\geq\, 0,\;\;\lambda_{kij}\,\geq\, 0.
\]
It is easy to see~\eqref{E-2-4} holds.

\item[$(ii)$] $\Theta_i+\Theta_j+\Theta_k >\pi$ and
    $\Theta_i+\Theta_j<\pi+\Theta_k,$ $\Theta_j+\Theta_k<\pi+\Theta_i,$ $\Theta_k+\Theta_i<\pi+\Theta_j$. As before, there exists a spherical triangle with inner angles $\Theta_i,\Theta_j,\Theta_k$. Suppose $\phi_i,\phi_j,\phi_k$ are the lengths of sides opposite to $\Theta_i,\Theta_j,\Theta_k$, respectively.
Set
\[
y_{i}\,=\,\sin\Theta_ir_jr_k,\quad y_{j}\,=\,\sin\Theta_jr_kr_i, \quad y_{k}\,=\,\sin\Theta_kr_ir_j.
\]
By~\eqref{E-2-3}, inequality~\eqref{E-2-4} is reduced to
\[
y_{i}^2+y_{j}^2+y_{k}^2+2\cos \phi_iy_{j}y_{k}+2\cos \phi_j y_{k}y_{i}+2\cos \phi_ky_{i}y_{j}\,>\,0.
\]
Similarly, completing the square yields
\[
\begin{aligned}
&y_{i}^2+y_{j}^2+y_{k}^2+2\cos \phi_iy_{j}y_{k}+
2\cos \phi_j y_{k}y_{i}+2\cos \phi_ky_{i}y_{j}\\
\,\geq\,&\sin^2\phi_k\sin^2\Theta_i\sin^2\Theta_jr_k^2r_i^2\\
\,>\,&0.
\end{aligned}
\]
\end{itemize}
In summary, the lemma is proved.
\end{proof}

As in Figure~\ref{F-3}, let $\triangle_{ijk}$ be the triangle whose vertices are the centers of disks $D_i,D_j,D_k$ and let $\alpha_i,\alpha_j,\alpha_k$ be the corresponding inner angles.

\begin{lemma}\label{L-2-5}
In both spherical and Euclidean geometries, we have
\begin{subequations}
\begin{align}
&\lim_{(r_i,r_j,r_k)\to(0,a,b)}\alpha_i
\,=\,\pi-\Theta_i,\label{E-2-5a}\\
&\lim_{(r_i,r_j,r_k)\to(0,0,c)}\alpha_i+\alpha_j
\,=\,\pi,\label{E-2-5b}\\
&\lim_{(r_i,r_j,r_k)\to(0,0,0)}\alpha_i+\alpha_j+\alpha_k
\,=\,\pi,\label{E-2-5c}
\end{align}
\end{subequations}
where $a,b,c\in(0,\pi)$ (resp. $a,b,c\in(0,\infty)$) are constants in spherical (resp. Euclidean) geometry.
\end{lemma}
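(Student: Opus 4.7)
The plan is to express each side length $l_\mu$ of $\triangle_{ijk}$ explicitly in terms of $(r_i,r_j,r_k)$ and $(\Theta_i,\Theta_j,\Theta_k)$, take limits in each of the three cases, and then read off the angle limits from the law of cosines. By construction $l_i$ is the (Euclidean or spherical) distance between the centers of $D_j$ and $D_k$, so
\[
l_i^2 \,=\, r_j^2 + r_k^2 + 2\cos\Theta_i\, r_j r_k \qquad \text{(Euclidean)},
\]
\[
\cos l_i \,=\, \cos r_j\cos r_k - \cos\Theta_i\sin r_j\sin r_k \qquad \text{(spherical)},
\]
with cyclic analogues for $l_j$ and $l_k$. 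These are jointly continuous in the radii, so the limits of the $l_\mu$ are immediate in each of the three cases.

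For part (a), as $(r_i,r_j,r_k)\to(0,a,b)$ one reads off $l_j\to b$, $l_k\to a$, and $l_i$ converging to a fixed positive value $L_i$ determined by $(a,b,\Theta_i)$. Plugging into the law of cosines at vertex $i$---namely $\cos\alpha_i=(l_j^2+l_k^2-l_i^2)/(2l_jl_k)$ in the Euclidean case, or $\cos\alpha_i=(\cos l_i-\cos l_j\cos l_k)/(\sin l_j\sin l_k)$ in the spherical case---a direct algebraic cancellation yields $\cos\alpha_i\to -\cos\Theta_i$, and hence $\alpha_i\to\pi-\Theta_i$.

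For part (b), as $(r_i,r_j,r_k)\to(0,0,c)$ we have $l_i,l_j\to c$ and $l_k\to 0$, so $\triangle_{ijk}$ degenerates to a segment of length $c$. The law of cosines at vertex $k$ then forces $\cos\alpha_k\to 1$, i.e.\ $\alpha_k\to 0$, while the area of $\triangle_{ijk}$ also tends to $0$. Since the angle sum of $\triangle_{ijk}$ equals $\pi$ in the Euclidean case and $\pi+\mathrm{area}(\triangle_{ijk})$ in the spherical case, this gives $\alpha_i+\alpha_j\to\pi$. Part (c) follows by the same reasoning: all three side lengths tend to $0$, the triangle shrinks to a point, and the angle sum tends to $\pi$ (trivially in Euclidean geometry, and by area $\to 0$ in spherical geometry).

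No serious obstacle is expected. The only mild items to check are the algebraic cancellation in part (a)---which is clean in both geometries because $(0,a,b)$ is a point of continuity of every cosine formula involved---and the vanishing of $\mathrm{area}(\triangle_{ijk})$ in parts (b) and (c), which is automatic from the collapse of the side lengths. The argument uses nothing from the earlier lemmas on three-circle configurations beyond the explicit side-length formulas above.
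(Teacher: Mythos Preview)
Your argument is correct and matches the paper's approach almost exactly: explicit side-length formulas, the law of cosines for the individual angles, and Gauss--Bonnet for the spherical angle sum. The only variation is in the spherical case of part (b), where the paper directly estimates
\[
\cos\alpha_i+\cos\alpha_j \,=\, \frac{\sin(l_i+l_j)\bigl[\cos(l_i-l_j)-\cos l_k\bigr]}{\sin l_i\sin l_j\sin l_k}
\]
and shows this tends to $0$, whereas you first show $\alpha_k\to 0$ via the law of cosines and then appeal to $\alpha_i+\alpha_j+\alpha_k=\pi+\mathrm{area}$. Your route is arguably more uniform with part (c); just note that the assertion $\mathrm{area}(\triangle_{ijk})\to 0$ in part (b) deserves one explicit line of justification beyond ``the triangle degenerates to a segment'' (for instance, the triangle is contained in the lune of angle $\alpha_k$ at the vertex opposite $l_k$, so its area is at most $2\alpha_k\to 0$; alternatively invoke L'Huilier's formula in the side lengths).
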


\begin{proof}
In spherical geometry, the law of cosines gives
\[
\cos\alpha_i\,=\,\frac{\cos l_i-\cos l_j\cos l_k}{\sin l_j\sin l_k}.
\]
As $(r_i,r_j,r_k)\rightarrow(0,a,b)$, we have $l_j\to b,$  $l_k\to a$
and
\[
\cos l_i\,\to\, \cos a\cos b-\cos \Theta_i\sin a\sin b.
\]
Consequently,
\[
\cos\alpha_i\,\to\, -\cos \Theta_i,
\]
which implies~\eqref{E-2-5a}.

Let us consider~\eqref{E-2-5b}. A routine computation indicates
\[
\cos \alpha_i+\cos \alpha_j\,=\, \frac{\sin (l_i+l_j)\big[\cos(l_i- l_{j})-\cos l_{k}\big]}{\sin l_{i}\sin l_j\sin l_{k}}.
\]
Therefore,
\[
\begin{aligned}
 |\cos \alpha_i+\cos \alpha_j|\,\leq\,&\,\frac{|\sin (l_i+l_j)|(1-\cos l_{k})}{\sin l_{i}\sin l_j\sin l_{k}}\\
\,=\,&\,\frac{|\sin (l_i+l_j)|\sin(l_k/2)}{\sin l_{i}\sin l_j\cos( l_{k}/2)}.
\end{aligned}
\]
As $(r_i,r_j,r_k)\to (0,0,c)$, we obtain
\[
l_i\,\to\, c,\;\; l_j\,\to\, c,\;\; l_k\,\to\, 0.
\]
It follows that
\[
\cos \alpha_i+\cos \alpha_j \,\to\, 0,
\]
which yields
\[
\alpha_i+\alpha_j\, \to\, \pi.
\]

It remains to prove~\eqref{E-2-5c}. As $(r_i,r_j,r_k)\to (0,0,0)$, the area of the triangle tends to zero. Applying the Gauss-Bonnet Formula, we get the required result.

In Euclidean geometry, the lemma has been proved in the work of Ge-Hua-Zhou~\cite{Ge-Hua-Zhou}. In fact, formula~\eqref{E-2-5a} is straightforward. To show~\eqref{E-2-5b}, consider the formula
\[
\cos\alpha_k\,=\,\frac{r_{k}^2+r_{j}r_{k}\cos\Theta_{i}+r_{i}r_{k}\cos\Theta_{j}-r_{i}r_{j}\cos\Theta_{k}}{\sqrt{r_{j}^2+r_{k}^2+2r_{j}r_{k}\cos\Theta_{i}}
\sqrt{r_{i}^2+r_{k}^2+2r_{i}r_{k}\cos\Theta_{j}}}.
\]
As $(r_i,r_j,r_k)\to(0,0,c)$, it is easy to see
\[
\cos \alpha_k \,\to\, 1.
\]
Thus $\alpha_k\to 0$, which asserts~\eqref{E-2-5b}. Formula~\eqref{E-2-5c} is trivial.
\end{proof}

\begin{lemma}\label{L-2-6}
Given three mutually intersecting disks $D_i, D_j, D_k$ on the Riemann sphere $\hat{\mathbb C}$ with exterior intersection angles $\Theta_i,\Theta_j,\Theta_k\in[0,\pi)$, suppose $D_i\cap D_j\subset D_k$. Then
\[
\Theta_i+\Theta_j \, \geq\, \Theta_k + \pi.
\]
Particularly, when $D_i,D_j$ intersect at only one point, we have
\[
\Theta_i +\Theta_j\,\geq\, \pi,
\]
where the equality holds if and only if $\partial D_i\cap \partial D_j\cap\partial D_k\neq\emptyset$.
\end{lemma}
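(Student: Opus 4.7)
The plan is to apply a Möbius transformation to normalize the configuration and then extract the angle inequality by elementary planar computation. Concretely, I would pick an intersection point $p \in \partial D_i \cap \partial D_j$ and apply a Möbius map $T$ sending $p$ to $\infty$. Since Möbius maps preserve disks, the containment relation, and exterior intersection angles, the problem is transported to $\mathbb C$: $\partial D_i$ and $\partial D_j$ become two lines $L_i, L_j$ crossing at the image $q' = T(q)$ of the other boundary intersection point $q$, and the image of the lens $D_i \cap D_j$ is a closed planar sector $S$ at $q'$ of opening angle $\Theta_k$ (the interior angle of the lens at its cusp). Normalize so that $L_i$ is the $x$-axis, $L_j$ through the origin $q'$ makes angle $\Theta_k$ with $L_i$, $H_i = T(D_i) = \{y \geq 0\}$, and $H_j = T(D_j)$ is the half-plane on the side containing $(1,0)$, so that $S = H_i \cap H_j = \{re^{i\phi} : r \geq 0,\ 0 \leq \phi \leq \Theta_k\}$. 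Because $p \in D_k$, the image $T(D_k)$ contains $\infty$, and the argument splits according to whether $p \in \partial D_k$ or $p \in \operatorname{int}(D_k)$.

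If $p \in \partial D_k$, then $T(\partial D_k) = L_k$ is also a line and $T(D_k) = H_k$ is a half-plane. Parameterizing $L_k$ via the angle $\omega$ of its outward unit normal and invoking the identity $\vec n_v \cdot \vec n_w = -\cos \Theta$ for any two half-plane disks meeting at exterior angle $\Theta$, I obtain $\cos \Theta_j = \sin \omega$ and $\cos \Theta_i = \sin(\Theta_k - \omega)$. The containment $S \subseteq H_k$ restricts $\omega$ to a specific arc, and a short trigonometric calculation in that arc yields $\Theta_i + \Theta_j = \Theta_k + \pi$, producing the equality case.

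If instead $p \in \operatorname{int}(D_k)$, then $T(\partial D_k) = C_k$ is a circle bounding a Euclidean disk $B_k$ with $T(D_k) = \hat{\mathbb C} \setminus B_k$, and the containment becomes $S \cap B_k = \emptyset$. The same normal-angle identity applied to $(L_i, C_k)$ and $(L_j, C_k)$ pins down the center and radius of $B_k$ explicitly:
\begin{equation*}
c = \rho\left(\frac{\cos \Theta_i + \cos \Theta_j \cos \Theta_k}{\sin \Theta_k},\ \cos \Theta_j\right),\quad \rho > 0.
\end{equation*}
Because the three disks are mutually intersecting, $C_k$ cuts $L_i$ and $L_j$ in genuine chords of $B_k$, and the disjointness $B_k \cap S = \emptyset$ forces both chords to lie strictly on the far side of the sector vertex, which after a short manipulation translates into
\begin{equation*}
\cos \Theta_i + \cos(\Theta_j - \Theta_k) < 0 \quad \text{and} \quad \cos \Theta_j + \cos(\Theta_i - \Theta_k) < 0.
\end{equation*}
Applying sum-to-product, both inequalities can be factored, and they share the common factor $\cos\frac{\Theta_i + \Theta_j - \Theta_k}{2}$. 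The alternative sign scenario would require both $\Theta_i + \Theta_k > \pi + \Theta_j$ and $\Theta_j + \Theta_k > \pi + \Theta_i$, which upon addition contradicts $\Theta_k < \pi$; therefore $\cos\frac{\Theta_i + \Theta_j - \Theta_k}{2} < 0$, giving $\Theta_i + \Theta_j > \Theta_k + \pi$.

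Combining the two subcases yields $\Theta_i + \Theta_j \geq \Theta_k + \pi$, with equality exactly when $\partial D_i \cap \partial D_j \cap \partial D_k \neq \emptyset$, i.e., the first subcase. The main technical obstacle is Case B: correctly deriving the center/radius formula for $B_k$ and running the sign case analysis on the sum-to-product factorization; the rest is conceptually routine once the Möbius reduction is in place. The final \emph{Particularly} assertion is the specialization $\Theta_k = 0$ (when $D_i, D_j$ meet at a single tangent point): the sector $S$ degenerates, and essentially the same argument (or a direct continuity argument from the generic case) gives $\Theta_i + \Theta_j \geq \pi$ with equality characterized by $\partial D_k$ passing through the tangent point.
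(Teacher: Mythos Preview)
Your approach is correct and shares the paper's key idea: apply a M\"obius map sending a point of $\partial D_i\cap\partial D_j$ to $\infty$, so that $\partial D_i,\partial D_j$ become lines bounding a sector $S$ of opening $\Theta_k$, and then extract the angle inequality from the planar picture. The paper's execution after this reduction is much lighter than yours, however: it simply observes from the figure that the three angles $\pi-\Theta_i$, $\pi-\Theta_j$, $\Theta_k$ fit together to give $(\pi-\Theta_i)+(\pi-\Theta_j)+\Theta_k\le\pi$, without any case split or coordinate computation. Your explicit center formula and sum-to-product analysis recover the same inequality but at considerably greater cost.

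One small correction: in your Case~B the chord conditions should be non-strict ($a+\sqrt{\rho^2-b^2}\le 0$, etc.), since the open disk $B_k$ may have the sector vertex $q'$ on its boundary when $q\in\partial D_k$ even though $p\notin\partial D_k$. Consequently equality $\Theta_i+\Theta_j=\Theta_k+\pi$ can also occur in Case~B, not only in Case~A; the correct equality criterion is $\partial D_i\cap\partial D_j\cap\partial D_k\neq\emptyset$ (either $p$ or $q$ lies on $\partial D_k$), which your argument does yield once the non-strict inequalities are carried through.
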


\begin{figure}[htbp]\centering
\includegraphics[width=0.81\textwidth]{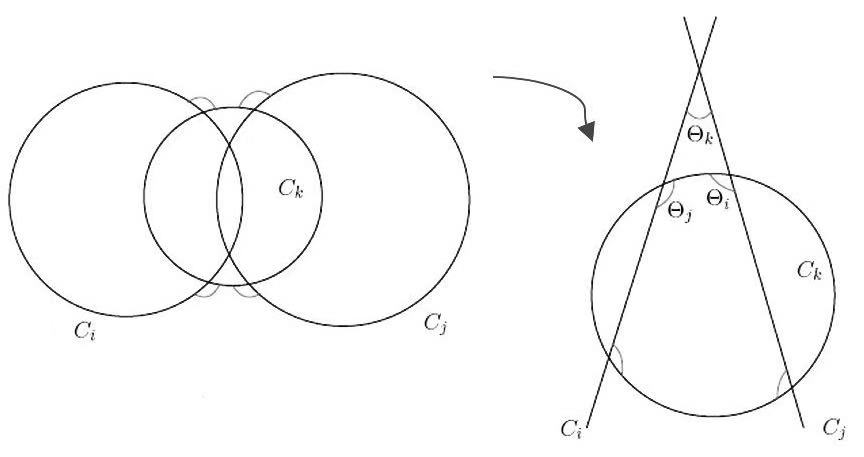}
\caption{The angle relation}\label{F-5}
\end{figure}

\begin{proof}
Using an appropriate M\"{o}bius transformation, we assume the boundaries  $\partial D_i, \partial D_j$ become a pair of intersecting lines. As in Figure~\ref{F-5}, it is easy to see
\[
\pi-\Theta_i+\pi-\Theta_j+\Theta_k\, \leq \, \pi.
\]
Hence
\[
\Theta_i+\Theta_j\, \geq \, \pi+\Theta_k,
\]
where the equality holds if and only if
$\partial D_i\cap \partial D_j\cap \partial D_k\neq\emptyset$. The remainder part of the proof is trivial. We omit the details.
\end{proof}

\begin{lemma}\label{L-2-7}
Given three mutually intersecting disks $D_i, D_j, D_k$ on the Riemann sphere
$\hat{\mathbb C}$ with exterior intersection angles $\Theta_i,\Theta_j,\Theta_k\in[0,\pi)$, suppose
$\mathbb{D}_i\cup \mathbb{D}_j\cup \mathbb{D}_k\subsetneq\hat{\mathbb C}$, where $\mathbb D_\mu$ denotes the interior of
$D_\mu$ for $\mu=i,j,k$. If $\Theta_i+\Theta_j+\Theta_k<\pi$, then
\[
D_i\cap D_j\cap D_k\,=\,\emptyset.
\]
\end{lemma}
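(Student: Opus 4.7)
The plan is to reduce to Euclidean geometry via a M\"obius transformation and then run a Gauss--Bonnet argument on the candidate intersection region. First, since $\mathbb{D}_i\cup\mathbb{D}_j\cup\mathbb{D}_k\subsetneq\hat{\mathbb C}$, I would pick a point $p$ outside all three open disks and apply a fractional linear map sending $p$ to $\infty$. Because exterior intersection angles are M\"obius-invariant and $p$ lies in no open disk, each image is a bounded Euclidean disk (or a half-plane, treated as the degenerate limit) with the same angles $\Theta_i,\Theta_j,\Theta_k$. Thus it suffices to prove the Euclidean statement.

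Next, assume for contradiction that $R:=D_i\cap D_j\cap D_k\neq\emptyset$. Then $R$ is a non-empty compact convex set whose boundary decomposes into circular arcs lying on $\partial D_i,\partial D_j,\partial D_k$. The main case is that $\partial R$ contains an arc from each of the three circles and has three vertices. The key observation is that each vertex $v_\tau$ lies in $\partial D_\mu\cap\partial D_\nu\cap\operatorname{int}(D_\tau)$, so near $v_\tau$ the region $R$ coincides with the lens $D_\mu\cap D_\nu$, and the interior angle of $R$ at $v_\tau$ equals the lens angle at the tip, which is the vertical opposite of the exterior intersection angle of $D_\mu$ and $D_\nu$ and therefore equals $\Theta_\tau$. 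On each arc in $\partial D_\mu$, traversed counterclockwise around $R$, the geodesic curvature equals $+1/r_\mu$ because $R\subset D_\mu$ places the center $O_\mu$ on the interior side of the arc; hence its contribution to $\oint k_g\,ds$ is the subtended angle $\phi_\mu\ge 0$ at $O_\mu$. The Gauss--Bonnet formula applied to $R\subset\mathbb{R}^2$ then gives
\[
\phi_i+\phi_j+\phi_k+(\pi-\Theta_i)+(\pi-\Theta_j)+(\pi-\Theta_k)\,=\,2\pi,
\]
so $\sum_\mu\Theta_\mu=\pi+\sum_\mu\phi_\mu\ge\pi$, contradicting the hypothesis $\sum_\mu\Theta_\mu<\pi$.

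The remaining possibilities for $\partial R$ are degenerate and I would dispatch them with Lemma~\ref{L-2-6}. If only two of the three circles contribute arcs---say $\partial D_i$ and $\partial D_j$---then $R=D_i\cap D_j\subset D_k$, so Lemma~\ref{L-2-6} gives $\Theta_i+\Theta_j\ge\Theta_k+\pi$, whence $\sum_\mu\Theta_\mu\ge 2\Theta_k+\pi\ge\pi$, a contradiction. If $R$ collapses to a single point $v$, then either a pair of boundary circles is tangent at $v$, in which case the tangency clause of Lemma~\ref{L-2-6} again forces $\Theta_i+\Theta_j\ge\pi$, or all three boundary circles meet transversally at $v$ with the ``inside all three'' sector collapsed to angle zero; a short count of the six sectors at $v$, using that three distinct tangent lines produce three pairs of vertical sectors with angles summing to $\pi$, shows in this last case that $\sum_\mu\Theta_\mu=\pi$, still contradicting the strict inequality.

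The principal obstacle is the Gauss--Bonnet step: correctly identifying the interior angle of $R$ at each vertex as $\Theta_\tau$ (rather than $\pi-\Theta_\tau$), pinning down the sign of the geodesic curvature on each arc, and confirming that the case decomposition of $\partial R$ is truly exhaustive---especially in the single-point degenerations where $R$ sits along the boundary.
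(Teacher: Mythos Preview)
Your Gauss--Bonnet argument is essentially correct, but it is a genuinely different route from the paper's. The paper does not compute any curvature integral. Instead, after the same M\"obius reduction (where it takes the extra care to show $\hat{\mathbb C}\setminus(\mathbb D_i\cup\mathbb D_j\cup\mathbb D_k)$ has nonempty \emph{interior}, so that all three images are honest bounded disks), it invokes Lemma~\ref{L-2-4} to produce a one-parameter family $\{D_\mu(t)\}_{t\in[0,1]}$ with fixed radii $r_i,r_j,r_k$ and exterior angles $t\Theta_i,t\Theta_j,t\Theta_k$. At $t=0$ the three disks are mutually tangent, so their common intersection is empty; at $t=1$ it is assumed nonempty. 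By continuity there is a first $t_0$ at which $\cap_\mu D_\mu(t_0)$ is a single point, and an elementary angle count at that point forces $t_0(\Theta_i+\Theta_j+\Theta_k)=\pi$, contradicting $\sum\Theta_\mu<\pi$. This deformation argument is short and avoids any case analysis on the shape of $R$.

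Your approach is more self-contained (it does not rely on the existence result Lemma~\ref{L-2-4}) and makes the angle obstruction visible as total curvature. Two points are worth tightening. First, the combinatorial decomposition of $\partial R$ is not quite exhaustive as stated: in principle $\partial R$ can have four or six arcs (a circle can meet the lens of the other two in two sub-arcs). This is harmless, since each pair of boundary circles contributes at most two corners, so $\sum_l\alpha_l\le 2\sum_\mu\Theta_\mu<2\pi$, while your Gauss--Bonnet identity gives $\sum_l\alpha_l\ge(m-2)\pi$; hence $m\le3$, reducing to your main case. Second, in the M\"obius step you allow $p$ to lie on a boundary circle, producing a half-plane; this works, but the paper's preliminary observation that the complement has interior lets you avoid that wrinkle entirely.
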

\begin{proof}
First we claim the interior of the set $\mathcal I=\hat{\mathbb C}\setminus(\mathbb{D}_i\cup \mathbb{D}_j\cup \mathbb{D}_k)$ is non-empty. Otherwise, at least one connected component of $\mathcal I$ is an arc or a point, which yields $\Theta_i+\Theta_j+\Theta_k=\pi$. This contradicts the condition. Using an appropriate M\"{o}bius transformation, we may assume the infinity is an interior point of $\mathcal I$. In this way we regard $D_i, D_j, D_k$ as disks on the complex plane $\mathbb C$. Let $r_i,r_j,r_k$ be the radii of $D_i, D_j, D_k$, respectively. For each $t\in[0,1]$, by Lemma~\ref{L-2-4}, there are three mutually intersecting disks $D_{i}(t),D_{j}(t),D_{k}(t)$ on $\mathbb C$ realizing the data $(r_i,r_j,r_k,t\Theta_i,t\Theta_j,t\Theta_k)$ and satisfying  $D_{\mu}(1)=D_\mu$ for $\mu=i,j,k$.

Now suppose the lemma is not true. We have
\[
\cap_{\mu=i,j,k}D_\mu(1)\,=\,\cap_{\mu=i,j,k}D_\mu\neq\emptyset.
\]
Meanwhile, it is easy to see
\[
\cap_{\mu=i,j,k}D_\mu(0)\,=\,\emptyset.
\]
That means there exists $t_0\in[0,1]$ such that $\cap_{\mu=i,j,k}D_\mu(t_0)$ consists of a point. Hence
\[
t_0(\Theta_i+\Theta_j+\Theta_k)\,=\,\pi,
\]
which also contradicts the condition.
\end{proof}

\section{Patterns of circles without interstices}\label{S-3}
For a circle pattern $\mathcal P=\{D_v\}_{v\in V}$ on the Riemann sphere $\hat{\mathbb C}$, we call each connected component of the set $\hat{\mathbb C}\setminus (\cup_{v\in V}D_v)$ an interstice. In this section we focus on circle patterns which possess no interstice. According to Thurston's observation, these patterns are closely related to convex hyperbolic polyhedra of finite-volume. Our purpose is the following special case of Theorem~\ref{T-1-4}.

\begin{theorem}\label{T-3-1}
Let $\mathcal T$ be a triangulation of the sphere with more than four vertices. Assume that $\Theta:E\to(0,\pi)$ is a function satisfying conditions $\mathrm{\mathbf{(c1)}},\mathrm{\mathbf{(c2)}},
\mathrm{\mathbf{(c3)}},\mathrm{\mathbf{(c4)}}$ and the condition below:
\begin{itemize}
\item[$\mathrm{\mathbf{(m5)}}$] If $e_1,e_2,e_3$ form the boundary of a triangle of $\mathcal T$, then $\sum_{\mu=1}^3\Theta(e_\mu)\geq \pi$.
\end{itemize}
Then there exists an \textbf{irreducible} circle pattern $\mathcal P$ on the Riemann sphere $\hat{\mathbb C}$ with contact graph isomorphic to the $1$-skeleton of $\mathcal T$ and exterior intersection angles given by $\Theta$.
\end{theorem}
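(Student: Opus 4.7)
My plan is to cast the problem as a zero-finding problem for an angle-defect map on a configuration space of radii and then apply topological degree theory, starting from a base case covered by existing work and deforming to the obtuse-angle setting. Because condition (m5) rules out interstices, an admissible pattern is determined, up to rotation of $S^2$, by the spherical radii $r=(r_v)_{v\in V}\in(0,\pi)^V$. For each triangle $[ijk]$ of $\mathcal T$, Lemma~\ref{L-2-2} (whose hypotheses are exactly condition (c1)) produces a unique three-disk configuration in spherical geometry with the specified radii and the prescribed exterior angles, yielding inner angles $\alpha_v^{jk}(r,\Theta)\in(0,\pi)$ at each disk center. Gluing these spherical triangles around every vertex $v$ determines a developing map whose holonomy is trivial --- equivalently, the pattern closes up to a genuine irreducible circle pattern realizing $(\mathcal T,\Theta)$ --- precisely when the vertex angle defects
\[
\Phi_\Theta(r)_v\;=\;\sum_{[vjk]}\alpha_v^{jk}(r,\Theta)\;-\;2\pi
\]
vanish for every $v\in V$. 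By the Gauss--Bonnet formula on the triangulated $S^2$, $\sum_v\Phi_\Theta(r)_v$ depends only on $\Theta$, so $\Phi_\Theta$ effectively takes values in a codimension-one hyperplane of $\mathbb R^V$, which matches the dimension of the radius space modulo the residual symmetry.

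Next I would set up a continuation. The admissible set $W$ of $\Theta$ satisfying (c1)--(c4) and (m5) is cut out by linear (strict and weak) inequalities, hence convex and path-connected. Pick a reference $\Theta_0\in W$ for which an irreducible realization has already been established --- for example, by a suitable specialization of the Bowers--Stephenson theorem covering the non-obtuse regime --- and connect $\Theta_0$ to the target $\Theta$ by the linear homotopy $\Theta_t=(1-t)\Theta_0+t\Theta$, which remains in $W$ by convexity. The goal is to show that the $\bmod\,2$ topological degree of $\Phi_{\Theta_t}$ on a suitable bounded open set $\Omega$ in the reduced radius configuration space is well defined and independent of $t$. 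Uniqueness of the solution at the base point $t=0$ forces the degree to equal $1$; invariance under the homotopy then produces a zero of $\Phi_\Theta$, hence the desired pattern, at $t=1$.

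The pivotal ingredient is a properness/compactness statement: as $t$ ranges over $[0,1]$, the zero set of $\Phi_{\Theta_t}$ stays compactly inside the interior of the reduced radius space. This is exactly Lemma~\ref{L-3-4}, whose proof would proceed by contradiction using the machinery of Section~\ref{S-2}. Given a sequence of irreducible patterns $\mathcal P_n$ with $\Theta_n\to\Theta\in W$, one extracts a limit $\mathcal P_\infty$ in a reasonable compactification, and each possible degeneracy must then be ruled out. Lemmas~\ref{L-2-1} and~\ref{L-2-5} together prevent any radius from tending to $0$: a collapsing disk would force an angle-sum identity along a simple closed curve that Lemma~\ref{L-2-1} explicitly forbids. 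Lemma~\ref{L-2-6} combined with condition (c2) rules out combinatorially non-adjacent disks of $\mathcal P_\infty$ overlapping or containing one another. Lemma~\ref{L-2-7}, together with the irreducibility hypothesis, prevents the limit pattern from being reducible.

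The principal obstacle will be proving this properness, and within it the step that excludes interaction of combinatorially non-adjacent disks in the limit. In the classical non-obtuse regime this is automatic, but with obtuse $\Theta(e)$ nearby disks can bulge close to one another and threaten to meet unexpectedly. Condition (c2) is engineered for precisely this difficulty via Lemma~\ref{L-2-6}: the bound $\Theta(e_1)+\Theta(e_2)\le\pi$ on a homologically non-adjacent arc prevents a third disk from swallowing the pairwise intersection of its two neighbors, which is the only geometric mechanism that could corrupt the contact combinatorics in the limit. Once Lemma~\ref{L-3-4} is secured, the degree-theoretic continuation is essentially standard.
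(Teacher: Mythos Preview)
Your approach is close in spirit to the paper's but differs in a way that hides a real gap.

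The paper does \emph{not} parametrize patterns by radii alone. It works on the full configuration space $M=\hat{\mathbb C}^{|V|}\times(0,\pi)^{|V|}$ of centers and radii, passes to the open subset $M^\star_{IG}$ of \emph{normalized irreducible} configurations whose contact graph is exactly the $1$-skeleton of $\mathcal T$, and studies the map $\mathcal Ev:M^\star_{IG}\to(0,\pi)^{|E|}$ sending a configuration to its exterior-angle vector. Both sides have dimension $|E|$; Lemma~\ref{L-3-4} supplies a relatively compact $\Lambda\supset\mathcal Ev^{-1}(\gamma)$, and Lemma~\ref{L-3-8} (via Andreev) pins down $\deg(\mathcal Ev,\Lambda,\widetilde\Theta)=\pm1$ at a regular base value chosen by Sard.

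Your setup instead uses the Thurston-style curvature map $\Phi_\Theta$ on radii, and the sentence ``the pattern closes up to a genuine irreducible circle pattern realizing $(\mathcal T,\Theta)$ precisely when the vertex angle defects vanish'' is where the gap lies. Vanishing defects yields only a $\mathcal T$-\emph{type} pattern: the spherical triangles glue to a smooth sphere and the developing map is a homeomorphism, but nothing prevents two combinatorially non-adjacent disks $D_\alpha,D_\beta$ from overlapping with nonempty interior. The paper's introduction flags exactly this as the shortcoming of earlier angle-sum approaches. Your appeal to Lemma~\ref{L-2-6} and condition $\mathrm{\mathbf{(c2)}}$ handles only the borderline case where $D_\alpha\cap D_\beta$ is a single point; it does not exclude open overlap for an individual pattern. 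In the paper's proof of Lemma~\ref{L-3-4} (Step~3, case~$(i)$), open overlap in the limit is ruled out \emph{because the approximating configurations already lie in $M_G$}: disjointness of non-adjacent disks is built into the domain from the outset and then shown to persist. Your radius-only domain carries no such constraint, so a zero of $\Phi_\Theta$ produced by degree theory need not have the correct contact graph, nor be irreducible.

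The repair is to take $\Omega$ to be the open set of radii whose associated $\mathcal T$-type pattern lies in $M_{IG}$, and then prove that the zero set of $\Phi_{\Theta_t}$ stays compactly inside $\Omega$ for all $t\in[0,1]$; at that point your argument and the paper's become essentially equivalent, just in different coordinates. Two minor points: Lemma~\ref{L-2-2} requires $\sum\Theta(e_\mu)>\pi$ in addition to $\mathrm{\mathbf{(c1)}}$, so the equality case of $\mathrm{\mathbf{(m5)}}$ needs Lemma~\ref{L-2-3}; and uniqueness at the base point does not by itself give degree $\pm1$---the paper uses Sard's theorem to select a \emph{regular} base value $\widetilde\Theta$.
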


We will utilize spherical geometry to study these circle patterns.
To begin with, we endow $\hat{\mathbb C}$ with the following Riemannian metric
\[
\mathrm{d s}\,=\,\frac{2|\mathrm{d}z|}{1+|z|^2}.
\]
Note that $(\hat{\mathbb C},\mathrm{d s})$ is isometric to the unit sphere $\mathbb S^2$ in $\mathbb R^3$. In what follows we shall not distinguish  $(\hat{\mathbb C},\mathrm{d s})$ and $\mathbb S^2$ for the sake of simplicity.

\subsection{Configuration spaces}
We wish to produce the desired circle pattern via configurations spaces. The framework of this approach has been rooted in the works of Bauer-Stephenson-Wegert~\cite{Bauer-Stephenson-Wegert}, Zhou~\cite{Zhou}, Bowers-Bowers-Pratt~\cite{Bowers-Bowers-Pratt} and Connelly-Gortler~\cite{Connelly-Gortler} in consideration of computational mechanisms (see~\cite{Bauer-Stephenson-Wegert,Connelly-Gortler}) or understanding of local rigidity (see~\cite{Zhou,Bowers-Bowers-Pratt}).

Let $V,E,F$ denote the sets of vertices, edges and triangles of $\mathcal T$.  Set $M=\hat{\mathbb C}^{|V|}\times (0,\pi)^{|V|}$. Then $M$ is a smooth manifold parameterizing the space of patterns of $|V|$ disks on $(\hat{\mathbb C},\mathrm{d}s)$. Since $\mathcal T$ is a triangulation, it is trivial to see
\[
3|F|\,=\,2|E|.
\]
Combining with Euler's Formula, we have
\[
\begin{aligned}
\dim (M)\,&=\,3|V|\,\\
\,&=\,3|V|+\big(3|F|-2|E|\big)\\
\,&=\,3\big(|V|-|E|+|F|\big)+|E|\\
\,&=\,|E|+6.
\end{aligned}
\]

A point $(\mathbf{z},\mathbf{r})=(z_1,\cdots,z_{|V|},r_1,\cdots,r_{|V|})\in M$ is called a configuration, since it assigns each vertex $v_i\in V$ a closed disk $D_i$, where $D_i$ is centered at $z_i$ and is of radius $r_i$. For each edge $e=[v_i,v_j]\in E$, the inversive distance is defined to be
\[
I(e,\mathbf{z},\mathbf{r})\,=\,\frac{\cos r_i\cos r_j-\cos d(z_i,z_j)}{\sin r_i\sin r_j},
\]
where $d(z_i,z_j)$ denotes the distance between $z_i$ and $z_j$. The subspace $M_{E}\subset M$ is the set of configurations subject to
\[
-1\,<\,I(e,\mathbf{z},\mathbf{r})\,<\,1
\]
for every $e\in E$. Let $M_{\mathcal T}\subset M_{E}$ be the subspace of configurations which give $\mathcal T$-type circle patterns. More precisely, $(\mathbf{z},\mathbf{r})\in M_{\mathcal T}$ if and only if there is a geodesic triangulation $\mathcal T(\mathbf{z},\mathbf{r})$ of  $(\hat{\mathbb C},\mathrm{d s})$ with the properties below:
\begin{itemize}
\item[$\mathrm{\langle \mathbf{x1}\rangle}$] $\mathcal T(\mathbf{z},\mathbf{r})$ is isotopic to $\mathcal T$;
\item[$\mathrm{\langle \mathbf{x2}\rangle}$]The vertices of $\mathcal T(\mathbf{z},\mathbf{r})$ coincide with the centers $z_1,z_2,\cdots,z_{|V|}$.
\end{itemize}
We define $M_{G}\subset M_{\mathcal T}$ to be the subspace of configurations under the further restriction:
\begin{itemize}
\item[$\mathrm{\langle \mathbf{x3}\rangle}$] The disks
$D_\alpha, D_\beta$ are disjoint whenever there is no edge between $v_\alpha$ and $v_\beta$.
\end{itemize}
Obviously, each $(\mathbf{z},\mathbf{r})\in M_{G}$ gives a circle pattern realizing the $1$-skeleton of $\mathcal T$ as contact graph. Now let $M_{IG}\subset M_{G}$ consist of configurations satisfying the \textbf{irreducible} property:
\begin{itemize}
\item[$\mathrm{\langle \mathbf{x4}\rangle}$] When $A$ is a proper subset of $V$, then $\cup_{v_i\in A} D_i\subsetneq \hat{\mathbb C}$.
\end{itemize}
Note that $M_E, M_{\mathcal T},M_{G}, M_{IG}$ are all non-empty open subsets of $M$ and thus are smooth manifolds of dimension $|E|+6$. Here the non-emptiness follows from Theorem~\ref{T-1-1}.

Choose a triangle $\triangle^\star$ of $\mathcal T$. Let $v_a,v_b,v_c$ be the vertices of $\triangle^\star$ and let $e_a,e_b,e_c$ be the edges opposite to $v_a,v_b,v_c$, respectively. We use $M_{IG}^\star\subset M_{IG}$ to represent the subspace of configurations such that the following normalization conditions hold:
\begin{itemize}
\item[$\mathrm{\langle \mathbf{x5}\rangle}$]
    $z_a\,=\,0$,\;\, $z_b\,>\,0$,\;\, $0\,<\,\operatorname{Arg}(z_c)\,<\,\pi$;
\item[$\mathrm{\langle \mathbf{x6}\rangle}$]
$r_a\,=\,r_b\,=\,r_c\,=\,\pi/4$.
\end{itemize}
It is easy to see $M^\star_{IG}$ is a smooth manifold of dimension $|E|$. In addition, we have the following smooth map
\[
\begin{aligned}
\mathcal{E}v:\quad &M^\star_{IG} &\longrightarrow \quad &\quad\;  Y:=(0,\pi)^{|E|}\\
&(\mathbf{z},\mathbf{r})  &\longmapsto \quad  & \big(\Theta(e_1,\cdot),\Theta(e_2,\cdot),\cdots\big),
\end{aligned}
\]
where $\Theta(e,\cdot)=\arccos I(e,\cdot)$.

Below is an important property regarding to $\mathcal T$-type circle patterns. For simplicity, we shall write the interior of every disk $D_i$ as $\mathbb D_i$.

\begin{lemma}\label{L-3-2}
Suppose $\mathcal P$ is a $\mathcal T$-type circle pattern on $(\hat{\mathbb C},\mathrm{d s})$ whose exterior intersection angle function $\Theta: E\to(0,\pi)$ satisfies condition $\mathrm{\mathbf{(c1)}}$.  Let $v_\alpha,v_\beta\in V$ be a pair of non-adjacent vertices. For any $p\in D_\alpha\cap D_\beta$,  there exists  $v_\eta\in V\setminus\{v_\alpha,v_\beta\}$ such that $p\in\mathbb D_\eta$.
\end{lemma}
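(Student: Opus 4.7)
The plan is to exploit the geodesic triangulation $\mathcal T(\mathbf z,\mathbf r)$ furnished by the $\mathcal T$-type hypothesis and localize the analysis to the triangle containing $p$. Let $\triangle_0 = [z_i, z_j, z_k]$ be a closed triangle of $\mathcal T(\mathbf z,\mathbf r)$ containing $p$. Since $v_\alpha$ and $v_\beta$ are non-adjacent in $\mathcal T$, they cannot both appear among $v_i, v_j, v_k$---otherwise the edge joining them would be a side of $\triangle_0$. Hence $|\{v_i, v_j, v_k\} \cap \{v_\alpha, v_\beta\}| \le 1$, and at least two vertices of $\triangle_0$ belong to $V \setminus \{v_\alpha, v_\beta\}$. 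The task becomes showing $p \in \mathbb D_\eta$ for one of these vertices $v_\eta$.

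The key geometric ingredient is that each closed edge $[z_\mu, z_\nu]$ of $\triangle_0$ is covered by $D_\mu \cup D_\nu$, since the overlap of adjacent disks forces $d(z_\mu, z_\nu) < r_\mu + r_\nu$. I would then consider the geodesic segments $\tau$ from $z_\alpha$ to $p$ and $\sigma$ from $z_\beta$ to $p$; their lengths are $d(z_\alpha, p) \le r_\alpha$ and $d(z_\beta, p) \le r_\beta$ because $p \in D_\alpha \cap D_\beta$, so $\tau \subset D_\alpha$ and $\sigma \subset D_\beta$. If $v_\alpha$ (resp.\ $v_\beta$) is not a vertex of $\triangle_0$, then $z_\alpha$ (resp.\ $z_\beta$) lies outside $\triangle_0$ and the corresponding segment crosses $\partial \triangle_0$ at some point $q$ on an edge $[z_\mu, z_\nu]$, forcing $q \in (D_\mu \cup D_\nu) \cap D_\alpha$ (resp.\ $\cap D_\beta$). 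Applying condition~$\mathrm{\mathbf{(c1)}}$ via Lemma~\ref{L-2-6}---which, in its contrapositive form, prevents the pairwise intersection of two disks in a mutually intersecting triple from being contained in the third---one can transport these forced overlaps from the boundary of $\triangle_0$ back to $p$ itself and conclude that $p \in \mathbb D_\eta$ for a vertex $v_\eta$ of $\triangle_0$ lying in $V \setminus \{v_\alpha, v_\beta\}$. Strict interior containment (rather than merely membership in $D_\eta$) is extracted from the strict inequalities in~$\mathrm{\mathbf{(c1)}}$: if $p$ lay simultaneously on $\partial D_\eta$ and in $D_\alpha \cap D_\beta$, the resulting saturation would collide with the strict inequality of Lemma~\ref{L-2-6}.

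The hard part will be handling the possible presence of an \emph{interstice} inside $\triangle_0$---a region covered by none of $D_i, D_j, D_k$, which condition~$\mathrm{\mathbf{(c1)}}$ alone does not exclude (it appears when $\Theta_i + \Theta_j + \Theta_k < \pi$). In this sub-case $\{v_\alpha, v_\beta\} \cap \{v_i, v_j, v_k\} = \emptyset$ necessarily, and one must chase the segments $\tau$ and $\sigma$ into the neighboring triangles of $\mathcal T(\mathbf z, \mathbf r)$ to locate a disk $D_\eta$ from the surrounding flowers whose interior reaches into the interstice at $p$, while ensuring $v_\eta \neq v_\alpha, v_\beta$. Controlling this chaining---and in particular verifying that the disk so produced is distinct from $D_\alpha$ and $D_\beta$---is the technically delicate step, and is where the combination of~$\mathrm{\mathbf{(c1)}}$ with the $\mathcal T$-type constraint must be used most carefully.
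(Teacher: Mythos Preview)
Your localized approach is not the paper's, and as you yourself flag, the interstice sub-case is left open. The paper sidesteps that difficulty with a global ``flower'' containment: it proves
\[
D_\alpha \,\subset\, \operatorname{\mathbb{FL}}_\alpha \,:=\, \Big(\bigcup_{\mu=1}^m \mathbb D_\mu\Big)\cup S(\{v_\alpha\}),
\]
where $v_1,\dots,v_m$ are the neighbors of $v_\alpha$ and $S(\{v_\alpha\})$ is the open star. For $\partial D_\alpha\subset\operatorname{\mathbb{FL}}_\alpha$ one argues by contradiction: if some boundary point escaped, then $D_{\mu_0}\cap D_{\mu_0+1}\subset D_\alpha$ for consecutive neighbors, and Lemma~\ref{L-2-6} gives $\Theta([v_{\mu_0},v_\alpha])+\Theta([v_\alpha,v_{\mu_0+1}])\ge \pi+\Theta([v_{\mu_0},v_{\mu_0+1}])$, violating~$\mathrm{\mathbf{(c1)}}$. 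A connectedness argument (homotoping $\partial S(\{v_\alpha\})$ to $\partial D_\alpha$ through Jordan curves inside $\operatorname{\mathbb{FL}}_\alpha$) then upgrades the boundary inclusion to all of $D_\alpha$. With this in hand the lemma is immediate: if $p\in D_\alpha\cap D_\beta$ lay in no $\mathbb D_\eta$ for $\eta\ne\alpha,\beta$, then---since $v_\beta$ is not a neighbor of $v_\alpha$---$p\notin\bigcup_\mu\mathbb D_\mu$, hence $p\in S(\{v_\alpha\})$; symmetrically $p\in S(\{v_\beta\})$; but non-adjacent vertices have disjoint open stars.

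Your ``transport'' step is where the gap lies. Knowing that the geodesic from $z_\alpha$ to $p$ hits $\partial\triangle_0$ at some $q\in D_\mu\cap D_\alpha$ gives no direct control over $p$: Lemma~\ref{L-2-6} requires three mutually intersecting disks together with a containment $D_i\cap D_j\subset D_k$, and nothing in your setup produces such a configuration at $p$. Moreover, the disk that actually covers $p$ need not be any of $D_i,D_j,D_k$---when $D_\alpha$ protrudes into a triangle not incident to $v_\alpha$, the flower containment shows $p\in\mathbb D_\mu$ for some \emph{neighbor} $v_\mu$ of $v_\alpha$, which may lie nowhere near $\triangle_0$. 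Any completion of your chase through neighboring triangles would effectively have to rebuild the inclusion $D_\alpha\subset\operatorname{\mathbb{FL}}_\alpha$; that global claim is the organizing idea you are missing.
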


\begin{figure}[htbp]\centering
\includegraphics[width=0.57\textwidth]{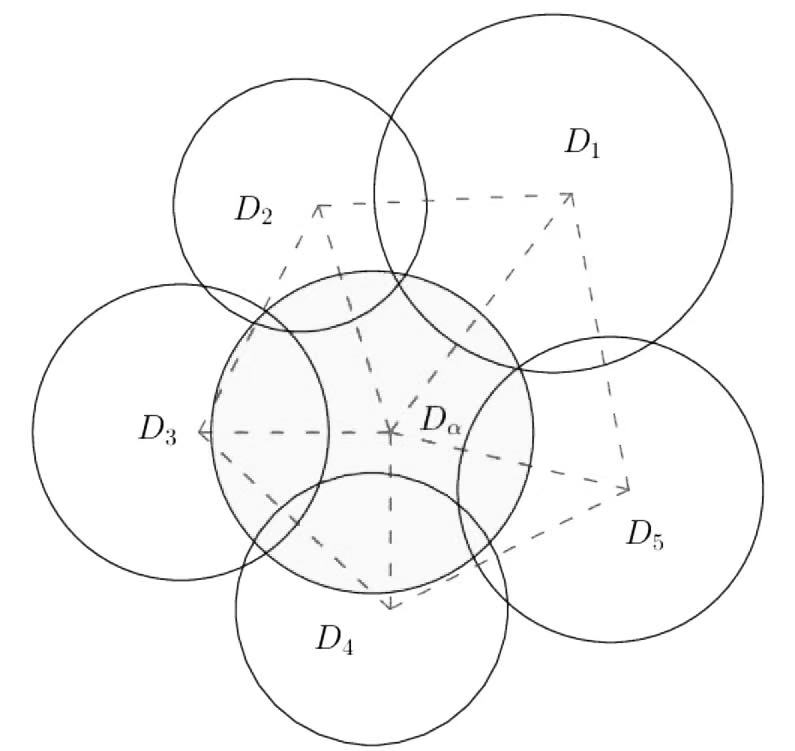}
\caption{A flower of disks}
\end{figure}

\begin{proof}
Let $S\big(\{v_\alpha\}\big)$ denote the union of \textbf{open} $d$-simplices $(d=0,1,2)$ of $\mathcal T$ incident to $v_\alpha$. Suppose
$v_{1},\cdots, v_{m}\in V$, in anticlockwise order, are all adjacent vertices of $v_\alpha$. We claim
\begin{equation}\label{E-3-1}
 D_\alpha\,\subset\,\big(\cup_{\mu=1}^m \mathbb D_{\mu}\big)\cup S\big(\{v_\alpha\}\big)\,:=\,\operatorname{\mathbb{FL}}_\alpha.
\end{equation}

We first observe  $\partial D_\alpha\subset \operatorname{\mathbb{FL}}_\alpha$.
Otherwise, there exists $\mu_0\in\{1,\cdots,m\}$ such that
\[
D_{\mu_0}\cap D_{\mu_0+1}\,\subset\, D_\alpha.
\]
By Lemma~\ref{L-2-6}, we get
\[
\Theta\big([v_{\mu_0},v_\alpha]\big)+\Theta\big([v_\alpha,v_{\mu_0+1}]\big)\, \geq\,  \pi+\Theta\big([v_{\mu_0},v_{\mu_0+1}]\big),
\]
which contradicts condition $\mathrm{\mathbf{(c1)}}$.

Since $\Theta>0$, it is easy to see $\partial S\big(\{v_\alpha\}\big)\subset \operatorname{\mathbb{FL}}_\alpha $. Writing the equations of the closed curves $\partial D_\alpha, \partial S\big(\{v_\alpha\}\big)$ with polar coordinates and considering their convex combination, we construct a family of closed Jordan regions $\{J_t\}_{0\leq t\leq 1}$ with the following properties:
 \[
J_0\,=\,\overline{S\big(\{v_\alpha\}\big)},\quad\, J_1\,=\,D_\alpha,\quad\, \partial J_t\,\subset\,\operatorname{\mathbb{FL}}_\alpha.
\]
Let $X$ be the set of $t\in[0,1]$ satisfying
\[
J_t\,\subset\,\operatorname{\mathbb{FL}}_\alpha.
\]
A routine analysis deduces that $X$ is a non-empty, open and closed subset of $[0,1]$. As a result, $X=[0,1]$. Taking $t=1$, we prove the claim.

We now assume, by contradiction, that there exists a point
$p_0\in D_\alpha\cap D_\beta$ which does not belong to any open disks of $\mathcal P$ except for $\mathbb D_\alpha, \mathbb D_\beta$. Consequently,
\[
p_0\notin \cup_{\mu=1}^m \ \mathbb D_{\mu}.
\]
Meanwhile, relation~\eqref{E-3-1} gives
 \[
 p_0 \,\in\, D_\alpha\ \subset\big(\cup_{\mu=1}^m \mathbb D_{\mu}\big)\cup S\big(\{v_\alpha\}\big).
\]
Hence
\[
p_0\, \in\,S\big(\{v_\alpha\}\big).
\]
Similarly, we obtain
\[
p_0\, \in\, S\big(\{v_\beta\}\big).
\]
That means $S\big(\{v_\alpha\}\big)\cap S\big(\{v_\beta\}\big)$ is non-empty, which occurs if and only if there exists an edge between $v_\alpha$ and $v_\beta$. This contradicts that $v_\alpha,v_\beta$ is a pair of non-adjacent vertices.
\end{proof}

%\begin{remark}
%In spite that several metric concepts are concerned in the above proof, the  property actually depends only on the projective structure of the  pattern $\mathcal P$.
%\end{remark}

\subsection{Topological degree}
In order to prove Theorem~\ref{T-3-1}, we need to demonstrate that any function $\Theta: E\to(0,\pi)$ satisfying conditions $\mathrm{\mathbf{(c1)}}-\mathrm{\mathbf{(c4)}}$ and $\mathrm{\mathbf{(m5)}}$ is in the image of the map $\mathcal{E}v$.
 Recall that
\[
\dim(M^\star_{IG})\,=\,\dim (Y)\,=\,|E|.
\]
To this end, we shall make the use of the topological degree theory. Compared with the continuity method used by Thurston~\cite[Chap.13]{Thurston}, this approach has the advantage of dealing with existence and rigidity in a separated manner.

The objective is to find a relatively compact open subset (i.e. an open subset whose closure is compact) $\Lambda\subset M^\star_{IG}$ and determine the degree $\deg(\mathcal{E}v,\Lambda,\Theta)$. Once we show
\[
\deg(\mathcal{E}v,\Lambda,\Theta)\,\neq\,0,
\]
a basic property (see Theorem~\ref{T-6-9}) of topological degree yields  there exists $(\mathbf{z},\mathbf{r})\in \Lambda$ such that $\mathcal{E}v(\mathbf{z},\mathbf{r})=\Theta$, which concludes Theorem~\ref{T-3-1}.

Let us compute the degree by deforming $\Theta$ to another value which is relatively easier to manipulate. Suppose $W_{m}$ is the set of functions satisfying conditions $\mathrm{\mathbf{(c1)}}-\mathrm{\mathbf{(c4)}}$ and $\mathrm{\mathbf{(m5)}}$. Note that one needs to prove nothing if $W_m$ is an empty set. So from now on we only consider the case that $W_{m}$ is non-empty. The following elementary fact indicates there is an ample scope for deformation.

\begin{lemma}\label{L-3-3}
If $W_m\neq \emptyset$, then $W_m\cap (0,\pi/2)^{|E|}\neq \emptyset$.  Furthermore, $W_m\cap (0,\pi/2)^{|E|}$ has positive measure in $(0,\pi)^{|E|}$.
\end{lemma}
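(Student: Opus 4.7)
The plan is to construct elements of $W_{m}\cap(0,\pi/2)^{|E|}$ by linearly interpolating any given $\Theta_{0}\in W_{m}$ toward the constant function $\pi/3\cdot\mathbf{1}$, and then perturbing slightly to obtain an open subset. Concretely, I would set $\Theta_{t}(e)=(1-t)\Theta_{0}(e)+t\pi/3$ for $t\in[0,1)$ and verify $\Theta_{t}\in W_{m}$ directly, using linearity of every defining condition. For $\mathrm{\mathbf{(c1)}}$, the difference RHS$-$LHS equals $(1-t)\bigl[\Theta_{0}(e_{k})+\pi-\Theta_{0}(e_{i})-\Theta_{0}(e_{j})\bigr]+2t\pi/3>0$. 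For $\mathrm{\mathbf{(c2)}}$, the sum is bounded by $(1-t)\pi+2t\pi/3=\pi-t\pi/3$. For $\mathrm{\mathbf{(c3)}}$ (which concerns only \emph{separating} $3$-circuits, not triangular faces), the strict inequality for $\Theta_{0}$ yields sum $<(1-t)\pi+t\pi=\pi$ whenever $t<1$. For $\mathrm{\mathbf{(c4)}}$, the sum is bounded by $2\pi-2t\pi/3<2\pi$. Finally $\mathrm{\mathbf{(m5)}}$ gives sum $\geq(1-t)\pi+t\pi=\pi$ since the constant $\pi/3$ saturates $\mathrm{\mathbf{(m5)}}$. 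Choosing $t\geq 3/4$ forces $\Theta_{t}(e)<\pi-2t\pi/3\leq\pi/2$, which produces the desired element and settles the non-emptiness claim.

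For the positive measure statement, I would fix $t_{0}\in(3/4,1)$ and set $\Theta^{\epsilon}=\Theta_{t_{0}}+\epsilon\mathbf{1}$ for a small $\epsilon>0$. The strict inequalities $\mathrm{\mathbf{(c1)}}$, $\mathrm{\mathbf{(c3)}}$, $\mathrm{\mathbf{(c4)}}$ persist under sufficiently small perturbation, and $\mathrm{\mathbf{(c2)}}$ survives because $\Theta_{t_{0}}$ already satisfies it with slack of at least $t_{0}\pi/3$. The crucial point is that $\mathrm{\mathbf{(m5)}}$ is upgraded to a strict inequality $\sum_{e\in\partial T}\Theta^{\epsilon}(e)\geq\pi+3\epsilon>\pi$. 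Continuity likewise preserves $0<\Theta^{\epsilon}(e)<\pi/2$. Since every defining inequality is now strict, an open ball around $\Theta^{\epsilon}$ in $\mathbb{R}^{|E|}$ lies inside $W_{m}\cap(0,\pi/2)^{|E|}$, which yields positive Lebesgue measure.

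The main tension that the construction navigates is that the most natural candidate, a constant $c\cdot\mathbf{1}$, cannot simultaneously satisfy the strict condition $\mathrm{\mathbf{(c3)}}$ (demanding $3c<\pi$) and the weak condition $\mathrm{\mathbf{(m5)}}$ (demanding $3c\geq\pi$). This is why the convex combination with $\Theta_{0}$ is needed in the regime $t<1$: the strict $\mathrm{\mathbf{(c3)}}$-gap of $\Theta_{0}$ is preserved, while the remaining linear data of $\pi/3\cdot\mathbf{1}$ are compatible with all other conditions. The observation that $\mathrm{\mathbf{(c3)}}$ governs only \emph{separating} $3$-circuits while $\mathrm{\mathbf{(m5)}}$ governs triangular faces, which are disjoint classes of $3$-cycles, is exactly what permits the uniform upward perturbation $\epsilon\mathbf{1}$ to strictly improve $\mathrm{\mathbf{(m5)}}$ without threatening $\mathrm{\mathbf{(c3)}}$.
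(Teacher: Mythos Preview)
Your proof is correct and follows essentially the same approach as the paper: interpolate linearly between the given $\Theta_{0}\in W_{m}$ and the constant $\pi/3\cdot\mathbf{1}$ (the paper writes this as $\Theta_{s}=(1-s)\psi+s\Theta$ with $s$ small, equivalent to your $t$ near $1$), and then obtain positive measure via a small upward perturbation (the paper uses the one-sided box $\{\theta:\Theta_{s_{0}}(e)<\theta(e)<\Theta_{s_{0}}(e)+\epsilon\}$ rather than your shift-then-ball, but the mechanism is identical). Your write-up is in fact more explicit than the paper's, which asserts $\Theta_{s_{0}}\in W_{m}\cap(0,\pi/2)^{|E|}$ without spelling out the verification of each condition.
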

\begin{proof}
Choose $\Theta\in W_m$. For $s\in(0,1]$, set
\[
\Theta_s\,=\,(1-s)\psi+s\Theta,
\]
where
\[
\psi\,=\,(\pi/3,\pi/3,\cdots,\pi/3).
\]
Taking $s_0$ sufficiently close to zero, we have $\Theta_{s_0}\in W_m\cap (0,\pi/2)^{|E|}$, which implies
\[
W_m\cap (0,\pi/2)^{|E|}\,\neq\,\emptyset.
\]
For $\epsilon>0$ , let $U_\epsilon$ consist of vectors $\theta\in \mathbb R^{|E|}$ satisfying
\[
\Theta_{s_0}(e)\,<\,\theta(e)\,<\,\Theta_{s_0}(e)+\epsilon,\quad \forall\,e\in E.
\]
When $\epsilon$ is sufficiently small, it is easy to see $U_\epsilon\subset W_m\cap (0,\pi/2)^{|E|}$. Thus $W_m\cap(0,\pi/2)^{|E|}$ has positive measure in $(0,\pi)^{|E|}$.
\end{proof}

Applying Sard's Theorem, we can find a regular value $\widetilde{\Theta}\in U_\epsilon\subset W_m\cap(0,\pi/2)^{|E|}$ of the map $\mathcal{E}v$. For $t\in[0,1]$,  set
\[
\Theta(t)\,=\,(1-t)\widetilde{\Theta}+t\Theta.
\]
Evidently, each $\Theta(t)$ satisfies conditions $\mathrm{\mathbf{(c1)}}-\mathrm{\mathbf{(c4)}}$ and $\mathrm{\mathbf{(m5)}}$. That means $\big\{\Theta(t)\big\}_{0\leq t\leq 1}$ form a continuous curve $\gamma$ in $W_m$. We will need the following technical result.

\begin{lemma}\label{L-3-4}
There exists a relatively compact open subset $\Lambda\subset M^\star_{IG}$ such that
\[
\mathcal{E}v^{-1}(\gamma)\,\subset\,\Lambda.
\]
\end{lemma}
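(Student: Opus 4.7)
The plan is to argue by contradiction: suppose no such $\Lambda$ exists. Then one can find a sequence $(\mathbf z_n, \mathbf r_n) \in M^\star_{IG}$ with $\mathcal{E}v(\mathbf z_n, \mathbf r_n) = \Theta(t_n) \in \gamma$ that leaves every compact subset of $M^\star_{IG}$. By compactness of $\hat{\mathbb C}^{|V|} \times [0,\pi]^{|V|} \times [0,1]$ we may pass to a subsequence with $t_n \to t_\infty \in [0,1]$ and $(\mathbf z_n, \mathbf r_n) \to (\mathbf z_\infty, \mathbf r_\infty)$, producing a limit pattern $\mathcal P_\infty$. The normalization conditions $\langle \mathbf{x5} \rangle$ and $\langle \mathbf{x6} \rangle$ pass to the limit automatically, so the escape must come from the failure of one of $\langle \mathbf{x1} \rangle$--$\langle \mathbf{x4} \rangle$ at $(\mathbf z_\infty, \mathbf r_\infty)$. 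My task is to rule out each such failure; then $\mathcal{E}v^{-1}(\gamma)$ is precompact in $M^\star_{IG}$ and any relatively compact open neighborhood $\Lambda$ (existing by normality of the manifold) does the job.

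The first step is to show $r_i^\infty \in (0,\pi)$ for every vertex $v_i$. Assume instead $r_i^\infty = 0$ for some $i$, and look at the flower of triangles of $\mathcal T$ incident to $v_i$: the central inner angles at $v_i$ sum to $2\pi$ in every geodesic triangulation $\mathcal T(\mathbf z_n, \mathbf r_n)$, and the limit formulas of Lemma~\ref{L-2-5} (a)--(c) express this sum solely in terms of $\Theta(t_\infty)$-values on the link cycle of $v_i$. Since $|V|>4$, this link is a simple closed curve that is not the boundary of a triangular face of $\mathcal T$, so Lemma~\ref{L-2-1} (or condition $\mathrm{\mathbf{(c3)}}$ in the $3$-edge case, noting that the link separates $v_i$ from the remaining vertices) forces the total $\Theta$-weight to be too small to equal $2\pi$, a contradiction. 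The case $r_i^\infty = \pi$ is symmetric after replacing $D_i$ by its complement. Once all radii lie in a compact subinterval of $(0,\pi)$ and the inversive distances $I(e,\cdot)$ stay inside a compact subinterval of $(-1,1)$ (using compactness of $\gamma$ and continuity of $\arccos$), centers of adjacent disks remain bounded apart; the triangulations $\mathcal T(\mathbf z_n, \mathbf r_n)$ then converge and $\langle \mathbf{x1} \rangle$, $\langle \mathbf{x2} \rangle$ survive.

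The main obstacle is property $\langle \mathbf{x3} \rangle$. Suppose two combinatorially non-adjacent disks $D_\alpha^\infty, D_\beta^\infty$ meet in $\mathcal P_\infty$. Applying Lemma~\ref{L-3-2} in each $\mathcal P_n$, every point of $D_\alpha^n \cap D_\beta^n$ lies in some open disk $\mathbb{D}_{\eta_n}^n$ with $\eta_n \notin \{\alpha,\beta\}$; finiteness of $V$ lets us pass to a subsequence with $\eta_n \equiv \eta$, so $D_\alpha^\infty \cap D_\beta^\infty \subset D_\eta^\infty$ at the limit. Lemma~\ref{L-2-6} then yields
\[
\Theta(t_\infty)\big([v_\alpha,v_\eta]\big) + \Theta(t_\infty)\big([v_\eta,v_\beta]\big) \;\geq\; \pi + \Theta_{\alpha\beta}^\infty \;\geq\; \pi,
\]
where $\Theta_{\alpha\beta}^\infty \in [0,\pi)$ denotes the limiting exterior angle between $D_\alpha^\infty$ and $D_\beta^\infty$. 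The arc $[v_\alpha,v_\eta],[v_\eta,v_\beta]$ is homologically non-adjacent because $v_\alpha$ and $v_\beta$ are not joined by an edge of $\mathcal T$, so condition $\mathrm{\mathbf{(c2)}}$ gives the reverse inequality $\leq \pi$. In the strict case this is immediate; in the equality case we must have $\Theta_{\alpha\beta}^\infty = 0$ and, by the equality clause of Lemma~\ref{L-2-6}, the three boundaries $\partial D_\alpha^\infty, \partial D_\beta^\infty, \partial D_\eta^\infty$ share a common tangent point. This tangential configuration is rigid enough to be excluded by either the strict-inequality refinement of $\mathrm{\mathbf{(c2)}}$ (when $\mathcal T$ is the triangular bipyramid) or by propagating the tangency to a neighboring simple closed curve and applying Lemma~\ref{L-2-1} there.

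Finally, $\langle \mathbf{x4} \rangle$ also survives: if $\bigcup_{v \in A} D_v^\infty = \hat{\mathbb C}$ for a proper $A \subsetneq V$ while every $\mathcal P_n$ is irreducible, the complementary region in $\mathcal P_n$ must collapse, and its boundary -- a collection of simple closed curves built from arcs of $\partial D_v$, $v \in A$, whose combinatorics are recorded by a cycle in $\mathcal T$ -- produces a limiting angle-sum identity that violates Lemma~\ref{L-2-1}. All four potential failures being excluded, the sequence cannot escape $M^\star_{IG}$, and the lemma follows. The principal difficulty, as foreshadowed in the introduction, is the third step: reconciling Lemma~\ref{L-2-6} and condition $\mathrm{\mathbf{(c2)}}$ in their equality cases.
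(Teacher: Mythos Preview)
Your overall strategy---extract a convergent subsequence and show the limit still lies in $M^\star_{IG}$---matches the paper's, but two steps contain genuine gaps.

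\textbf{The radius-degeneration step.} You pick a single vertex $v_i$ with $r_i^\infty=0$ and claim that Lemma~\ref{L-2-5} expresses the cone angle at $v_i$ purely in terms of $\Theta(t_\infty)$-values on its link. This is false once a \emph{neighbor} of $v_i$ also has radius tending to zero: formula~\eqref{E-2-5b} only gives the limit of the \emph{sum} $\alpha_i+\alpha_j$, not of $\alpha_i$ alone, so the individual cone angle at $v_i$ is not determined by the $\Theta$-data. The paper deals with this by summing the apex curvatures over the \emph{entire} degeneration set $V_0=\{v:r_v^\infty=0\}$ (Proposition~\ref{P-3-6}); that total is well defined and equals $2\pi\chi(S(V_0))$ minus a link contribution, and only then can Lemma~\ref{L-2-1} be invoked on the boundary cycle of a simply-connected component of $S(V_0)$. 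A separate argument, using the normalization $r_a=r_b=r_c=\pi/4$, is needed when that boundary cycle encloses only one or two triangles. Your treatment of $r_i^\infty=\pi$ (``symmetric after replacing $D_i$ by its complement'') is also too quick; the paper instead uses irreducibility of $\mathcal P_n$ and the fixed disks $D_a,D_b,D_c$ directly.

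\textbf{The non-adjacent-disk step.} You write ``Applying Lemma~\ref{L-3-2} in each $\mathcal P_n$, every point of $D_\alpha^n\cap D_\beta^n$ \dots''---but $D_\alpha^n\cap D_\beta^n=\emptyset$ for every $n$, since $(\mathbf z_n,\mathbf r_n)\in M_G$. Lemma~\ref{L-3-2} must be applied to the \emph{limit} pattern $\mathcal P_\infty$ (which is legitimate once you have shown $(\mathbf z_\infty,\mathbf r_\infty)\in M_{\mathcal T}$). Even then, the vertex $v_\eta$ produced by Lemma~\ref{L-3-2} need not be adjacent to both $v_\alpha$ and $v_\beta$, so you cannot immediately invoke $\Theta([v_\alpha,v_\eta])+\Theta([v_\eta,v_\beta])$; and Lemma~\ref{L-2-6} requires $D_\alpha\cap D_\beta\subset D_\eta$, not merely that a single point lies in $\mathbb D_\eta$. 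The paper separates the cases where $D_{\alpha,\infty}\cap D_{\beta,\infty}$ has interior, is a single point, or equals the common boundary circle, and treats each with a different argument. Your handling of the equality case of~$\mathrm{\mathbf{(c2)}}$ is also only a sketch; see case~(iii) of Step~3 in the paper for what is actually needed.
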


It suffices to show that any sequence of configurations $\big\{(\mathbf{z}_n,\mathbf{r}_n)\big\}\subset \mathcal{E}v^{-1}(\gamma)$ contains a convergent subsequence in $M^\star_{IG}$. Note that each  $(\mathbf{z}_n,\mathbf{r}_n)$ gives a normalized \textbf{irreducible} circle pattern $\mathcal P_n=\{D_{i,n}\}_{i=1}^{|V|}$ on the Riemann sphere $\hat{\mathbb C}$ realizing the data $(\mathcal T,\Theta_n)$, where
\[
\Theta_n\,=\,(1-t_n)\widetilde{\Theta}+t_n\Theta
\]
for some $t_n\in[0,1]$. Following Thurston~\cite{Thurston}, we define the apex curvature $K_{i,n}$ to be
\[
K_{i,n}\,=\,2\pi -\sigma_{i,n}.
\]
Here $\sigma_{i,n}$ denotes the cone angle at $v_i$, which is equal to the sum of inner angles at $v_i$ for all triangles of
$\mathcal T(\mathbf{z}_n,\mathbf{r}_n)$ incident to $v_i$. For $i=1,2,\cdots,|V|$, we have
\begin{equation}\label{E-3-2}
K_{i,n}\,=\,0.
\end{equation}

Before the proof of Lemma~\ref{L-3-4}, let us show the following proposition.
\begin{proposition}\label{P-3-5}
There is a subsequence of $\big\{(\mathbf{z}_{n},\mathbf{r}_{n})\big\}$  converging to a point $(\mathbf{z}_\infty,\mathbf{r}_\infty)\in M$.
\end{proposition}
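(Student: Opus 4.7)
The plan is to extract a subsequential limit by compactness and then rule out boundary-value radii via the apex-curvature identities together with Lemmas~\ref{L-2-1} and~\ref{L-2-5}. By compactness of $\hat{\mathbb C}^{|V|}\times[0,\pi]^{|V|}$, one passes to a subsequence with $z_{i,n}\to z_i^\infty\in\hat{\mathbb C}$ and $r_{i,n}\to r_i^\infty\in[0,\pi]$, and the task is to show $r_i^\infty\in(0,\pi)$. The case $A_\pi:=\{v:r_v^\infty=\pi\}\neq\emptyset$ reduces to $A_0:=\{v:r_v^\infty=0\}\neq\emptyset$: if $r_v^\infty=\pi$ then $D_v^\infty=\hat{\mathbb C}$, so every disk $D_u$ non-adjacent to $v$ (which is disjoint from $D_v$ at every finite $n$ by property $\mathrm{\langle\mathbf{x3}\rangle}$) must fit inside the shrinking complement of $D_v$, forcing $r_u^\infty=0$; the residual possibility that $v$ be adjacent to every other vertex is treated directly by the single-vertex cone-angle argument below (using that the spherical law of cosines yields $\alpha_v\to\pi-\Theta(e_T)$ in any triangle at $v$ in either degenerate case $r_v\to 0$ or $r_v\to\pi$). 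Hence it suffices to assume $A_0\neq\emptyset$; the normalization $\mathrm{\langle\mathbf{x6}\rangle}$ gives $A_0\subsetneq V$.

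Further subsequence extraction, combined with the inversive-distance formula (which forces adjacent disks with radii $r_i,r_j\to 0$ to have centers at distance of order $\sqrt{r_ir_j}$), arranges that each connected component $A$ of $A_0$ in the subgraph of $A_0$-$A_0$ edges collapses to a single limit point. Fix such a component $A$ and let $R_A=\bigcup_{v\in A,\,T\ni v}T$; for clarity I describe the case where $R_A$ is a topological disk bounded by a simple closed curve $\Gamma$ of $k$ edges disjoint from $A$, the case of multiple boundary components being handled by applying the same argument to each. Partition the triangles of $R_A$ as $F_1\cup F_2\cup F_3$ by the number of their vertices in $A$. Summing the apex-curvature identity $\sigma_{v,n}=2\pi$ over $v\in A$ and passing to the limit via Lemma~\ref{L-2-5} yields
\[
2\pi|A|\;=\;\pi\bigl(|F_1|+|F_2|+|F_3|\bigr)\;-\;\sum_{T\in F_3}\Theta(e_T),
\]
where $e_T$ denotes the edge of $T$ opposite its unique $A$-vertex. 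Combining this with the Euler identity $|F_1|+|F_2|+|F_3|=2|A|+k-2$ for the disk $R_A$ produces the key equation
\[
\sum_{T\in F_3}\Theta(e_T)\;=\;(k-2)\pi.\qquad(\ast)
\]

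The contradiction is extracted by splitting the left-hand side of $(\ast)$ as $\sum_{e\in\Gamma}\Theta(e)+2\sum_{e\in F_3^\circ}\Theta(e)$, where $F_3^\circ$ comprises edges interior to $R_A$ each shared by two $F_3$-triangles, and then applying Lemma~\ref{L-2-1} to $\Gamma$. That lemma gives $\sum_{e\in\Gamma}\Theta(e)\leq(k-2)\pi$, strict unless $\Gamma$ bounds either a single triangle of $\mathcal T$ (excluded since $A\neq\emptyset$) or two adjacent triangles, in which latter case $|V|=5$ and $\mathcal T$ is the triangular bipyramid so that condition $\mathrm{\mathbf{(c2)}}$ restores the strict inequality. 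When $|A|=1$ the interior sum vanishes and $(\ast)$ is immediately contradicted. The main technical obstacle is $|A|\geq 2$: each interior edge $[v_B^1,v_B^2]\in F_3^\circ$ sits inside a 4-cycle $[v_A,v_B^1,v_A',v_B^2]$ bounding two adjacent triangles, and Lemma~\ref{L-2-1} together with condition $\mathrm{\mathbf{(c2)}}$ applied to the homologically non-adjacent arcs contained in such 4-cycles lets one control these interior contributions and propagate the strict deficit on $\Gamma$ throughout the cluster, contradicting $(\ast)$.
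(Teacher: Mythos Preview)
Your overall strategy---extract a subsequential limit, rule out degenerate radii via the apex-curvature limits of Lemma~\ref{L-2-5} together with Lemma~\ref{L-2-1}---matches the paper's, but the endgame has a genuine gap. When your boundary cycle $\Gamma$ bounds (on the side \emph{away} from $R_A$) at most two triangles of $\mathcal T$, Lemma~\ref{L-2-1} yields at best $\sum_{e\in\Gamma}\Theta(e)\le(k-2)\pi$ with no strictness, and your claim that this forces $|V|=5$ and the triangular bipyramid is incorrect: that conclusion requires $|A|=1$ and no non-$A$ interior vertices in $R_A$, whereas the complement of $R_A$ can perfectly well be one or two triangles when $|A|\ge2$ (and when it is a single face of $\mathcal T$, Lemma~\ref{L-2-1} does not even apply). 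Even in the genuine bipyramid case your argument fails: the strict clause in~$\mathrm{\mathbf{(c2)}}$ guarantees only that \emph{some} homologically non-adjacent arc is strict, not the particular two arcs making up $\Gamma$, so $(\ast)$ can survive. The paper handles this case by an entirely different device: it singles out the component $J$ of $\hat{\mathbb C}\setminus\Gamma$ containing the marked triangle $\triangle^\star$, applies Lemma~\ref{L-2-1} only to the simple curve $\partial J$ (bounding the remaining link terms trivially by $\pi$ each), and when $J$ itself has at most two triangles it exploits the normalization $r_a=r_b=r_c=\pi/4$ geometrically---showing that two distinct intersection points $q_a,q_b$ of the boundary circles $\partial D_a,\partial D_b,\partial D_c$ would have to coalesce, which is impossible. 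This geometric step is the missing idea in your proof.

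There are several secondary issues. Your indices are swapped: the triangles with a unique $A$-vertex are $F_1$, not $F_3$, so the sum in $(\ast)$ is over $F_1$. Your treatment of the interior edges (``Lemma~\ref{L-2-1} together with~$\mathrm{\mathbf{(c2)}}$ \ldots lets one control these interior contributions and propagate the strict deficit'') is not a proof: the $4$-cycle and arc inequalities you invoke bound the four \emph{outer} edges of the kite $[v_A,v_B^1,v_{A'},v_B^2]$, not the diagonal $\Theta([v_B^1,v_B^2])$ you actually need, and there is no evident mechanism forcing $2\sum_{F_1^\circ}\Theta$ below the slack in Lemma~\ref{L-2-1}. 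The paper's trick of looking only at $\partial J$ sidesteps this difficulty entirely. Your Euler identity $|F_1|+|F_2|+|F_3|=2|A|+k-2$ tacitly assumes $R_A$ has no interior vertices outside $A$, which need not hold; the paper avoids this by computing $\chi(S(V_0))$ directly from the open-star complex (Proposition~\ref{P-3-6}). Finally, ``multiple boundary components handled by the same argument'' is wrong---there the Euler characteristic drops to $\le0$, which the paper uses for an immediate contradiction rather than a repetition of the disk argument.
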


\begin{proof}
First we can extract a subsequence $\big\{(\mathbf{z}_{n_k},\mathbf{r}_{n_k})\big\}$ of $\big\{(\mathbf{z}_{n},\mathbf{r}_{n})\}$ convergent to some point $(\mathbf{z}_\infty,\mathbf{r}_\infty)\in\hat{\mathbb C}^{|V|}\times[0,\pi]^{|V|}$. Let $\mathcal P_\infty=\{D_{i,\infty}\}_{i=1}^{|V|}$ denote the pattern given by $(\mathbf{z}_\infty,\mathbf{r}_\infty)$.
Assume the proposition is not true. Then there exists $i_0\in \{1,2,\cdots,|V|\}$ satisfying
\[
r_{i_0,n_k}\,\to\,\pi\quad\;\mathrm{\text{or}}\quad\;r_{i_0,n_k}\,\to\,0.
\]

In the former case,
$\hat{\mathbb C}\setminus \mathbb D_{i_0,n_k}$ tends to a point
$x\in \hat{\mathbb C}$ in the sense of Hausdorff convergence. If $x$ is an interior point of
$D_{a,\infty}\cup D_{b,\infty}\cup D_{c,\infty}$, for $n_k$ sufficiently large we have
\[
 D_{a,n_k}\cup D_{b,n_k}\cup D_{c,n_k}\cup D_{i_0,n_k}\,=\,\hat{\mathbb C}.
\]
Since $\mathcal T$ possesses more than four vertices, this contradicts that  $\mathcal P_{n_k}$ is \textbf{irreducible}. If $x$ is not an interior point of $D_{a,\infty}\cup D_{b,\infty}\cup D_{c,\infty}$, then one of the disks $D_{a,n_k},D_{b,n_k},D_{c,n_k}$ is covered by $\mathbb D_{i_0,n_k}$ when $n_k$ is sufficiently large. Without loss of generality, assume $D_{a,n_k}\subset \mathbb D_{i_0,n_k}$. That means $D_{a,n_k}\cap D_{i_0,n_k}=D_{a,n_k}$.
To obtain a contradiction, we claim $D_{a,n_k}\cap D_{i_0,n_k}$ is actually a proper subset of $D_{a,n_k}$. Indeed, if $v_{i_0}$ is not an adjacent vertex of $v_a$,  the claim follows straightforwardly from the assumption that $D_{a,n_k},D_{i_0,n_k}$ are disjoint; If $v_{i_0}$ is an adjacent vertex of $v_a$, the condition $\Theta_{n_k}([v_{i_0},v_a])\in(0,\pi)$ also implies $D_{a,n_k}\cap D_{i_0,n_k}\subsetneq D_{a,n_k}$.

In the latter case, by taking a subsequence, if necessary, we assume  $t_{n_k}$ converges to some $t_\infty\in[0,1]$.  Let $V_0\subset V$ be the set of vertices $v_i\in V$ for which $r_{i,n_k}\to 0$. Then $V_0$ is a non-empty subset of $V\setminus\{v_a,v_b,v_c\}$. For $A\subset V_0$, let $S(A)$ denote the union of \textbf{open} $d$-simplices $(d=0,1,2)$ of $\mathcal T$ incident to at least one vertex in $A$ and let $Lk(A)$ denote the set of pairs $(e,u)$ of an edge $e\in E$ and a vertex $u\in V$ with the following properties:
\[
(i)\, \partial e\cap V_0=\emptyset;\quad(ii)\,u\in A ;\quad (iii)\, e\; \mathrm{\text{and}}\;u\; \mathrm{\text{form  a triangle of} }\; \mathcal T.
\]
The following Proposition~\ref{P-3-6} indicates
\[
\sum\nolimits_{v_i\in V_0}K_{i,n_k}\,\to\, -\sum\nolimits_{(e,u)\in Lk(V_0)}\big[\pi-(1-t_\infty)\widetilde{\Theta}(e)-t_\infty\Theta(e)\big]+2\pi\chi\big(S(V_0)\big).
\]
Together with~\eqref{E-3-2}, we obtain
\begin{equation}\label{E-3-3}
0\,=\,-\sum\nolimits_{(e,u)\in Lk(V_0)}\big[\pi-(1-t_\infty)\widetilde{\Theta}(e)-t_\infty\Theta(e)\big]
+2\pi\chi\big(S(V_0)\big).
\end{equation}
Let $\{S_\tau\}_{\tau=1}^N$ be the set of connected components of $S(V_0)$ and let $A_\tau\subset V_0$ be the subset of vertices covered by $S_\tau$. Then
\[
\chi\big(S(V_0)\big)\,=\,\sum\nolimits_{\tau=1}^N\chi(S_\tau),
\quad Lk(V_0)\,=\,\cup_{\tau=1}^N Lk(A_\tau).
\]
Taking into consideration~\eqref{E-3-3}, we assert there exists  $\tau_0\in\{1,\cdots,N\}$ satisfying
\begin{equation}\label{E-3-4}
-\sum\nolimits_{(e,u)\in Lk(A_{\tau_0})}\big[\pi-(1-t_\infty)\widetilde{\Theta}(e)-t_\infty\Theta(e)\big]
+2\pi\chi(S_{\tau_0})
\,\geq\,0.
\end{equation}

In case that $S_{\tau_0}$ is $h$-connected such that $h\geq 2$, we get $\chi(S_{\tau_0})=2-h\leq 0$, which yields
\[
-\sum\nolimits_{(e,u)\in Lk(A_{\tau_0})}\big[\pi-(1-t_\infty)\widetilde{\Theta}(e)-t_\infty\Theta(e)\big]
+2\pi\chi(S_{\tau_0})\,<\,0.
\]
This contradicts~\eqref{E-3-4}.

In case that $S_{\tau_0}$ is simply connected, then $\chi(S_{\tau_0})=1$. Writing $Lk(A_{\tau_0})=\big\{(e_\mu,u_\mu)\big\}_{\mu=1}^k$, we reduce inequality~\eqref{E-3-4} to
\begin{equation}\label{E-3-5}
\sum\nolimits_{\mu=1}^k (1-t_\infty)\widetilde{\Theta}(e_\mu)+\sum\nolimits_{\mu=1}^k t_\infty\Theta(e_\mu)\,\geq \,(k-2)\pi.
\end{equation}
Note that $e_1,\cdots,e_k$ form a (possibly not simple) closed curve $\Gamma$ bounding the surface $S_{\tau_0}$. Apparently, $k\geq 3$.  Let $\mathbb J$ denote the connected component of the complement $\hat{\mathbb C}\setminus\Gamma$ such that $\operatorname{Int}(\triangle^\star)\subset \mathbb J$, where $\operatorname{Int}(\triangle^\star)$ is the interior of the marked triangle $\triangle^\star$.

Set $J=\mathbb J\cup \partial \mathbb J$. If $J$ includes more than two triangles of $\mathcal T$, Lemma~\ref{L-2-1} indicates
\[
\sum\nolimits_{\mu=1}^{k_0} \Theta(e_\mu)\,<\,(k_0-2)\pi,
\]
where $e_1,\cdots,e_{k_0}$ form the simple closed curve bounding $\mathbb J$. Hence
\[
\sum\nolimits_{\mu=1}^{k} \Theta(e_\mu)\,=\,\sum\nolimits_{\mu=1}^{k_0} \Theta(e_\mu)+\sum\nolimits_{\mu=k_0+1}^{k} \Theta(e_\mu)\,<\,(k_0-2)\pi+(k-k_0)\pi\,=\,(k-2)\pi.
\]
Recall that $\widetilde{\Theta}\in W_m\cap(0,\pi/2)^{|E|}$. We derive
\[
\sum\nolimits_{\mu=1}^k (1-t_\infty)\widetilde{\Theta}(e_\mu)+\sum\nolimits_{\mu=1}^k t_\infty\Theta(e_\mu)\,<\,(1-t_\infty)(k-2)\pi+t_\infty(k-2)\pi\,=\,(k-2)\pi,
\]
which contradicts~\eqref{E-3-5}.

If $J$ includes at most two triangles of $\mathcal T$, then $J=\triangle^\star$ or $J$ is the union of two adjacent triangles such that one of them equal to $\triangle^\star$. No matter which case occurs, among $e_a,e_b,e_c$ there are at least two edges forming a part of $\partial \mathbb J$. Without loss of generality, assume $e_a\subset \partial \mathbb J$ and $e_b\subset \partial \mathbb J$. We then find vertices $v_{i_a},v_{i_b}$ such that $(e_a,v_{i_a}),(e_b,v_{i_b})\in Lk(A_{\tau_0})$. As in Figure~\ref{F-7}, define $q_{a,n_k}=\partial D_{b,n_k}\cap\partial D_{c,n_k}\cap D_{i_a}$  and $q_{b,n_k}=\partial D_{a,n_k}\cap\partial D_{c,n_k}\cap D_{i_b}$. Clearly,
\[
d(q_{a,n_k},z_{i_a,n_k})\,\leq\,r_{i_a,n_k},\quad\;d(q_{b,n_k},z_{i_b,n_k})\,\leq\,r_{i_b,n_k}.
\]
Noting that $v_{i_a},v_{i_b}\in A_{\tau_0}\subset V_0$, we have $r_{i_a,n_k}\to 0$ and $r_{i_b,n_k}\to 0$, which indicates
\[
d(q_{a,n_k},z_{i_a,n_k})\,\to\,0,\quad\; d(q_{b,n_k},z_{i_b,n_k})\,\to\,0.
\]
%Similarly,
%\[
%d(p_{b,n_k},z_{i_b,n_k})\,\to\,0.
%\]
Because $S_{\tau_0}$ is connected, there is a sequence of vertices $v_{\zeta_1}=v_{i_a},v_{\zeta_2},\cdots,v_{\zeta_m}=v_{i_b}\in A_{\tau_0}$ such that $v_{\zeta_\varrho}$ and $v_{\zeta_{\varrho+1}}$ are adjacent. It is easy to see
\[
d(z_{i_a,n_k},z_{i_b,n_k})\,\to\,0.
\]
The above formulas together yield
\[
d(q_{a,n_k},q_{b,n_k})\,\to\,0.
\]
Namely,
\[
q_{a,\infty}\,=\,q_{b,\infty}.
\]
However, under normalization condition
$\mathrm{\mathbf{\langle x6\rangle}}$, a routine computation implies
\[
d(q_{a,\infty},q_{b,\infty})\,>\,0.
\]
It is a contradiction. We thus finish the proof.
\end{proof}

\begin{figure}[htbp]\centering
\includegraphics[width=0.64\textwidth]{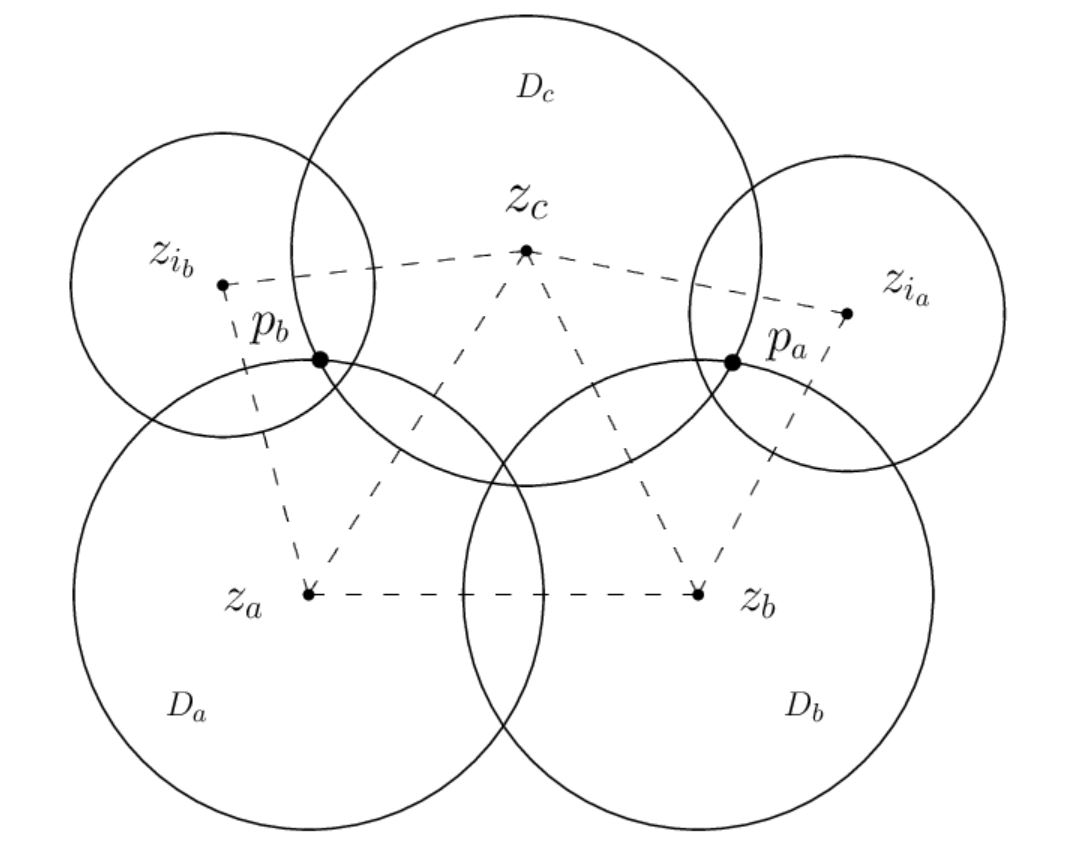}
\caption{Some marked points}\label{F-7}
\end{figure}

The following proposition, required in the above proof, synthesizes the information on apex curvatures as some disks degenerate to points. It is worth mentioning that the Euclidean version of this result has appeared in the work of Thurston~\cite[Chap. 13]{Thurston}.

\begin{proposition}\label{P-3-6}
Let $S(V_0)$ and $Lk(V_0)$ be as above. Then
\[
\sum_{v_i\in V_0}K_{i,n_k}\,\to\,
 -\sum_{(e,u)\in Lk(V_0)}\big[\pi-(1-t_\infty)\widetilde{\Theta}(e)-t_\infty\Theta(e)\big]+2\pi\chi\big(S(V_0)\big).
\]
\end{proposition}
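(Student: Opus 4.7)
The plan is to reorganize $\sum_{v_i\in V_0}K_{i,n_k}$ as a sum over triangles of $\mathcal T$ incident to $V_0$, pass to the limit triangle-by-triangle using Lemma~\ref{L-2-5}, and then identify the residual combinatorial term with $2\pi\chi(S(V_0))$.

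Partition $F^{V_0}:=\{\triangle\in F:\triangle\cap V_0\neq\emptyset\}$ as $F_1\sqcup F_2\sqcup F_3$, where $\triangle\in F_s$ has exactly $s$ vertices in $V_0$; since $\mathcal T(\mathbf z_{n_k},\mathbf r_{n_k})$ is isotopic to $\mathcal T$, this partition depends only on $\mathcal T$. Writing $K_{i,n_k}=2\pi-\sigma_{i,n_k}$ and grouping the cone-angle sum by the containing triangle yields
\[
\sum_{v_i\in V_0}K_{i,n_k}=2\pi|V_0|-\sum_{\triangle\in F_1}\alpha^\triangle_{V_0}-\sum_{\triangle\in F_2}(\alpha^\triangle_1+\alpha^\triangle_2)-\sum_{\triangle\in F_3}(\alpha^\triangle_1+\alpha^\triangle_2+\alpha^\triangle_3),
\]
where each $\alpha^\triangle_j$ denotes an inner angle of $\triangle$ at a $V_0$-vertex.

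To pass to the limit termwise, observe that the case $r_{i,n_k}\to\pi$ was already excluded in the earlier part of Proposition~\ref{P-3-5}, so for $v_i\notin V_0$ the radius $r_{i,n_k}$ converges to a value in $(0,\pi)$, while $r_{u,n_k}\to 0$ for $u\in V_0$ by definition of $V_0$. Setting $\Theta_\infty(e):=(1-t_\infty)\widetilde\Theta(e)+t_\infty\Theta(e)$ and applying Lemma~\ref{L-2-5}: for $\triangle\in F_1$ with $V_0$-vertex $u$ and opposite edge $e$, \eqref{E-2-5a} gives $\alpha^\triangle_u\to\pi-\Theta_\infty(e)$; for $\triangle\in F_2$, \eqref{E-2-5b} gives $\alpha^\triangle_1+\alpha^\triangle_2\to\pi$; for $\triangle\in F_3$, \eqref{E-2-5c} gives $\alpha^\triangle_1+\alpha^\triangle_2+\alpha^\triangle_3\to\pi$. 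The assignment $\triangle\mapsto(e_\triangle,u_\triangle)$ (opposite edge, unique $V_0$-vertex) is a bijection $F_1\to Lk(V_0)$, so summing produces
\[
\sum_{v_i\in V_0}K_{i,n_k}\;\longrightarrow\;-\sum_{(e,u)\in Lk(V_0)}[\pi-\Theta_\infty(e)]+2\pi|V_0|-\pi(|F_2|+|F_3|).
\]

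It remains to match $2\pi|V_0|-\pi(|F_2|+|F_3|)$ with $2\pi\chi(S(V_0))$. Since $S(V_0)$ is the union of open simplices incident to $V_0$, its Euler characteristic equals the alternating count
\[
\chi(S(V_0))=|V_0|-(|E_1|+|E_2|)+(|F_1|+|F_2|+|F_3|),
\]
where $E_j$ is the set of edges having exactly $j$ endpoints in $V_0$. A double count of edge-triangle incidences (each $F_1$-triangle has two $E_1$-edges, each $F_2$-triangle has two $E_1$-edges and one $E_2$-edge, each $F_3$-triangle has three $E_2$-edges, while each $E_1$- or $E_2$-edge lies in two $F^{V_0}$-triangles) gives $|E_1|=|F_1|+|F_2|$ and $2|E_2|=|F_2|+3|F_3|$. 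Substituting produces $\chi(S(V_0))=|V_0|-\tfrac12(|F_2|+|F_3|)$, closing the proof. The main subtlety is ensuring that Lemma~\ref{L-2-5}(b), (c) deliver the \emph{exact} constant $\pi$ with no residual $\Theta_\infty$-dependence, so that the $F_2$ and $F_3$ contributions condense cleanly into the Euler-characteristic term rather than leaking into the $Lk(V_0)$ sum.
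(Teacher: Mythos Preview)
Your proof is correct and follows essentially the same route as the paper: partition the triangles incident to $V_0$ by the number of $V_0$-vertices, apply the three limits of Lemma~\ref{L-2-5} to each class, and then verify the combinatorial identity $2|V_0|-|F_2|-|F_3|=2\chi(S(V_0))$. The only cosmetic difference is in this last step: you split the edges into $E_1,E_2$ and count edge--triangle incidences class by class, whereas the paper bundles these into the single relation $3|F(V_0)|=2|E(V_0)|+|Lk(V_0)|$; both computations are equivalent.
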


\begin{proof}
For $m=1,2,3$, let $F_m(V_0)$ denote the set of triangles having exactly $m$ vertices in $V_0$. By Lemma~\ref{L-2-5}, it follows from~\eqref{E-2-5a},~\eqref{E-2-5b} and~\eqref{E-2-5c} that
\[
\sum_{v_i\in V_0}K_{i,n_k}\,\to\,2\pi|V_0| -\sum_{(e,u)\in Lk(V_0)}\big[\pi-(1-t_\infty)\widetilde{\Theta}(e)-t_\infty\Theta(e)\big]-\pi|F_2(V_0)|-\pi|F_3(V_0)|.
\]
Let $E(V_0), F(V_0)$ be the sets of edges and triangles having at least one vertex in $V_0$. We are ready to see
\[
|F(V_0)|\,=\,|F_1(V_0)|+|F_2(V_0)|+|F_3(V_0)|.
\]
Meanwhile, notice that
\[
|F_1(V_0)|\,=\,|Lk(V_0)|
\]
and
\[
3|F(V_0)|\,=\,2|E(V_0)|+|Lk(V_0)|.
\]
Combining the above relations gives
\[
\begin{aligned}
   &2|V_0|-|F_2(V_0)|-|F_3(V_0)|\\
=\,&2|V_0|-|F(V_0)|+|F_1(V_0)|-\big(|F_1(V_0)|-|Lk(V_0)|\big)\\%+\big(3|F(V_0)|-2|E(V_0)|-|Lk(V_0)|\big)\\
=\,&2|V_0|-|F(V_0)|+|Lk(V_0)|+\big(3|F(V_0)|-2|E(V_0)|-|Lk(V_0)|\big)\\
=\,&2\big(|V_0|-|E(V_0)|+|F(V_0)|\big)\\
=\,&2\chi\big(S(V_0)\big).
\end{aligned}
\]
As a result,
\[
\sum_{v_i\in V_0}K_{i,n_k}\,\to\, -\sum_{(e,u)\in Lk(V_0)}\big[\pi-(1-t_\infty)\widetilde{\Theta}(e)-t_\infty\Theta(e)\big]+2\pi\chi\big(S(V_0)\big).
\]
\end{proof}

To demonstrate Lemma~\ref{L-3-4}, we need detailed analysis on the limit configuration. As mentioned before, the idea is analogous to the regularization process in PDE theory.
\begin{proof}[\textbf{Proof of Lemma~\ref{L-3-4}}]
By Proposition~\ref{P-3-5}, there exists a subsequence $\{(\mathbf{z}_{n_k},\mathbf{r}_{n_k})\}$  satisfying
\[
(\mathbf{z}_{n_k},\mathbf{r}_{n_k})\,\to\,(\mathbf{z}_\infty,\mathbf{r}_\infty)\,\in\, M.
\]
Let $\mathcal P_\infty=\{D_{i,\infty}\}_{i=1}^{|V|}$ be the circle pattern given by $(\mathbf{z}_\infty,\mathbf{r}_\infty)$ and let $\Theta_\infty$ be the exterior intersection angle function of $\mathcal P_\infty$. The proof is split into the following steps.

\textbf{Step1}, $(\mathbf{z}_\infty,\mathbf{r}_\infty)\in M_{E}$. Since $\big\{(\mathbf{z}_{n_k},\mathbf{r}_{n_k})\big\}\subset \mathcal Ev^{-1}(\gamma)$,  we have
\[
\Theta(e,\mathbf{z}_{n_k},\mathbf{r}_{n_k})\,=\,(1-t_{n_k})\widetilde{\Theta}(e)+t_{n_k}\Theta(e).
\]
As $n_k\to\infty$, it is easy to see
\[
\Theta_\infty(e)\,=\,\Theta(e,\mathbf{z}_\infty,\mathbf{r}_\infty)
\,=\,(1-t_\infty)\widetilde{\Theta}(e)+t_\infty\Theta(e)\,\in\,(0,\pi),
\]
which asserts $(\mathbf{z}_\infty,\mathbf{r}_\infty)\in M_{E}$.
In addition, we immediately check that the function $\Theta_\infty$ satisfies conditions $\mathrm{\mathbf{(c1)}}-\mathrm{\mathbf{(c4)}}$ and $\mathrm{\mathbf{(m5)}}$.

\textbf{Step2}, $(\mathbf{z}_\infty,\mathbf{r}_\infty)\in M_{\mathcal T}$. One needs to deduce that no triangle of $\mathcal T(\mathbf{z}_{n_k},\mathbf{r}_{n_k})$  degenerates to a point or a line segment. More precisely, the triangle inequalities remain valid as $n_k\to\infty$. By Lemma~\ref{L-2-2} and Lemma~\ref{L-2-3}, we reduce the proof to showing every radius $r_{i,n_k}$ is bounded from below and above by positive constants in $(0,\pi)$, which actually follows from Proposition~\ref{P-3-5}.

\textbf{Step3}, $(\mathbf{z}_\infty,\mathbf{r}_\infty)\in M_{G}$. For each pair of non-adjacent vertices $v_\alpha,v_\beta$, it suffices to verify the disks $D_{\alpha,\infty},D_{\beta,\infty}$ are disjoint. Assume on the contrary that $D_{\alpha,\infty} \cap D_{\beta,\infty}\neq\emptyset$. We divide the situation into the following cases:
\begin{itemize}
\item[$(i)$] $D_{\alpha,\infty}\cap D_{\beta,\infty}$ contains interior points. Then for $n_k$ sufficiently large $ D_{\alpha,n_k}\cap D_{\beta,n_k}$ also contains interior points, which contradicts that $D_{\alpha,n_k}, D_{\beta,n_k}$ are disjoint.
\item[$(ii)$] $D_{\alpha,\infty}\cap D_{\beta,\infty}$ consists of a single point $p$. Recall that $\Theta_\infty$ satisfies condition $\mathrm{\mathbf{(c1)}}$. In view of Lemma~\ref{L-3-2}, there exists $v_\eta\in V\setminus\{v_\alpha,v_\beta\}$ such that
\[
p\,\in\,\mathbb D_{\eta,\infty},
\]
which implies $D_{\alpha,\infty}\cap D_{\eta,\infty}$ and $D_{\eta,\infty}\cap D_{\beta,\infty}$ contain interior points.
If one of the vertices $v_\alpha, v_\beta$ is not adjacent to $v_\eta$, similar arguments to case $(i)$ part lead to a contradiction. Suppose both the vertices $v_\alpha, v_\beta$ are adjacent to $v_\eta$. Notice that
    \[
D_{\alpha,\infty}\cap D_{\beta,\infty}\,=\,\{p\}\,\subset\,\mathbb D_{\eta,\infty}.
    \]
It follows from Lemma~\ref{L-2-6} that
\[
\Theta_\infty\big([v_\alpha,v_\eta]\big)+\Theta_\infty\big([v_\eta,v_\beta]\big)\,>\,\pi.
\]
Nevertheless, the function $\Theta_\infty$ satisfies condition $\mathrm{\mathbf{(c2)}}$, we have
\[
\Theta_\infty\big([v_\alpha,v_\eta]\big)+\Theta_\infty\big([v_\eta,v_\beta]\big)\,\leq\,\pi.
\]
This also leads to a contradiction.
\item[$(iii)$] $D_{\alpha,\infty}\cap D_{\beta,\infty}=\partial D_{\alpha,\infty}=\partial D_{\beta,\infty}$. Provided every $v_\tau\in V\setminus\{v_\alpha,v_\beta\}$ is an adjacent vertex of both $v_\alpha$ and $v_\beta$, then $\mathcal T$ is homeomorphic to the boundary of an $m$-gonal bipyramid. Under the assumption that $D_{\alpha,\infty}\cap D_{\beta,\infty}=\partial D_{\alpha,\infty}=\partial D_{\beta,\infty}$, it is easy to see
\[
\Theta_\infty\big([v_\alpha,v_\tau]\big)+\Theta_\infty\big([v_\tau,v_\beta]\big)\,=\,\pi.
\]
Because $\Theta_\infty=(1-t_\infty)\widetilde{\Theta}+t_\infty\Theta$ and $\widetilde{\Theta}\in(0,\pi/2)^{|E|}$, we obtain
\[
\Theta\big([v_\alpha,v_\tau]\big)+\Theta\big([v_\tau,v_\beta]\big)\,\geq\,\pi.
\]
If $m=3$, this violates  condition $\mathrm{\mathbf{(c2)}}$. If $m\geq 4$, we can find edges $e_1,e_2,e_3,e_4$  forming a simple closed curve which separates the vertices of $\mathcal T$ such that
\[
\sum\nolimits_{\mu=1}^4 \Theta(e_\mu)\,\geq\, 2\pi,
\]
which violates condition $\mathrm{\mathbf{(c4)}}$.

We now assume $v_{\tau_0}\in V\setminus\{v_\alpha,v_\beta\}$ is not an adjacent vertex of $v_\alpha$ without loss of generality. Similar reasoning to  case $(i)$ part yields $\mathbb D_{\tau_0,\infty}\cap D_{\alpha,\infty}=\emptyset$. Meanwhile, considering that $D_{\alpha,\infty}\cap D_{\beta,\infty}=\partial D_{\alpha,\infty}=\partial D_{\beta,\infty}$, we  get $\hat{\mathbb C}\setminus D_{\alpha,\infty}=\mathbb D_{\beta,\infty}$. Hence
\[
\mathbb D_{\tau_0,\infty}\,\subset\,\hat{\mathbb C}\setminus D_{\alpha,\infty}\,=\, \mathbb D_{\beta,\infty},
\]
which implies
\[
\mathbb D_{\tau_0,\infty}\cap\mathbb D_{\beta,\infty}\,=\,\mathbb D_{\tau_0,\infty}.
\]
However, following the proof of Proposition~\ref{P-3-5}, we check that $\mathbb D_{\tau_0,\infty}\cap\mathbb D_{\beta,\infty}$ is a proper subset of $\mathbb D_{\tau_0,\infty}$. It is a contradiction.
\end{itemize}

\textbf{Step4}, $(\mathbf{z}_\infty,\mathbf{r}_\infty)\in M_{IG}$. For each $A\subsetneq V$, we need to show
$\cup_{v_i\in A} D_{i,\infty}\subsetneq\hat{\mathbb C}$. To this end, choose a triangle of $\mathcal T$ with vertices $v_l,v_j\in V$ and $v_h\in V\setminus A$. For every $v_i\in V\setminus\{v_l,v_j,v_h\}$, we claim
\begin{equation}\label{E-3-6}
D_{l,\infty}\cap D_{j,\infty}\cap D_{h,\infty}\cap D_{i,\infty}\,= \,\emptyset.
\end{equation}
 In fact, if one of the vertices $v_l, v_j,v_h$ is not adjacent to $v_i$, the claim is a consequence of the third step. Suppose $v_l, v_j,v_h$ are all adjacent to $v_i$. Since $\mathcal T$ possesses more than four vertices, there is no loss of generality in assuming that the edges $[v_l,v_j],[v_j ,v_i], [v_i,v_l]$ form a simple closed curve separating the vertices of $\mathcal T$. The following Proposition~\ref{P-3-7} also implies the claim.

In light of~\eqref{E-3-6}, the point $q=\partial D_{l,\infty}\cap\partial D_{j,\infty}\cap D_{h,\infty}$ does not belong to any disks in $\mathcal P_\infty$ except for
$D_{l,\infty}, D_{j,\infty},D_{h,\infty}$. Thus one can choose a  neighborhood $O_q$ of $q$ such that $O_q, D_i$ are disjoint for every $v_i\in V\setminus\{v_l,v_j,v_h\}$. That means
\begin{equation}\label{E-3-7}
\big(\cup_{v_i\in A}D_{i,\infty}\big)\cap U_q\,=\,\emptyset,
\end{equation}
where $U_q=O_q\setminus(D_{l,\infty}\cup D_{j,\infty})$ is a non-empty set. Consequently,
\[
\cup_{v_i\in A} D_{i,\infty}\,\subset\,\hat{\mathbb C}\setminus U_q  \,\subsetneq\,\hat{\mathbb C}.
\]

\textbf{Step5}, $(\mathbf{z}_\infty,\mathbf{r}_\infty)\in M^\star_{IG}$. It remains to prove the marked triangle does not tend to a point, a line segment or a hemisphere, which follows from Lemma~\ref{L-2-2} and Lemma~\ref{L-2-3}.
\end{proof}

\begin{proposition}\label{P-3-7}
Suppose $[v_l,v_j], [v_j,v_i], [v_i,v_l]$ form a simple closed curve which separates the vertices of $\mathcal T$. Then
\[
D_{l,\infty}\cap D_{j,\infty}\cap D_{i,\infty}\,=\,\emptyset.
\]
\end{proposition}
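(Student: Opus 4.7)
The plan is to apply Lemma~\ref{L-2-7} to the triple $D_{l,\infty}, D_{j,\infty}, D_{i,\infty}$. By Step~1 of the preceding proof of Lemma~\ref{L-3-4}, the limit function $\Theta_\infty = (1-t_\infty)\widetilde{\Theta}+t_\infty\Theta$ takes values in $(0,\pi)$ and inherits conditions $\mathrm{\mathbf{(c1)}}$--$\mathrm{\mathbf{(c4)}}$ and $\mathrm{\mathbf{(m5)}}$ from $\widetilde{\Theta}$ and $\Theta$ (these are preserved by convex combination). In particular $\Theta_\infty([v_l,v_j]),\Theta_\infty([v_j,v_i]),\Theta_\infty([v_i,v_l])\in(0,\pi)$ forces the three disks to meet pairwise, and since the three edges form a simple closed curve separating the vertices of $\mathcal T$, condition $\mathrm{\mathbf{(c3)}}$ yields
\[
\Theta_\infty([v_l,v_j])+\Theta_\infty([v_j,v_i])+\Theta_\infty([v_i,v_l])\,<\,\pi.
\]

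The remaining hypothesis of Lemma~\ref{L-2-7}, namely $\mathbb D_{l,\infty}\cup\mathbb D_{j,\infty}\cup\mathbb D_{i,\infty}\subsetneq\hat{\mathbb C}$, I would extract from the pre-limit irreducibility. Because $|V|>4$, the set $\{v_l,v_j,v_i\}$ is a proper subset of $V$, so irreducibility of each $\mathcal P_{n_k}$ produces a point $q_{n_k}\in\hat{\mathbb C}\setminus(D_{l,n_k}\cup D_{j,n_k}\cup D_{i,n_k})$, for which
\[
d(q_{n_k},z_{\mu,n_k})\,\geq\, r_{\mu,n_k}, \qquad \mu\in\{l,j,i\}.
\]
Extracting a convergent subsequence $q_{n_k}\to q_\infty\in\hat{\mathbb C}$ and passing to the limit in each of these inequalities gives $d(q_\infty,z_{\mu,\infty})\geq r_{\mu,\infty}$, so $q_\infty\notin\mathbb D_{l,\infty}\cup\mathbb D_{j,\infty}\cup\mathbb D_{i,\infty}$. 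With all three hypotheses verified, Lemma~\ref{L-2-7} delivers $D_{l,\infty}\cap D_{j,\infty}\cap D_{i,\infty}=\emptyset$, as claimed.

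The only substantive step is the middle one, since the property ``the open disks form a proper subset of $\hat{\mathbb C}$'' is not automatically preserved under Hausdorff limits of disk configurations. One must genuinely choose a witness point $q_{n_k}$ in each pre-limit complement and extract a subsequential limit; this is what converts the pre-limit irreducibility of $\mathcal P_{n_k}$ into the open-cover condition required by Lemma~\ref{L-2-7}. Everything else is formal, and I do not anticipate any further obstacle once this extraction is in hand.
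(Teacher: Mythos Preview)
Your proposal is correct and follows essentially the same route as the paper: verify the angle-sum hypothesis of Lemma~\ref{L-2-7} via condition $\mathrm{\mathbf{(c3)}}$, verify the proper-subset hypothesis from pre-limit irreducibility, and conclude. The only cosmetic difference is in the middle step: the paper argues contrapositively that if $\mathbb D_{l,\infty}\cup\mathbb D_{j,\infty}\cup\mathbb D_{i,\infty}=\hat{\mathbb C}$ then by stability of open covers the same holds for large $n_k$, contradicting irreducibility, whereas you extract a witness point from each pre-limit complement and pass to the limit---these are two standard, equivalent ways to push the same compactness fact through a limit.
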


\begin{proof}
First it is easy to see $\mathbb D_{l,\infty}\cup\mathbb D_{j,\infty}\cup \mathbb D_{i,\infty}\subsetneq\hat{\mathbb C}$. Otherwise, for $n_k$ sufficiently large, we have $\mathbb D_{l,n_k}\cup\mathbb D_{j,n_k}\cup \mathbb D_{i,n_k}=\hat{\mathbb C}$,
which contradicts that $\mathcal P_{n_k}$ is \textbf{irreducible}. Recall that
\[
\Theta_\infty([v_l,v_j])+\Theta_\infty([v_j,v_i])+\Theta_\infty([v_i,v_l])\,<\,\pi.
\]
By Lemma~\ref{L-2-7}, we prove the proposition.
\end{proof}

\begin{lemma}\label{L-3-8}
There exists an \textbf{irreducible} circle pattern $\mathcal P$ on the Riemann sphere $\hat{\mathbb C}$ realizing the data
$(\mathcal T,\widetilde{\Theta})$. Moreover, $\mathcal P$ is unique up to linear and anti-linear fractional maps.
\end{lemma}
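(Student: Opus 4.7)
The strategy is to apply Andreev's theorem (Theorem~\ref{T-1-2}) to the trivalent polyhedron $P$ dual to the triangulation $\mathcal T$, with prescribed dihedral angles given by $\widetilde{\Theta}$, and then to convert the resulting compact hyperbolic polyhedron into a circle pattern via Thurston's correspondence between hyperbolic half-spaces and disks on $\partial\mathbb{H}^3=\hat{\mathbb C}$.

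First, I would verify the four hypotheses of Andreev's theorem for the data $(P,\widetilde{\Theta})$. Condition $(i)$ demands $\widetilde{\Theta}(e_1)+\widetilde{\Theta}(e_2)+\widetilde{\Theta}(e_3)>\pi$ for the three edges of any triangle of $\mathcal T$; this follows from $\widetilde{\Theta}(e)>\Theta_{s_0}(e)$ on every edge (by the definition of $U_\epsilon$) together with the identity $\sum_{\mu=1}^{3}\Theta_{s_0}(e_\mu)=(1-s_0)\pi+s_0\sum_{\mu=1}^{3}\Theta(e_\mu)\geq \pi$ obtained from $\mathrm{\mathbf{(m5)}}$ applied to $\Theta$. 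Conditions $(ii)$ and $(iii)$ of Andreev, the prismatic $3$-circuit and $4$-circuit bounds, are exactly $\mathrm{\mathbf{(c3)}}$ and $\mathrm{\mathbf{(c4)}}$ after dualising, since prismatic $k$-circuits in $P$ correspond to separating $k$-cycles in $\mathcal T$, and they hold because $\widetilde{\Theta}\in W_m$. Condition $(iv)$ is automatic from $(ii)$ and $(iii)$ whenever $P$ is not the triangular prism (cf. the remark after Theorem~\ref{T-1-2}); in the remaining case where $\mathcal T$ is the boundary of the triangular bipyramid, I would derive $(iv)$ by a short direct computation exploiting that on $U_\epsilon$ the entries of $\widetilde{\Theta}$ lie close to $\pi/3$, so $\mathrm{\mathbf{(c2)}}$ holds strictly with a quantitative margin and all six angles appearing in $(iv)$ are comfortably below $\pi/2$.

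Once Andreev's theorem applies, one obtains a compact convex hyperbolic polyhedron $Q$, unique up to $\Isom(\mathbb{H}^3)$, with combinatorics dual to $\mathcal T$ and dihedral angles equal to $\widetilde{\Theta}$. In the Poincaré ball model, each face $F_v$ of $Q$ lies on a hyperbolic plane whose ideal boundary is the circle $\partial D_v$ of a closed disk $D_v\subset\hat{\mathbb C}$, and the half-space $H_v$ equal to the convex hull of $\hat{\mathbb C}\setminus D_v$ realises the Thurston dictionary $Q=\cap_v H_v$. The collection $\mathcal P=\{D_v\}_{v\in V}$ is then the desired pattern: its contact graph is the $1$-skeleton of $\mathcal T$ and its exterior intersection angles match the dihedral angles of $Q$, namely $\widetilde{\Theta}$. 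Compactness of $Q$ translates to $\cup_{v\in V}D_v=\hat{\mathbb C}$; for any proper subset $A\subsetneq V$, the fact that $Q$ carries a non-degenerate $2$-dimensional face at each $v\in V\setminus A$ (together with the combinatorial rigidity supplied by Andreev's theorem) forces $\cup_{v\in A}D_v\subsetneq\hat{\mathbb C}$, giving the required irreducibility. Finally, uniqueness of $\mathcal P$ up to linear and anti-linear fractional maps of $\hat{\mathbb C}$ is inherited from the uniqueness of $Q$ up to $\Isom(\mathbb{H}^3)$ via the Poincaré extension, which identifies the two groups.

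The main obstacle is the verification of Andreev's condition $(iv)$ in the exceptional triangular bipyramid case; the polyhedron-to-pattern dictionary that closes out the argument is classical, but deducing full irreducibility (rather than merely the covering property $\cup_v D_v=\hat{\mathbb C}$) from compactness of $Q$ requires care with the face structure and the strict positivity of all prescribed dihedral angles.
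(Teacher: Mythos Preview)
Your overall strategy---apply Andreev's theorem to the dual polyhedron and translate back via Thurston's half-space dictionary---is exactly the paper's route, and your verification of Andreev's hypotheses is fine (in fact condition~$(iv)$ is immediate once you note $\widetilde{\Theta}\in(0,\pi/2)^{|E|}$, so the six-angle sum is strictly below $3\pi$; this is not the obstacle).

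The genuine gap is the step you dismiss as ``classical'': the assertion that the contact graph of $\mathcal P=\{D_v\}$ is precisely the $1$-skeleton of $\mathcal T$. The Thurston dictionary tells you that \emph{adjacent} disks meet at the prescribed exterior angles, but it does \emph{not} say that non-adjacent disks are disjoint. Two supporting hyperbolic planes of a compact convex polyhedron can intersect (outside the polyhedron) even when the corresponding faces share no edge, and then the associated disks on $\hat{\mathbb C}$ overlap. The paper handles this by first applying an isometry so that the origin of $\mathbb H^3$ lies in the interior of $Q$, which forces every spherical radius $r_i<\pi/2$; it then proves, via an explicit distance computation using the spherical law of cosines and Fermat's lemma, that each disk $D_\alpha$ is contained in its open star $S(\{v_\alpha\})$. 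Disjointness of non-adjacent disks follows because non-adjacent stars are disjoint. This argument genuinely uses the acuteness $\widetilde{\Theta}<\pi/2$ and is not a formality.

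The same normalisation also makes irreducibility straightforward, and in a cleaner way than your face-structure sketch: once all $r_i<\pi/2$, for any $v_h\notin A$ the centre $z_h$ lies outside every $D_i$ with $i\neq h$ (again from the cosine formula), so $z_h\notin\cup_{v_i\in A}D_i$. Your proposal identifies the right theorem to invoke and the right endgame, but you should supply the $r_i<\pi/2$ normalisation and the containment $D_\alpha\subset S(\{v_\alpha\})$; these are where the actual content of the lemma lives.
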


\begin{proof}
In view of Andreev's Theorem and Thurston's observation, there exists a $\mathcal T$-type circle pattern $\mathcal P$ on
$\hat{\mathbb C}=\partial \mathbb H^3$ with exterior intersection angles given by $\widetilde{\Theta}$. Furthermore, using an appropriate isometry of $\mathbb H^3$, we may assume the origin $O\in \mathbb H^3$ is an interior point of the corresponding hyperbolic polyhedron. It is clear that each disk $D_i$ of $\mathcal P$ has radius less than $\pi/2$, otherwise $O$ does not belong to the half space for $D_i$, which is equal to the closed convex hull of ideal points in $\hat{\mathbb C}\setminus D_i\subset\partial \mathbb H^3$.

First we check that the contact graph of $\mathcal P$ is isomorphic to the $1$-skeleton of $\mathcal T$. For each pair of non-adjacent vertices $v_\alpha,v_\beta$, we need to show $D_\alpha\cap D_\beta=\emptyset$.
Recall that $S\big(\{v_\alpha\}\big)$ is the union of \textbf{open} $d$-simplices $(d=0,1,2)$ of $\mathcal T$ incident to $v_\alpha$. As in Figure~\ref{F-8}, we claim
\[
D_\alpha\,\subset\,S\big(\{v_\alpha\}\big).
\]
Assume $v_{1},\cdots,v_{m}\in V$, in anticlockwise order, are all adjacent vertices of $v_\alpha$. It suffices to prove the geodesic segment $\overline{z_\mu z_{\mu+1}}$ between the vertices $v_\mu$ and $v_{\mu+1}$ (set $v_{m+1}=v_1$) locates outside the disk $D_\alpha$ for $\mu=1,2,\cdots,m$. To this end, let us demonstrate
\begin{equation}\label{E-3-8}
d(z_\alpha,p)\,>\,r_\alpha
\end{equation}
for every point $p$ in $\overline{z_\mu z_{\mu+1}}$.

\begin{figure}[htbp]\centering
\includegraphics[width=0.54\textwidth]{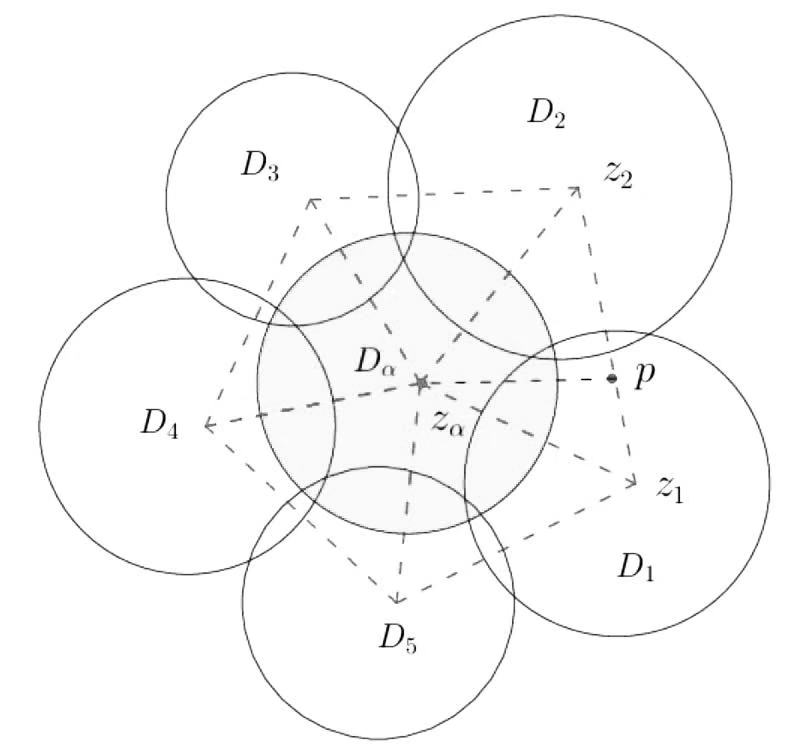}
\caption{A flower of disks with acute exterior intersection angles}\label{F-8}
\end{figure}

We start with the special case that $p=z_\mu$ or $p=z_{\mu+1}$. Since $r_\alpha, r_\mu ,\widetilde{\Theta}\big([v_\alpha,v_\mu]\big)\in(0,\pi/2)$, the formula
\[
\cos d(z_\alpha,z_\mu)\,=\,\cos r_\alpha\cos r_\mu-\cos\widetilde{\Theta}\big([v_\alpha,v_\mu]\big)\sin r_\alpha\sin r_\mu
\]
indicates $d(z_\alpha,z_\mu)>r_\alpha$. Similarly, $d(z_\alpha,z_{\mu+1})>r_\alpha$.
Let $x=d(z_\mu,p),$ $\theta=\angle z_\alpha z_\mu z_{\mu+1}$. Note that
\[
g(x)\,:=\,\cos d(z_\alpha,p)\,=\,\cos d(z_\alpha,z_\mu)\cos x+\cos\theta\sin d(z_\alpha,z_\mu)\sin x
\]
is a continuous function on the interval $\big[0,d(z_\mu,z_{\mu+1})\big]$. If  $g$ attains maximum at one of the endpoints, inequality~\eqref{E-3-8} naturally holds. Provided $g$ attains maximum at an interior point $x_0=d(z_\mu, p_0)$,   Fermat's Lemma gives
\[
0\,=\,g'(x_0)\,=\,-\cos d(z_\alpha,z_\mu)\sin x_0+\cos\theta\sin d(z_\alpha,z_\mu)\cos x_0.
\]
It follows that
\begin{equation}\label{E-3-9}
\cos d(z_\mu,p_0)\cos d(z_\alpha,p_0)\,=\,g(x_0)\cos x_0\,=\,\cos d(z_\alpha,z_\mu).
\end{equation}
Therefore, the geodesic segments $\overline{z_\alpha p_0}$ and $\overline{z_\mu z_{\mu+1}}$ are orthogonal at $p_0$. Set $y_0=d(p_0,z_{\mu+1})$. Because $x_0+y_0=d(z_\mu,z_{\mu+1})<r_\mu+r_{\mu+1}$, we have $x_0<r_\mu$ or $y_0<r_{\mu+1}$. Without loss of generality, assume the former case holds. That means
\[
\cos d(z_\alpha,z_\mu)\,=\,\cos r_\alpha\cos r_\mu-\cos\widetilde{\Theta}\big([v_\alpha,v_\mu]\big)\sin r_\alpha\sin r_\mu\,<\,\cos r_\mu\cos r_\alpha\,<\,\cos x_0\cos r_\alpha.
\]
Together with~\eqref{E-3-9}, we obtain
\[
\cos d(z_\alpha,p)\,=\,g(x)\,\leq\,g(x_0)\,< \,\cos r_\alpha,
\]
which also asserts~\eqref{E-3-8}. As a result,
\[
D_\alpha\,\subset\,S\big(\{v_\alpha\}\big).
\]
Hence
\[
D_\alpha\cap D_\beta\,\subset\,S\big(\{v_\alpha\}\big)\cap S\big(\{v_\beta\}\big)\,=\,\emptyset.
\]

To prove $\mathcal P$ is \textbf{irreducible}, we need to show $\cup_{v_i\in A}D_i\subsetneq\hat{\mathbb C}$ for every proper subset $A\subsetneq V$. Choosing a vertex $v_h\in V\setminus A$, we easily check that the center $z_h$ of the disk $D_h$ locates outside $D_i$ for every $v_i\in A\subset V\setminus\{v_h\}$. Thus
\[
\cup_{v_i\in A}D_i\,\subset\,\hat{\mathbb C}\setminus\{z_h\}\,\subsetneq\,\hat{\mathbb C}.
\]

In summary, $\mathcal P$ is the required circle pattern realizing the data $(\mathcal T,\widetilde{\Theta})$. Meanwhile, the rigidity part follows from the rigidity of the corresponding hyperbolic polyhedron, which is also implied by Andreev's Theorem.
\end{proof}

\begin{remark}
See also the work of Bowers-Stephenson \cite{Bowers-Stephenson} for the existence and rigidity of $\mathcal T$-type circle patterns with non-obtuse exterior intersection angles.
\end{remark}

Such a circle pattern can be mapped into a normalized one through a unique linear or anti-linear fractional map. With these preparations, we acquire the following result.
\begin{theorem}\label{T-3-10}
Let $\Lambda$ be as above. Then
\[
\deg(\mathcal{E}v,\Lambda,\Theta)\,=\,1
\;\;\text{\emph{or}}\;\;\deg(\mathcal{E}v,\Lambda,\Theta)\,=\,-1.
\]
\end{theorem}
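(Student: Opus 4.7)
The plan is to combine the homotopy invariance of topological degree with a direct computation at the regular value $\widetilde\Theta$.

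First, I invoke homotopy invariance along the curve $\gamma=\{\Theta(t)\}_{0\leq t\leq 1}$ joining $\widetilde\Theta$ to $\Theta$. By Lemma~\ref{L-3-4} the preimage $\mathcal{E}v^{-1}(\gamma)$ lies inside the relatively compact open set $\Lambda\subset M^\star_{IG}$, which in particular forces $\Theta(t)\notin \mathcal{E}v(\partial \Lambda)$ for every $t\in[0,1]$. The standard homotopy-invariance property of topological degree (recalled in the appendix) therefore applies and yields
\begin{equation*}
\deg(\mathcal{E}v,\Lambda,\Theta)\;=\;\deg(\mathcal{E}v,\Lambda,\Theta(t))\qquad\text{for all }t\in[0,1],
\end{equation*}
and in particular $\deg(\mathcal{E}v,\Lambda,\Theta)=\deg(\mathcal{E}v,\Lambda,\widetilde\Theta)$.

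Second, I compute $\deg(\mathcal{E}v,\Lambda,\widetilde\Theta)$ via the regular-value formula. Since $\widetilde\Theta$ was chosen to be a regular value of $\mathcal{E}v$, the preimages in $\Lambda$ are isolated, and by relative compactness of $\Lambda$ they are finite in number; the degree equals the signed count
\begin{equation*}
\deg(\mathcal{E}v,\Lambda,\widetilde\Theta)\;=\;\sum_{p\in \mathcal{E}v^{-1}(\widetilde\Theta)\cap \Lambda}\operatorname{sgn}\bigl(\det D\mathcal{E}v(p)\bigr).
\end{equation*}
I claim this sum contains exactly one term. By Lemma~\ref{L-3-8}, there is a unique irreducible circle pattern realizing $(\mathcal T,\widetilde\Theta)$ up to linear and anti-linear fractional maps. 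Using Lemma~\ref{L-2-2} one finds a canonical three-disk configuration with the prescribed angles $\Theta(e_a),\Theta(e_b),\Theta(e_c)$ and radii $\pi/4$; since the linear fractional group is $6$-real-dimensional and the normalizations $\mathrm{\langle\mathbf{x5}\rangle},\mathrm{\langle\mathbf{x6}\rangle}$ impose exactly six real equalities, one linear fractional map brings our pattern into $M^\star_{IG}$. Composing instead with an anti-linear fractional map reverses orientation and sends $z_c$ to the lower half-plane, and no further linear fractional transformation preserving $z_a=0$, $z_b>0$ and the three spherical radii $\pi/4$ can return $z_c$ to $\{0<\operatorname{Arg}(z_c)<\pi\}$. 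Because the vertices of $\mathcal T$ are labelled, this other representative truly lies outside $M^\star_{IG}$. Hence $\mathcal{E}v^{-1}(\widetilde\Theta)\cap M^\star_{IG}$ is a single point $p_0$, which lies in $\Lambda$ because $\widetilde\Theta\in\gamma$.

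Regularity of $\widetilde\Theta$ forces $D\mathcal{E}v(p_0)$ to be an isomorphism between the two $|E|$-dimensional tangent spaces, so $\operatorname{sgn}\bigl(\det D\mathcal{E}v(p_0)\bigr)\in\{+1,-1\}$, and the theorem follows. The main obstacle is the uniqueness of the normalized representative: one has to use both the orientation constraint hidden inside $\mathrm{\langle\mathbf{x5}\rangle}$ and the rigidity of the vertex labelling on $\mathcal T$ to exclude a second normalized preimage arising from the anti-linear branch of the equivalence class of Lemma~\ref{L-3-8}; the rest of the argument reduces to standard degree-theoretic bookkeeping.
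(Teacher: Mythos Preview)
Your proof is correct and follows essentially the same route as the paper: compute $\deg(\mathcal{E}v,\Lambda,\widetilde\Theta)$ via Lemma~\ref{L-3-8} and the regular-value choice of $\widetilde\Theta$, then transport the value to $\Theta$ along $\gamma$ using Lemma~\ref{L-3-4} and homotopy invariance (Theorem~\ref{T-6-8}). The paper compresses the uniqueness of the normalized preimage into a one-line remark (``such a circle pattern can be mapped into a normalized one through a unique linear or anti-linear fractional map''), whereas you spell out why the anti-linear branch is excluded by condition~$\mathrm{\langle\mathbf{x5}\rangle}$; this extra detail is welcome but does not change the argument.
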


\begin{proof}
We begin by computing $\deg(\mathcal{E}v,\Lambda,\widetilde{\Theta})$. Because of Lemma~\ref{L-3-8}, $\mathcal{E}v^{-1}(\widetilde{\Theta})$ consists of a unique point. Noting that $\widetilde{\Theta}$ is a regular value of the map $\mathcal{E}v$, we get
\[
\deg(\mathcal {E}v,\Lambda,\widetilde{\Theta})\,=\,1 \quad \text{or} \quad \deg(\mathcal{E}v,\Lambda,\widetilde{\Theta})\,=\,-1.
\]
In light of Lemma~\ref{L-3-4} and Theorem~\ref{T-6-8}, the assertion is proved.
\end{proof}

\subsection{Existence and rigidity}

Applying Theorem~\ref{T-3-10}, we immediately derive the main result of this section.

\begin{proof}[\textbf{Proof of Theorem~\ref{T-3-1}}]
By Theorem~\ref{T-3-10} and Theorem~\ref{T-6-9},  any function
$\Theta: E\to(0,\pi)$ satisfying conditions   $\mathrm{\mathbf{(c1)}}- \mathrm{\mathbf{(c4)}}$ and $\mathrm{\mathbf{(m5)}}$ is in the image of the map $\mathcal{E}v$. That means there exists an irreducible circle pattern $\mathcal P$ on $\hat{\mathbb C}$ realizing the data $(\mathcal T, \Theta)$.
\end{proof}

Meanwhile, Sard's Theorem indicates almost all these patterns are local rigidity.
\begin{theorem}\label{T-3-11}
For almost every $\Theta\in W_{m}$, up to linear and anti-linear fractional maps, there are at most finitely many \textbf{irreducible} circle patterns on $\hat{\mathbb C}$ realizing the data
$(\mathcal T,\Theta)$.
\end{theorem}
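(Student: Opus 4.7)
The plan is to combine Sard's Theorem with the compactness machinery already developed for the evaluation map
\[
\mathcal{E}v:\; M^\star_{IG}\,\longrightarrow\, Y=(0,\pi)^{|E|},
\]
whose source and target are smooth manifolds of the same dimension $|E|$. Sard's Theorem (Theorem~\ref{T-6-2}) asserts that the set of critical values of $\mathcal{E}v$ has Lebesgue measure zero in $Y$; intersecting with $W_{m}\subset Y$, almost every $\Theta\in W_{m}$ is a regular value of $\mathcal{E}v$.

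Fix such a regular value $\Theta$. Since $\dim M^\star_{IG}=\dim Y=|E|$, the Regular Value Theorem gives that $\mathcal{E}v^{-1}(\Theta)$ is a $0$-dimensional submanifold of $M^\star_{IG}$, hence discrete. To upgrade discreteness to finiteness, I would rerun the compactness argument of Lemma~\ref{L-3-4} with the constant curve $\gamma(t)\equiv\Theta$ in place of the homotopy used there. This is legitimate because the proof of Lemma~\ref{L-3-4} only invokes the fact that the limiting angle function lies in $W_{m}$; the same chain of reasoning (Proposition~\ref{P-3-5}, Lemma~\ref{L-3-2}, Proposition~\ref{P-3-7}, and so on) then produces a relatively compact open subset $\Lambda_{\Theta}\subset M^\star_{IG}$ containing $\mathcal{E}v^{-1}(\Theta)$. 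A discrete subset of a relatively compact set is finite, so $\mathcal{E}v^{-1}(\Theta)$ consists of only finitely many points.

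It remains to translate this finiteness into the statement of the theorem. The normalization conditions $\mathrm{\langle\mathbf{x5}\rangle}$--$\mathrm{\langle\mathbf{x6}\rangle}$ are designed so that every irreducible $\mathcal T$-type circle pattern on $\hat{\mathbb C}$ realizing $(\mathcal T,\Theta)$ admits a unique representative in $M^\star_{IG}$ via a linear or anti-linear fractional map: three of the six real parameters of the M\"{o}bius group are consumed by $\mathrm{\langle\mathbf{x5}\rangle}$ and the remaining three by the radius equations of $\mathrm{\langle\mathbf{x6}\rangle}$, while the open orientation constraint $0<\operatorname{Arg}(z_{c})<\pi$ selects between linear and anti-linear representatives. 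Consequently the finiteness of $\mathcal{E}v^{-1}(\Theta)$ is exactly the desired finiteness of equivalence classes.

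The only real work in this proof lies in checking that the compactness proof of Lemma~\ref{L-3-4} transfers verbatim to the constant-curve case, i.e.\ verifying the non-degeneration of individual disks, the absence of overlaps between combinatorially non-adjacent disks, and the preservation of irreducibility in the limit. All of these steps are immediate from the earlier arguments, since the only property of $\gamma$ used there was that its image lies in $W_{m}$.
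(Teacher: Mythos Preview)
Your argument is correct and mirrors the paper's own proof: apply Sard's Theorem to get a full-measure set of regular values, invoke the Regular Value Theorem for discreteness of the fiber, and reuse the compactness machinery of Lemma~\ref{L-3-4} (with the constant curve $\gamma\equiv\Theta$) to conclude finiteness. The paper additionally throws away the measure-zero set $\partial W_m$, but this is cosmetic and does not affect the ``almost every'' conclusion.
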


\begin{proof}
Set $W_0=\mathcal Ev(M_{0})$, where $M_{0}\subset M_{IG}^\star$ denotes the set of critical points of the map $\mathcal Ev$. By Sard's Theorem, $M_0$ has zero measure. What is more, the boundary set $\partial W_{m}$ has also zero measure. Hence $W_{m}\setminus (W_0\cup\partial W_{m})$ is a subset of $W_{m}$ with full measure. Recall that
\[
\dim(M^\star_{IG})\,=\,\dim (W_m)\,=\,|E|.
\]
For $\Theta\in W_{m}\setminus(W_0\cup\partial W_{m})$, the Regular Value Theorem (Theorem~\ref{T-6-1}) implies
$\mathcal Ev^{-1}(\Theta)$ is a discrete set. Moreover, in a way similar to the proof of Lemma \ref{L-3-4},  we show this set is compact. Thus
$\mathcal Ev^{-1}(\Theta)$ consists of finite points, which completes the proof.
\end{proof}

We are led to the generalization of Andreev's Theorem.
\begin{proof}[\textbf{Proof of Theorem~\ref{T-1-8}}]
By Theorem~\ref{T-3-1}, there exists an irreducible circle pattern $\mathcal P$ on $\hat{\mathbb C}$ realizing the dual data $(P^\ast,\Theta^\ast)$. For each disk $D_i$ of $\mathcal P$, let $H_i\subset \mathbb H^3$ be the closed convex hull of ideal points in $\hat{\mathbb C}\setminus D_i\subset\partial\mathbb H^3$. Evidently, each $H_i$ is a half space. It suffices to verify that $Q=\cap_{i=1}^{|V|} H_i$ is a compact convex polyhedron realizing the data $(P,\Theta)$.

First let us show $Q$ has a non-empty interior. Suppose $v_l,v_j,v_h$ are the vertices of a triangle of $P^\ast$. For each
$v_i\in V\setminus\{v_l,v_j,v_h\}$, proceeding as in the proof of~\eqref{E-3-6}, we obtain
\begin{equation}\label{E-3-10}
D_l\cap D_j\cap D_h\cap D_i\,=\,\emptyset.
\end{equation}
Hence there is a neighborhood $B_q$ of the point $q=\partial_{\mathbb H^3} H_l\cap \partial_{\mathbb H^3} H_j\cap\partial_{\mathbb H^3} H_h$ such that $B_q\subset H_{i}$ for every
$v_i\in V\setminus\{v_l,v_j,v_h\}$, which implies
\[
H_l\cap H_j\cap H_h \cap B_q\,\subset\,Q.
\]
Evidently, $Q$ has a non-empty interior and is consequently a convex polyhedron.

Next we verify that $Q,P$ are combinatorially equivalent by asserting there exists a homeomorphism between their boundaries. Note that
\begin{equation}\label{E-3-11}
\partial_{\mathbb H^3} Q\,=\,Q\cap \overline{Q^c}\,=\,\cup_{i=1}^{|V|} Q\cap \overline{H_i^c}\,=\,\cup_{i=1}^{|V|} Q\cap H_i\cap \overline{H_i^c}\,=\,\cup_{i=1}^{|V|}Q\cap \partial_{\mathbb H^3} H_i.
\end{equation}
Suppose the $i$-th face of $P$ is $m$-sided. It suffices to prove the $i$-th face
\[
\mathcal F_i\,:=\,Q\cap \partial_{\mathbb H^3} H_i\,=\,\big(\cap_{\mu=1}^{|V|}H_\mu\big)\cap\partial_{\mathbb H^3} H_i.
\]
of $Q$ is also $m$-sided. For any $v_\mu\in V\setminus\{v_i\}$
which is not adjacent to $v_i$, we claim
\[
\partial_{\mathbb H^3} H_i\,\subset\,H_\mu.
\]
Otherwise, $D_i\cap D_\mu\neq \emptyset$, which contradicts that $D_i, D_\mu$ are disjoint. Assume $v_1,\cdots,v_m$, in anticlockwise order, are all adjacent vertices of $v_i$. Then
\begin{equation}\label{E-3-12}
\mathcal F_i\,=\,\big(\cap_{\mu=1}^{|V|}H_\mu\big)\cap\partial_{\mathbb H^3} H_i\,=\,\big(\cap_{\mu=1}^m H_\mu\big)\cap\partial_{\mathbb H^3} H_i.
\end{equation}
By abuse of notation, let $\partial \mathcal F_i$ represent the boundary of $\mathcal F_i$ in the geodesic plane $\partial_{\mathbb H^3} H_i$. Similar reasoning  to the proofs of~\eqref{E-3-11} and~\eqref{E-3-12} yields
\[
\partial\mathcal F_i\,=\,\cup_{\mu=1}^m \mathcal F_i\cap \partial_{\mathbb H^3} H_\mu\,=\,\cup_{\mu=1}^m H_{\mu-1}\cap H_{\mu+1}\cap\partial_{\mathbb H^3} H_i\cap\partial_{\mathbb H^3} H_\mu\,:=\,\cup_{\mu=1}^m \mathcal L_{\mu}.
\]
Here we set $H_0=H_m$ and $H_{m+1}=H_1$. It is easy to see $\mathcal L_{\mu}$ is a geodesic segment with endpoints $q_{\mu},q_{\mu+1}$, where
 $q_\mu=\partial_{\mathbb H^3} H_{\mu-1}\cap\partial_{\mathbb H^3} H_{\mu}\cap\partial_{\mathbb H^3} H_i$. Thus $\partial \mathcal F_i$ is a simple closed curve consisting of $m$ geodesic segments. Namely, $\mathcal F_i$ is an $m$-sided polygon.

In summary, $Q$ is a compact convex hyperbolic polyhedron realizing the data $(P,\Theta)$. Finally, the rigidity is a consequence of the following Theorem~\ref{T-3-12}.
\end{proof}

The theorem below was proved by Rivin-Hodgson~\cite[Corollary 4.6]{Rivin-Hodgson} via analogous arguments used by Cauchy in the proof of his rigidity theorem~\cite[Chap. 12]{Aigner-Ziegler} for compact convex polyhedra in Euclidean 3-space. Moreover, a similar proof can be also seen in the work of Roeder-Hubbard-Dunbar~\cite[Proposition 4.1]{Roeder-Hubbard-Dunbar}. In fact, this result greatly generalizes Theorem~\ref{T-3-11} once we relate hyperbolic polyhedra to circle patterns.

\begin{theorem}[Rivin-Hodgson]\label{T-3-12}
Compact convex polyhedra in hyperbolic 3-space $\mathbb H^3$ with trivalent vertices are determined up to congruences by their combinatorics and dihedral angles.
\end{theorem}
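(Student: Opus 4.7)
The plan is to imitate Cauchy's classical rigidity argument, adapted to hyperbolic $3$-space and trivalent vertices. Let $Q$ and $Q'$ be two combinatorially equivalent compact convex hyperbolic polyhedra with trivalent vertices whose dihedral angle functions coincide. The first step is to pass from dihedral data to face-angle data. At each trivalent vertex $v$, the intersection of $Q$ with a small geodesic sphere about $v$ is a spherical triangle $\Sigma_v$ whose three interior angles are the dihedral angles along the edges at $v$ and whose three side lengths are the face angles of $Q$ at $v$ in the three incident faces. By the second spherical law of cosines, a spherical triangle is determined up to congruence by its interior angles, so these three face angles are forced by the dihedral data. Consequently every face of $Q$ is a convex hyperbolic polygon with the same cyclic sequence of interior angles as the corresponding face of $Q'$.

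For each edge $e$, let $\sigma(e)\in\{+,-,0\}$ be the sign of $\ell_Q(e)-\ell_{Q'}(e)$, and assume for contradiction that $\sigma\not\equiv 0$. The heart of the argument is a planar sub-lemma: if two convex hyperbolic polygons have matching cyclic sequences of interior angles but are not congruent, then the signed differences of corresponding edge lengths exhibit at least four cyclic sign changes around the boundary. This is the hyperbolic counterpart of the arm lemma used by Cauchy. I would try to prove it by induction on the number of sides, collapsing a maximal arc of constant nonzero $\sigma$-label to a single geodesic segment and then invoking the strict hyperbolic cosine rule on the resulting shorter polygons to produce the required alternation.

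Granted the polygon lemma, Cauchy's double count closes the argument. Summing boundary sign changes over all faces of $Q$ gives a total of at least $4F^{\ast}$, where $F^{\ast}$ is the number of faces on which $\sigma$ is not identically zero. Reorganizing the same count by vertex, each vertex is trivalent, so its cyclic sequence of three edge labels supports at most two sign changes; this yields the upper bound $2V^{\ast}$. Combined with the trivalent identity $3V=2E$ and Euler's formula $V-E+F=2$ applied to the subcomplex on which $\sigma$ is nonzero, these two bounds are incompatible. Hence $\sigma\equiv 0$, every pair of corresponding edges has equal length, each pair of corresponding faces is congruent, and assembling the faces along the common combinatorial pattern produces an isometry $Q\to Q'$ of $\mathbb{H}^{3}$.

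The principal obstacle is the polygon sign-change lemma. Unlike the Euclidean Cauchy setting, the moduli space of convex hyperbolic $n$-gons with prescribed interior angles has positive dimension $n-3$, so genuine deformations exist and one must combine convexity with the strict monotonicity of the hyperbolic cosine law to rule out long monochromatic arcs. Once this planar lemma is secured the remainder is routine spherical trigonometry and combinatorial bookkeeping.
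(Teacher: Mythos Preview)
The paper does not give its own proof of this theorem; it simply quotes the result from Rivin--Hodgson and remarks that their argument is ``analogous to Cauchy's rigidity theorem,'' with a similar proof in Roeder--Hubbard--Dunbar. Your proposal is exactly this Cauchy-type argument---recover face angles from dihedral angles via the spherical link at each trivalent vertex, invoke a four-sign-change lemma for convex hyperbolic polygons with prescribed angles, and finish with an Euler-characteristic double count---so you are following the same route as the cited references, and you have correctly identified the hyperbolic polygon lemma as the only substantive step.

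One point of care: your closing combinatorial count is imprecise as written. The phrase ``Euler's formula $V-E+F=2$ applied to the subcomplex on which $\sigma$ is nonzero'' does not literally work, since that subgraph is generally neither trivalent nor a cell decomposition of $S^{2}$. The clean bookkeeping stays on the full $1$-skeleton: from $3V=2E$ and $V-E+F=2$ one has $2V=4F-8$, so the double count $4F^{+}\le N\le 2V=4F-8$ forces $F^{0}\ge 2$, i.e.\ at least two faces with $\sigma\equiv 0$; but every vertex of such a face carries two zero-labelled edges and hence contributes $0$ sign changes, which sharpens the upper bound on $N$ and drives the inequality to a contradiction. This refinement is routine once stated and is carried out in the references the paper cites.
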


\begin{remark}
A more direct approach to Theorem~\ref{T-1-8} is to follow the continuity method used in the works of Aleksandrov~\cite{Aleksandrov}, Andreev~\cite{Andreev}, Rivin-Hodgson~\cite{Rivin-Hodgson}, Rivin~\cite{Rivin} and Roeder-Hubbard-Dunbar~\cite{Roeder-Hubbard-Dunbar}. In particular, most arguments in~\cite{Andreev,Roeder-Hubbard-Dunbar} remain valid in our situation. The only substantial modification might be showing the property that all faces of the limit polyhedron are still parallelograms. Toward this end, Andreev~\cite{Andreev} and Roeder-Hubbard-Dunbar~\cite{Roeder-Hubbard-Dunbar} made the use of Gauss-Bonnet Formula, while we could proceed as in the third step of the proof of Lemma~\ref{L-3-4}.
\end{remark}

\section{Patterns of circles with interstices}\label{S-4}
It remains to consider those circle patterns possessing at least one interstice. Given such a circle pattern $\mathcal P$, using an appropriate M\"{o}bius transformation, we assume the infinity point belongs to an interstice. In this way $\mathcal P$ is regarded as a circle pattern on the complex plane $\mathbb C=\hat{\mathbb C}\setminus\{\infty\}$. We will prove the following result which extends the existence part of Marden-Rodin Theorem.

\begin{theorem}\label{T-4-1}
Let $\mathcal T$ be a triangulation of the sphere and let $\Theta:E\to [0,\pi)$ be a function satisfying conditions $\mathrm{\mathbf{(c1)}},\mathrm{\mathbf{(c2)}},
\mathrm{\mathbf{(c3)}},\mathrm{\mathbf{(c4)}}$
and the condition below:
\begin{itemize}
\item[$\mathrm{\mathbf{(g5)}}$] There exists a triangle of $\mathcal T$ with edges $e_a,e_b,e_c$ such that $\sum_{\mu=a,b,c}\Theta(e_\mu)<\pi$.
\end{itemize}
Then there exists an \textbf{irreducible} circle pattern $\mathcal P$ on the Riemann sphere $\hat{\mathbb C}$ with contact graph isomorphic to the $1$-skeleton of $\mathcal T$ and exterior intersection angles given by $\Theta$.
\end{theorem}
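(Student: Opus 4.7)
The proof will parallel the strategy of Section \ref{S-3}, but carried out in Euclidean rather than spherical geometry, since condition $\mathrm{\mathbf{(g5)}}$ together with Lemma \ref{L-2-7} guarantees that the limiting circle pattern will possess at least one interstice. Concretely, after applying a M\"{o}bius transformation sending $\infty$ into the interstice corresponding to the distinguished triangle $\triangle^\star$ with vertices $v_a,v_b,v_c$ and edges $e_a,e_b,e_c$ given by $\mathrm{\mathbf{(g5)}}$, I set up the Euclidean configuration space $M=\mathbb{C}^{|V|}\times(0,\infty)^{|V|}$ and its subspaces $M_E,M_{\mathcal T},M_G,M_{IG}$ defined exactly as in Section \ref{S-3} with the Euclidean inversive distance. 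I normalize by requiring, say, $z_a=0$, $z_b>0$, $\operatorname{Arg}(z_c)\in(0,\pi)$, and $r_a=1$, obtaining $M_{IG}^\star$ of dimension $|E|$. The smooth evaluation map $\mathcal{E}v:M_{IG}^\star\to Y:=[0,\pi)^{|E|}$ sends a configuration to its tuple of exterior intersection angles, and I must show $\Theta\in\mathcal{E}v(M_{IG}^\star)$.

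The topological degree argument proceeds by deforming $\Theta$ inside the admissible set $W_g$ (defined by $\mathrm{\mathbf{(c1)}}$--$\mathrm{\mathbf{(c4)}}$ and $\mathrm{\mathbf{(g5)}}$) to a non-obtuse target $\widetilde{\Theta}\in W_g\cap[0,\pi/2]^{|E|}$ for which Theorem \ref{T-1-1} (Marden--Rodin) yields a unique irreducible circle pattern, corresponding to a unique preimage point in $M_{IG}^\star$. A lemma analogous to Lemma \ref{L-3-3} -- averaging $\Theta$ with the vector $(\pi/3,\ldots,\pi/3)$ if needed, then perturbing to achieve regularity via Sard's Theorem -- produces such a $\widetilde{\Theta}$. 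Along the linear homotopy $\Theta(t)=(1-t)\widetilde{\Theta}+t\Theta$, every $\Theta(t)$ lies in $W_g$. Once I prove the analogue of Lemma \ref{L-3-4} -- that the preimage of this curve under $\mathcal{E}v$ lies in a relatively compact open set $\Lambda\subset M_{IG}^\star$ -- homotopy invariance gives $\deg(\mathcal{E}v,\Lambda,\Theta)=\pm1\neq 0$, and Theorem \ref{T-6-9} produces the desired pattern.

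The main obstacle, as in Section \ref{S-3}, is the compactness lemma. Given a sequence $\{(\mathbf{z}_n,\mathbf{r}_n)\}\subset\mathcal{E}v^{-1}(\gamma)$ of irreducible circle patterns, I must show subsequential convergence to a point in $M_{IG}^\star$. The Euclidean setting introduces a new failure mode that was absent in the spherical case: radii or centers could diverge to infinity. This is where $\mathrm{\mathbf{(g5)}}$ and the normalization of $\triangle^\star$ play their role -- since the three marked disks meet in angles summing to $<\pi$, Lemma \ref{L-2-7} forces a definite Euclidean interstice near them, anchoring the scale. The remaining steps mirror the proof of Lemma \ref{L-3-4}: no disk collapses to a point (adapting Proposition \ref{P-3-5} via the Euclidean version of Proposition \ref{P-3-6}, using formulas \eqref{E-2-5a}--\eqref{E-2-5c} of Lemma \ref{L-2-5} in Euclidean form, and using Lemma \ref{L-2-1} together with $\mathrm{\mathbf{(c1)}}$--$\mathrm{\mathbf{(c4)}}$ to rule out vertex collapses); no triangle of $\mathcal T(\mathbf{z}_n,\mathbf{r}_n)$ degenerates (Lemma \ref{L-2-4}); combinatorially non-adjacent disks remain disjoint in the limit (via Lemma \ref{L-3-2}, Lemma \ref{L-2-6}, and condition $\mathrm{\mathbf{(c2)}}$, with the bipyramid case handled via $\mathrm{\mathbf{(c2)}}$ or $\mathrm{\mathbf{(c4)}}$); and the limit remains irreducible (using an analogue of Proposition \ref{P-3-7} based on Lemma \ref{L-2-7}).

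Finally, with compactness in hand, I compute $\deg(\mathcal{E}v,\Lambda,\widetilde{\Theta})=\pm 1$ using the uniqueness of the Marden--Rodin pattern and regularity of $\widetilde{\Theta}$, then invoke homotopy invariance (Theorem \ref{T-6-8}) to transport the nonzero degree to $\Theta$. I expect the delicate point to be the Euclidean analogue of Proposition \ref{P-3-5}: ruling out radii tending to $0$ or $\infty$ simultaneously requires a careful accounting of apex curvatures over the subset $V_0$ of collapsing vertices, combined with the combinatorial inequality of Lemma \ref{L-2-1} applied to the link cycle around $V_0$, exactly as in the spherical case, but now with the anchoring provided by the fixed disk $D_a$ of radius $1$ rather than by the spherical area constraint.
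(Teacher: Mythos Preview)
Your overall strategy matches the paper's, but there is a dimension error in the normalization that breaks the degree argument. The four constraints $z_a=0$, $\operatorname{Im} z_b=0$, $r_a=1$ (with $\operatorname{Arg}(z_c)\in(0,\pi)$ only an open condition) cut the $(|E|+6)$-dimensional configuration space down to dimension $|E|+2$, not $|E|$. The underlying reason is that circle patterns on $\hat{\mathbb C}$ are classified up to the six-dimensional M\"obius group, while your normalization kills only the four-dimensional Euclidean similarity group; the two residual degrees of freedom correspond to the choice of which point of the interstice is sent to $\infty$. Thus even at a Marden--Rodin value $\widetilde\Theta$, the preimage $\mathcal{E}v^{-1}(\widetilde\Theta)$ in your $M_{IG}^\star$ is a two-dimensional submanifold, not a finite set, and $\deg(\mathcal{E}v,\Lambda,\Theta)$ is undefined. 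The paper repairs this by imposing two further constraints $r_a=r_b=r_c$ (and then $\sum_i r_i=1$ for scale), which selects a canonical Euclidean representative from each M\"obius class; with this normalization the compactness argument also runs more cleanly, since $\sum_i r_i=1$ bounds all radii above, and $r_a=r_b=r_c$ forces either all three base vertices into $V_0$ or none, the former being excluded by the scale constraint.

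Two minor points. First, the deformation target $(\pi/3,\dots,\pi/3)$ does not satisfy condition~$\mathrm{\mathbf{(g5)}}$ (the distinguished triangle has angle sum exactly $\pi$) nor the hypotheses of Theorem~\ref{T-1-1} (face triangles give equality in condition~$(i)$); the paper simply takes any regular value $\Theta^\diamond\in(0,\pi/4)^{|E|}$, which lies in $W_g$ and satisfies Marden--Rodin outright, so no averaging is needed. Second, when $\Theta(e)=0$ for some edge the target lies on the boundary of $(0,\pi)^{|E|}$ and degree theory does not apply directly; the paper treats this case separately by perturbing to $\Theta_\epsilon=\Theta+\epsilon$, producing patterns $\mathcal P_\epsilon$, and passing to a subsequential limit via the same compactness machinery.
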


On this occasion, we endow $\mathbb C$ with the Riemannian metric $\mathrm{d s}=|\mathrm{d}z|$. Then
$(\mathbb C,\mathrm{d s})$ is isometric to the Euclidean plane $\mathbb R^2$.

\subsection{Configuration spaces}
Recall that $V,E,F$ are the sets of vertices, edges and triangles of $\mathcal T$. Let $\triangle^\infty$ be the triangle with edges $e_a,e_b,e_c$ and let $v_a,v_b,v_c$ be the vertices opposite to $e_a,e_b,e_c$, respectively. For simplicity, we assume  the infinity point belongs to $\triangle^\infty$. Therefore, $\mathcal T'=\mathcal T\setminus\{\triangle^\infty\}$ is a triangulation of the region $\hat{\mathbb C}\setminus \triangle^\infty\subset\mathbb C$.

Set $Z=\mathbb C^{|V|}\times \mathbb R_{+}^{|V|}$. Note that $Z$ is a smooth manifold of dimension $|E|+6$ which parameterizes the space of patterns of $|V|$ disks on the complex plane $(\mathbb C,\mathrm{ds})$. As before, we call each point $(\mathbf{z},\mathbf{r})=(z_1,\cdots,z_{|V|},r_1,\cdots,r_{|V|})\in Z$  a configuration, since it gives a circle pattern
$\mathcal P=\{D_i\}_{i=1}^{|V|}$ on  $(\mathbb C,\mathrm{d s})$, where the disk $D_i$ is centered at $z_i$ and is of radius $r_i$.

For each edge $e=[v_i,v_j]\in E$, the inversive distance is defined as
\[
I(e,\mathbf{z},\mathbf{r})\,=\,\frac{|z_i-z_j|^2 -r_i^2-r_j^2}{2r_i r_j}.
\]
The subspace $Z_{E}\subset Z$ is the set of configurations subject to
\[
-1\,<\,I(e,\mathbf{z},\mathbf{r})\,<\,1
\]
for every $e\in E$. We are interested in the subspace $Z_{\mathcal T}\subset Z_{E}$ of configurations which give $\mathcal T'$-type circle patterns. More precisely, $(\mathbf{z},\mathbf{r})\in Z_{\mathcal T}$ if and only if there exists a geodesic triangulation
$\mathcal T'(\mathbf{z},\mathbf{r})$ embedding into
$(\mathbb C,\mathrm{d s})$ with the following properties:
\begin{itemize}
\item[$\mathrm{\langle \mathbf{y1}\rangle}$] $\mathcal T'(\mathbf{z},\mathbf{r})$ is isotopic to $\mathcal T'$;
\item[$\mathrm{\langle \mathbf{y2}\rangle}$]The vertices of $\mathcal T'(\mathbf{z},\mathbf{r})$ coincide with the centers $z_1,z_2,\cdots,z_{|V|}$.
\end{itemize}
Let $Z_{G}\subset Z_{\mathcal T}$  consist of configurations under the further restriction:
\begin{itemize}
\item[$\mathrm{\langle \mathbf{y3}\rangle}$] The disks
$D_\alpha, D_\beta$ are disjoint whenever there is no edge between $v_\alpha$ and $v_\beta$.
\end{itemize}
Note that the subspaces $Z_{E},Z_{\mathcal T}, Z_G$ are all open in $Z$ and thus are smooth manifolds of dimension $|E|+6$.

\begin{figure}[htbp]\centering
\includegraphics[width=0.49\textwidth]{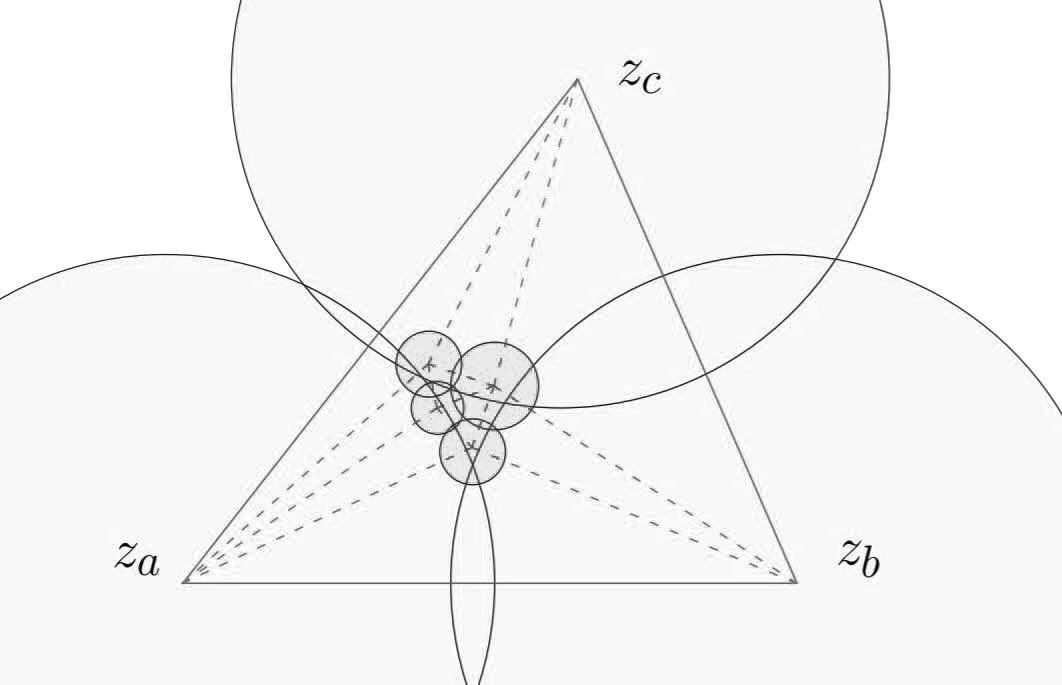}
\caption{A normalized planar circle pattern}\label{F-9}
\end{figure}

In the end we use $Z^\star_{G}\subset Z_{G}$ to represent the subspace of configurations such that the following normalization conditions hold:
\begin{itemize}
\item[$\mathrm{\langle \mathbf{y4}\rangle}$] $z_a\,=\,0$,\;\, $z_b\,>\,0$,\;\,$0\,<\,\operatorname{Arg}(z_c)\,<\,\pi$; \item[$\mathrm{\langle \mathbf{y5}\rangle}$]$r_a\,=\,r_b\,=\,r_c$;
\item[$\mathrm{\langle \mathbf{y6}\rangle}$] $\sum\nolimits_{i=1}^{|V|}r_i\,=\,1$.
\end{itemize}
It is easy to see $Z^\star_{G}$ is a smooth manifold of dimension $|E|$. Meanwhile, we have the following smooth map
\[
\begin{aligned}
\mathcal{R}v:\quad &\;Z^\star_{G} &\longrightarrow \quad &\quad\;  Y:=(0,\pi)^{|E|}\\
&(\mathbf{z},\mathbf{r})  &\longmapsto \quad  & \big(\Theta(e_1,\cdot),\Theta(e_2,\cdot),\cdots\big),
\end{aligned}
\]
where $\Theta(e,\cdot)=\arccos I(e,\cdot)$.

\begin{lemma}\label{L-4-2}
Suppose $\mathcal P$ is a $\mathcal T'$-type circle pattern on
$(\mathbb C,\mathrm{d s})$ whose exterior intersection angle function $\Theta: E\to[0,\pi)$ satisfies conditions $\mathrm{\mathbf{(c1)}}$ and $\mathrm{\mathbf{(g5)}}$. Let $v_\alpha,v_\beta\in V$ be a pair of non-adjacent vertices. For any
$p\in D_{\alpha}\cap D_{\beta}$, there exists
$v_\eta\in V\setminus\{v_\alpha,v_\beta\}$ such that $p\in\mathbb D_{\eta}$.
\end{lemma}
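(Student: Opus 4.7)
The plan is to mirror the proof of Lemma~\ref{L-3-2}. It is cleanest to work on the sphere $\hat{\mathbb C}$, lifting the planar pattern via inverse stereographic projection so that the infinity point is sent into the interstice corresponding to $\triangle^\infty$. Condition $\mathrm{\mathbf{(g5)}}$ enters here via Lemma~\ref{L-2-7}, which gives $D_a\cap D_b\cap D_c=\emptyset$ and hence guarantees that the infinity really does sit in a genuine interstice bounded by the three arcs $e_a,e_b,e_c$. On the sphere the pattern is then a $\mathcal T$-type circle pattern (with one interstice), every vertex acquires a complete flower in $\mathcal T$, and condition $\mathrm{\mathbf{(c1)}}$ holds on every triangle of $\mathcal T$, including $\triangle^\infty$ itself (which is part of the hypothesis of Lemma~\ref{L-4-2}).

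For each vertex $v_\alpha$ I define $\operatorname{\mathbb{FL}}_\alpha := S(\{v_\alpha\}) \cup \bigcup_{\mu\sim\alpha}\mathbb D_\mu$, where $S(\{v_\alpha\})$ is the open star of $v_\alpha$ in $\mathcal T$ (so for $v_\alpha\in\{v_a,v_b,v_c\}$ the open triangle $\triangle^\infty$ is part of the star). The crucial step is the containment $D_\alpha \subset \operatorname{\mathbb{FL}}_\alpha$. Following Lemma~\ref{L-3-2}, I first show $\partial D_\alpha \subset \operatorname{\mathbb{FL}}_\alpha$ via Lemma~\ref{L-2-6}: if $\partial D_\alpha$ escaped $\operatorname{\mathbb{FL}}_\alpha$ between two successive neighbors $v_{\mu_0},v_{\mu_0+1}$ of $v_\alpha$, then $D_{\mu_0}\cap D_{\mu_0+1}\subset D_\alpha$, forcing
\[
\Theta([v_{\mu_0},v_\alpha]) + \Theta([v_\alpha,v_{\mu_0+1}]) \,\ge\, \pi + \Theta([v_{\mu_0},v_{\mu_0+1}]),
\]
contradicting $\mathrm{\mathbf{(c1)}}$ on the triangle $(v_\alpha,v_{\mu_0},v_{\mu_0+1})$. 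The polar interpolation $\{J_t\}_{0\le t\le 1}$ from $\partial S(\{v_\alpha\})$ to $\partial D_\alpha$ then upgrades this to $D_\alpha\subset\operatorname{\mathbb{FL}}_\alpha$, exactly as in Lemma~\ref{L-3-2}.

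With this containment established for every $v_\alpha$, the lemma follows quickly: if $p\in D_\alpha\cap D_\beta$ were not in any $\mathbb D_\eta$ for $\eta\ne\alpha,\beta$, then $p\notin\mathbb D_\mu$ for any $\mu\sim\alpha$ (non-adjacency excludes $\mu=\beta$), so $p\in S(\{v_\alpha\})$; symmetrically $p\in S(\{v_\beta\})$. But the open stars of two non-adjacent vertices of $\mathcal T$ are disjoint, a contradiction; translating back to the plane yields the lemma. The main technical obstacle I anticipate is the possibility $\Theta=0$ on some edge (tangent neighboring disks), a case excluded from Lemma~\ref{L-3-2}: at such a tangency $q\in\partial D_\mu\cap\partial D_{\mu+1}$ on the link of $v_\alpha$, the open disks $\mathbb D_\mu$ and $\mathbb D_{\mu+1}$ miss $q$. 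However, Lemma~\ref{L-2-6} applied with $\Theta([v_\mu,v_{\mu+1}])=0$ together with $\mathrm{\mathbf{(c1)}}$ on $(v_\alpha,v_\mu,v_{\mu+1})$ shows $q\notin\mathbb D_\alpha$, so these finitely many isolated tangencies do not disrupt the interpolation argument or the final case analysis for the open disks $\mathbb D_\eta$.
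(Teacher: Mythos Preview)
Your idea of lifting to the sphere and invoking Lemma~\ref{L-3-2} verbatim is appealing, but there is a genuine gap in the step ``on the sphere the pattern is then a $\mathcal T$-type circle pattern.'' Inverse stereographic projection sends Euclidean circles to spherical circles, but it does \emph{not} send the Euclidean center of a disk to the spherical center of its image. The paper's definition of ``$\mathcal T$-type'' (see the subspace $M_{\mathcal T}$) requires a spherical geodesic triangulation whose vertices are exactly the spherical centers $z_i$; the triangulation you obtain by lifting $\mathcal T'(\mathbf z,\mathbf r)$ has its vertices at the \emph{wrong} points, and you give no argument that the spherical centers can be reconnected by spherical geodesics into an embedded triangulation isotopic to $\mathcal T$. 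Since the polar interpolation in the proof of Lemma~\ref{L-3-2} is taken about the disk center $z_\alpha$ and relies on $\partial S(\{v_\alpha\})$ being star-shaped about that same point, this is not a cosmetic issue.

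Your treatment of the tangency case $\Theta(e)=0$ is also incomplete. The observation that the tangency point $q$ satisfies $q\notin D_\alpha$ is correct (Lemma~\ref{L-2-6} plus $\mathrm{\mathbf{(c1)}}$ indeed force this), but it does not by itself repair the interpolation: one still has $q\in\partial S(\{v_\alpha\})\setminus\operatorname{\mathbb{FL}}_\alpha$, so the starting region $J_0=\overline{S(\{v_\alpha\})}$ fails the inclusion $J_0\subset\operatorname{\mathbb{FL}}_\alpha$ and the open--closed argument loses its anchor at $t=0$. You would need to say explicitly how to modify the homotopy (e.g.\ start at some $J_\epsilon$ with $\epsilon>0$).

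The paper avoids both difficulties at once by staying in the plane: it treats the unbounded region $\mathbb C\setminus\mathcal T'$ as the triangle $\triangle^\infty$ and simply replaces $\overline{S(\{v_\alpha\})}$ by an arbitrary compact Jordan region $\Upsilon_\alpha\subset S(\{v_\alpha\})$ as the initial set $J_0$ in the interpolation. Since $\Upsilon_\alpha$ is compactly contained in the open star, one has $\partial\Upsilon_\alpha\subset S(\{v_\alpha\})\subset\operatorname{\mathbb{FL}}_\alpha$ automatically, regardless of whether $S(\{v_\alpha\})$ is unbounded or whether some link edge carries $\Theta=0$. After that substitution the rest of the proof is literally identical to Lemma~\ref{L-3-2}, which is why the paper omits further detail.
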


\begin{proof}
We treat the region $\mathbb C\setminus \mathcal T'$ as a triangle with vertices $v_a,v_b,v_c$ and define  $S\big(\{v_\alpha\}\big),$ $ \operatorname{\mathbb{FL}_\alpha}$ as Lemma~\ref{L-3-2}. Similarly, we want to show
\[
D_\alpha\,\subset\,\operatorname{\mathbb{FL}_\alpha}.
\]
To bypass complexities that $S\big(\{v_\alpha\}\big)$ is unbounded or $\partial S\big(\{v_\alpha\}\big)\nsubseteq {\mathbb{FL}_\alpha}$ (when $\Theta(e)=0$), we choose a closed Jordan region $\Upsilon_\alpha\subset S\big(\{v_\alpha\}\big)$. Note that
\[
\partial \Upsilon_\alpha\,\subset\, \Upsilon_\alpha\, \subset\, S\big(\{v_\alpha\}\big)\,\subset\, \operatorname{\mathbb{FL}}_\alpha.
\]
We can construct a family of closed Jordan regions $\{J_t\}_{0\leq t\leq 1}$ satisfying
\[
J_0\,=\,\Upsilon_\alpha,\quad\, J_1\,=\,D_\alpha, \quad\,\partial J_t\,\subset\, \operatorname{\mathbb{FL}}_\alpha.
\]
The remainder part of the proof is parallel to that of Lemma~\ref{L-3-2}. We omit the details.
\end{proof}

\subsection{Topological degree}
First we deal with the case that $\Theta(e)>0$ for every $e\in E$. For any function $\Theta: E\to(0,\pi)$ which satisfies conditions $\mathrm{\mathbf{(c1)}}-\mathrm{\mathbf{(c4)}}$ and $\mathrm{\mathbf{(g5)}}$, it suffices to show it is in the image of the map $\mathcal{R}v$. Recall that
\[
\dim(Z^\star_{G})\,=\,\dim (Y)\,=\,|E|.
\]
To this end, we still make the use of topological degree theory. Specifically, we need to find a relatively compact open set
$\Omega\subset Z^\star_{G}$ and determine the degree $\deg(\mathcal{R}v,\Omega,\Theta)$.

Following the strategy in last section, we deform $\Theta$ to another value such that the degree is relatively easier to manipulate.
Let $W_{g}$ denote the set of functions satisfying conditions $\mathrm{\mathbf{(c1)}}-\mathrm{\mathbf{(c4)}}$ and $\mathrm{\mathbf{(g5)}}$. By Sard's Theorem, there exists at least one regular value $\Theta^\diamond\in(0,\pi/4)^{|E|}\subset W_g$ of the map $\mathcal{R}v$. For $t\in[0,1]$, set
 \[
 \Theta(t)\,=\,(1-t)\Theta^\diamond+t\Theta.
 \]
Note that $\big\{\Theta(t)\big\}_{0\leq t\leq 1}$ form a continuous curve $\gamma$ in $W_g$. We need the following result.

\begin{lemma}\label{L-4-3}
There exists a relatively compact open subset $\Omega\subset Z^\star_{G}$ such that
\[
\mathcal{R}v^{-1}(\gamma)\,\subset\,\Omega.
\]
\end{lemma}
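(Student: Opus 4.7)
The plan is to mirror the proof of Lemma~\ref{L-3-4}, now in Euclidean geometry. I take any sequence $\{(\mathbf{z}_n,\mathbf{r}_n)\}\subset \mathcal{R}v^{-1}(\gamma)$, corresponding to normalized patterns $\mathcal P_n$ realizing $(\mathcal T',\Theta_n)$ with $\Theta_n=(1-t_n)\Theta^\diamond+t_n\Theta$ for some $t_n\in[0,1]$. The normalization $\mathrm{\langle \mathbf{y6}\rangle}$ bounds every $r_{i,n}\le 1$, and since adjacent disks satisfy $|z_i-z_j|\le r_i+r_j$ and the $1$-skeleton of $\mathcal T'$ is connected, the anchor $z_{a,n}=0$ keeps every $z_{i,n}$ in a fixed compact subset of $\mathbb C$. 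Up to a subsequence, $(\mathbf{z}_{n_k},\mathbf{r}_{n_k})\to(\mathbf{z}_\infty,\mathbf{r}_\infty)\in \mathbb C^{|V|}\times[0,1]^{|V|}$ with $t_{n_k}\to t_\infty$.

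The crucial task, paralleling Proposition~\ref{P-3-5}, is to exclude $r_{i,n_k}\to 0$ for any $i$. I set $V_0=\{v_i:r_{i,n_k}\to 0\}$ and, via the Euclidean part of Lemma~\ref{L-2-5}, compute the limit of $\sum_{v_i\in V_0}K_{i,n_k}$ along the lines of Proposition~\ref{P-3-6}. At interior vertices one still has $K_{i,n_k}=2\pi-\sigma_{i,n_k}=0$, while at the three boundary vertices $v_a,v_b,v_c$ the analogous bookkeeping carries a total defect $\pi$ corresponding to the angles of the image triangle $z_{a,n_k}z_{b,n_k}z_{c,n_k}$. Combined with Lemma~\ref{L-2-1}, one obtains along some connected component $A_{\tau_0}$ of $V_0$ an inequality of the form
\[
\sum\nolimits_{\mu=1}^k\Theta_\infty(e_\mu)\,\ge\,(k-2)\pi,
\]
where $e_1,\dots,e_k$ form the simple closed curve $\Gamma$ bounding the support of $A_{\tau_0}$. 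When $\Gamma$ does not bound a single triangle or a pair of adjacent triangles of $\mathcal T$, this violates $\mathrm{\mathbf{(c3)}}$ or $\mathrm{\mathbf{(c4)}}$. The borderline cases, in which $\Gamma$ bounds either $\triangle^\infty$ itself or $\triangle^\infty$ together with one adjacent triangle, are handled using the strict inequality supplied by $\mathrm{\mathbf{(g5)}}$ for $\triangle^\infty$ together with $\mathrm{\mathbf{(c1)}},\mathrm{\mathbf{(c2)}}$. Hence $V_0=\emptyset$.

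With every radius bounded in $(0,1)$, I verify $(\mathbf{z}_\infty,\mathbf{r}_\infty)\in Z^\star_G$ in sub-steps parallel to Lemma~\ref{L-3-4}. Membership in $Z_E$ is immediate from continuity of the inversive distances; membership in $Z_{\mathcal T}$ follows because the triangle inequalities guaranteed by the Euclidean part of Lemma~\ref{L-2-4} survive the limit; and membership in $Z_G$ is obtained by case analysis on $D_{\alpha,\infty}\cap D_{\beta,\infty}$ for non-adjacent pairs $v_\alpha,v_\beta$, using Lemma~\ref{L-4-2} in place of Lemma~\ref{L-3-2} and Lemma~\ref{L-2-6} to contradict $\mathrm{\mathbf{(c2)}}$ when the intersection reduces to a single point. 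The pathological case $\partial D_{\alpha,\infty}=\partial D_{\beta,\infty}$ that appeared in Lemma~\ref{L-3-4} cannot arise here, since two Euclidean disks with the same boundary necessarily coincide. Finally, the normalization $\mathrm{\langle \mathbf{y4}\rangle}$--$\mathrm{\langle \mathbf{y6}\rangle}$ passes to the limit because $r_{a,n_k}=r_{b,n_k}=r_{c,n_k}$ remains bounded away from $0$.

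The main obstacle will be the apex-curvature accounting at the three boundary vertices $v_a,v_b,v_c$. Unlike the closed spherical case of Proposition~\ref{P-3-6}, the Euclidean triangulation $\mathcal T'$ has a boundary whose interior angle sum at $\{v_a,v_b,v_c\}$ is exactly $\pi$, and this boundary defect must be balanced precisely against the strict inequality furnished by $\mathrm{\mathbf{(g5)}}$ to rule out the borderline combinatorial configurations of $V_0$. Getting this interplay right is exactly what makes condition $\mathrm{\mathbf{(g5)}}$ the Euclidean substitute for $\mathrm{\mathbf{(m5)}}$.
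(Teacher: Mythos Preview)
Your overall scheme follows the paper's, but you have manufactured an obstacle that the paper dissolves in one line, and you never actually overcome it. The ``main obstacle'' you flag---the apex-curvature accounting at the boundary vertices $v_a,v_b,v_c$---is not attacked at all in the paper's argument. Instead, the paper uses the normalizations $\mathrm{\langle\mathbf{y5}\rangle}$ and $\mathrm{\langle\mathbf{y6}\rangle}$ to exclude these vertices from $V_0$ outright: since $r_{a,n}=r_{b,n}=r_{c,n}$, either all three belong to $V_0$ or none do; if all three do, then the outer triangle $z_{a,n_k}z_{b,n_k}z_{c,n_k}$ (which encloses the whole realization of $\mathcal T'$) shrinks to a point, so every edge length and hence every radius tends to $0$, contradicting $\sum_i r_{i,n}=1$. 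Thus $V_0\subset V\setminus\{v_a,v_b,v_c\}$, every vertex in $V_0$ is interior, $K_{i,n_k}=0$ on $V_0$, and the curvature identity \eqref{E-4-2} follows verbatim from Proposition~\ref{P-3-6} with no boundary correction whatsoever. Your proposed ``total defect $\pi$'' bookkeeping is neither needed nor carried out, and as written your proposal leaves the case $\{v_a,v_b,v_c\}\subset V_0$ unresolved.

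Once $V_0\cap\{v_a,v_b,v_c\}=\emptyset$ is known, your treatment of the borderline configurations---using $\mathrm{\mathbf{(g5)}}$ together with $\mathrm{\mathbf{(c1)}}$ to get a strict inequality when $\Gamma$ bounds $\triangle^\infty$ or $\triangle^\infty$ plus one adjacent triangle---is a legitimate and arguably cleaner alternative to the paper's route, which simply invokes ``proceeding as in the proof of Proposition~\ref{P-3-5}'' (i.e.\ the geometric argument with the marked points $q_a,q_b$ and the non-degeneracy of $D_a,D_b,D_c$). Your observation that the spherical case $\partial D_{\alpha,\infty}=\partial D_{\beta,\infty}$ cannot occur for Euclidean disks is also correct and matches the paper's omission of that sub-case. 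So the remaining steps ($Z_E$, $Z_{\mathcal T}$, $Z_G$, $Z_G^\star$) are fine; the gap is entirely in your non-degeneracy step for the radii.
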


As before, it suffices to show any sequence  $\big\{(\mathbf{z}_n,\mathbf{r}_n)\big\}\subset \mathcal{R}v^{-1}(\gamma)$ contains a convergent subsequence in $Z^\star_{G}$. Note that each configuration $(\mathbf{z}_n,\mathbf{r}_n)$ gives a normalized circle pattern $\mathcal P_n=\{D_{i,n}\}_{i=1}^{|V|}$ embedding into $(\mathbb C,\mathrm{ds})$ and realizing the data $(\mathcal T,\Theta_n)$, where
\[
\Theta_n\,=\,(1-t_n)\Theta^\diamond+t_n\Theta
\]
for some $t_n\in[0,1]$. For every $v_i\in V\setminus\{v_a,v_b,v_c\}$, the apex curvature $K_{i,n}$ is defined as
\[
K_{i,n}\,=\,2\pi-\sigma_{i,n},
\]
where $\sigma_{i,n}$ denotes the sum of inner angles at $v_i$ for all triangles of $\mathcal T'(\mathbf{z}_n,\mathbf{r}_n)$ incident to the vertex $v_i$. Obviously we have
\begin{equation}\label{E-4-1}
K_{i,n}\,=\,0.
\end{equation}

\begin{proposition}\label{P-4-4}
There is a subsequence of $\big\{(\mathbf{z}_{n},\mathbf{r}_{n})\big\}$  converging to a point $(\mathbf{z}_\infty,\mathbf{r}_\infty)\in Z$.
\end{proposition}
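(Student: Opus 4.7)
The strategy mirrors Proposition~\ref{P-3-5}, now adapted to the Euclidean normalizations $r_a = r_b = r_c$ and $\sum_{i=1}^{|V|} r_i = 1$. First I would note the easy bounds: the identity $\sum r_i = 1$ forces every $r_{i,n} \in [0,1]$, while $|z_i - z_j| \leq r_i + r_j \leq 1$ along each edge of $\mathcal T$, combined with the connectedness of the $1$-skeleton and the normalization $z_{a,n} = 0$, gives $|z_{i,n}| \leq |V|-1$. Hence a subsequence $(\mathbf z_{n_k}, \mathbf r_{n_k})$ converges to some $(\mathbf z_\infty, \mathbf r_\infty)$ in the compact set $\mathbb C^{|V|} \times [0,1]^{|V|}$, and the whole difficulty is to show that no coordinate of $\mathbf r_\infty$ vanishes.

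Suppose $V_0 := \{v_i : r_{i,n_k} \to 0\}$ is nonempty; since $\sum r_i = 1$, $V_0 \neq V$. Because $r_{a,n} = r_{b,n} = r_{c,n}$, either $\{v_a,v_b,v_c\} \subseteq V_0$ or $V_0 \cap \{v_a,v_b,v_c\} = \emptyset$, and the argument splits accordingly. The first case admits a direct geometric argument: the identity $|z_{\mu,n_k} - z_{\nu,n_k}| = 2 r_{a,n_k} \cos(\Theta_{n_k}(e)/2)$ for each boundary edge of $\mathcal T'$ shows that the outer triangle $\triangle(z_{a,n_k}, z_{b,n_k}, z_{c,n_k})$ --- which contains the entire realization $\mathcal T'(\mathbf z_{n_k}, \mathbf r_{n_k})$ --- collapses to the origin. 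Then $z_{i,n_k} \to 0$ for every $i$, and $|z_i - z_j|^2 = r_i^2 + r_j^2 + 2 r_i r_j \cos \Theta_\infty([v_i,v_j])$ with $\cos \Theta_\infty > -1$ forces every pair $(r_{i,n_k}, r_{j,n_k})$ along an edge to tend to zero. By connectedness, all radii tend to zero, contradicting $\sum r_i = 1$.

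In the substantive second case $V_0 \cap \{v_a,v_b,v_c\} = \emptyset$, I would follow the apex-curvature argument of Proposition~\ref{P-3-5}. The apex curvatures $K_{i,n_k}$ vanish at every interior vertex in $V_0$, so the Euclidean analogue of Proposition~\ref{P-3-6} (whose inputs Lemma~\ref{L-2-5} supplies in both geometries) gives
\[
0 \;=\; -\sum_{(e,u) \in Lk'(V_0)} \bigl[\pi - \Theta_\infty(e)\bigr] + 2\pi \chi\bigl(S'(V_0)\bigr),
\]
where $Lk'(V_0)$ and $S'(V_0)$ are defined inside $\mathcal T'$ exactly as in Section~\ref{S-3}. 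Some component $S'_{\tau_0}$ must satisfy $-\sum [\pi - \Theta_\infty(e)] + 2\pi\chi(S'_{\tau_0}) \geq 0$; non-simply-connected components are ruled out because $\Theta_\infty < \pi$ on every edge. Thus $S'_{\tau_0}$ is a topological disk bounded by a simple closed curve $\Gamma = e_1 \cup \cdots \cup e_k$, and $\sum_{\mu=1}^k \Theta_\infty(e_\mu) \geq (k-2)\pi$.

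The main obstacle is to extract a strict reverse inequality from the side conditions preserved along $\gamma$. Let $\mathbb J$ denote the side of $\Gamma$ on the sphere $\mathcal T$ containing $\triangle^\infty$; the $V_0$-side houses interior vertices of $\mathcal T'$, so it cannot consist of one or two triangles, and any degenerate combinatorics of $\Gamma$ must lie on the $\mathbb J$-side. If $\mathbb J$ contains more than two triangles, Lemma~\ref{L-2-1} furnishes the strict reverse inequality and we are done. If $\mathbb J = \triangle^\infty$, so $k = 3$, then condition $\mathrm{\mathbf{(g5)}}$ --- preserved along $\gamma$ because $\Theta^\diamond \in (0,\pi/4)^{|E|}$ also satisfies it --- yields $\sum_{\mu=1}^3 \Theta_\infty(e_\mu) < \pi$, contradicting the apex-curvature equality. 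The hardest subcase is $\mathbb J = \triangle^\infty \cup \triangle^1$, two triangles sharing an edge, in which Lemma~\ref{L-2-1} supplies only the non-strict bound $\sum \Theta_\infty \leq 2\pi$; here I would add condition $\mathrm{\mathbf{(g5)}}$ on $\triangle^\infty$ to the appropriate inequality of $\mathrm{\mathbf{(c1)}}$ applied to $\triangle^1$, so that the shared edge cancels and one obtains $\sum_{\mu=1}^4 \Theta_\infty(e_\mu) < 2\pi$ strictly, once again contradicting the apex-curvature equality. This completes the exclusion of vanishing radii and establishes the proposition.
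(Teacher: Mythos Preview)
Your argument is correct, and the broad architecture---compactness from the normalizations, then ruling out $V_0\neq\emptyset$ via the apex-curvature identity and a component analysis---matches the paper. There is one small slip: the link boundary $\Gamma$ of a simply connected component $S'_{\tau_0}$ need not be a \emph{simple} closed curve (it can pinch), so you should phrase the inequality for the simple curve $\partial\mathbb J$ with $k_0\leq k$ edges and then absorb the remaining $k-k_0$ edges using $\Theta_\infty<\pi$, exactly as in Proposition~\ref{P-3-5}.

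Where you genuinely depart from the paper is in the subcase $|J|\leq 2$. For $|V|>4$ the paper simply invokes ``proceeding as in the proof of Proposition~\ref{P-3-5}'', which means the \emph{metric} argument: pick the two interstice points $q_a,q_b$ on the boundaries of the marked disks $D_a,D_b,D_c$, show they must coalesce along the chain of shrinking disks in $A_{\tau_0}$, and contradict this with the nondegeneracy of the limit configuration of $D_{a,\infty},D_{b,\infty},D_{c,\infty}$ (their common radius stays positive since $\{v_a,v_b,v_c\}\cap V_0=\emptyset$). You instead exploit condition $\mathrm{\mathbf{(g5)}}$ directly: for $J=\triangle^\infty$ it alone gives the strict bound, and for $J=\triangle^\infty\cup\triangle^1$ you add it to the appropriate $\mathrm{\mathbf{(c1)}}$ inequality on $\triangle^1$ so the shared edge cancels. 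This is a cleaner, purely combinatorial route that also absorbs the paper's separate $|V|=4$ case (which is precisely your $J=\triangle^\infty$ subcase) into the main argument. The trade-off is that your method is specific to Section~\ref{S-4}, where $\mathrm{\mathbf{(g5)}}$ is available; the paper's metric argument, by contrast, is what makes Proposition~\ref{P-3-5} go through in the spherical setting where no such condition holds.
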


\begin{proof}
We reduce the proof to verifying the following two properties:
\begin{itemize}
\item[$(i)$] For $i=1,2,\cdots,|V|$, the radius $r_{i,n}$ is bounded from below and above by positive constants;
\item[$(ii)$] For $i=1,2,\cdots,|V|$, the coordinate $z_{i,n}$ is bounded.
\end{itemize}

We begin with the first property. Under normalization condition $\mathrm{\langle \mathbf{y6}\rangle}$, it is easy to see  each $r_{i,n}$ is bounded from
 above by $1$. In addition, there is a subsequence $\{\mathbf{r}_{n_k}\}$ converging to some vector $\mathbf{r}_\infty\in [0,1]^{|V|}$. Next we show $r_{i,n}$ is bounded from below by a positive constant. Assume it is not true. Then there exists $v_{i_0}\in V$ satisfying
\[
r_{i_0,n_k}\,\to\,0.
\]
Without loss of generality, suppose $t_{n_k}$ converges to $t_\infty\in[0,1]$. Otherwise, one extracts a convergent subsequence to achieve the goal. Let $V_0\subset V$ denote the set of vertices $v_i\in V$ for which $r_{i,n_k}\to 0$. Due to normalization condition
$\mathrm{\langle \mathbf{y5}\rangle}$, we have
\[
v_a,v_b,v_c\,\in\, V_0\quad\, \text{or}\quad\, \{v_a,v_b,v_c\}\cap V_0\,=\,\emptyset.
\]
The first case never occurs. Otherwise, every triangle of
$\mathcal T'(\mathbf{z}_{n_k},\mathbf{r}_{n_k})$ degenerates to a point, which implies every radius $r_{i,n_k}$ tends to zero. In view of condition $\mathrm{\langle \mathbf{y6}\rangle}$, this leads to a contradiction. We thus deduce that $V_0$ is a non-empty subset of $V\setminus\{v_a,v_b,v_c\}$.

Furthermore, by Lemma~\ref{L-2-5}, similar reasoning to Proposition~\ref{P-3-6} yields
\[
\sum\nolimits_{v_i\in V_0}K_{i,n_k}\,\to\, -\sum\nolimits_{(e,u)\in Lk(V_0)}\big[\pi-(1-t_\infty)\Theta^\diamond(e)-t_\infty\Theta(e)\big]+2\pi\chi\big(S(V_0)\big).
\]
Together with~\eqref{E-4-1}, this gives
\begin{equation}\label{E-4-2}
0\,=\,-\sum\nolimits_{(e,u)\in Lk(V_0)}\big[\pi-(1-t_\infty)\Theta^\diamond(e)-t_\infty\Theta(e)\big]+2\pi\chi\big(S(V_0)\big).
\end{equation}
Suppose the triangulation $\mathcal T$ possesses more than four vertices. Proceeding as in the proof of Proposition~\ref{P-3-5}, we prove that~\eqref{E-4-2} leads to contradictions. Provided $\mathcal T$ possesses exactly four vertices, then $\mathcal T$ is the boundary of a tetrahedra and $V_0$ consists of only one vertex. What is more, we reduce~\eqref{E-4-2} to
\[
\sum\nolimits_{\mu=a,b,c}(1-t_\infty)\Theta^\diamond(e_\mu)+\sum\nolimits_{\mu=a,b,c}t_\infty\Theta(e_\mu)\,=\,\pi.
\]
Recalling that $0<\Theta^\diamond<\pi/4$, we get $\sum\nolimits_{\mu=a,b,c}\Theta(e_\mu)\geq\pi$, which contradicts condition $\mathrm{\mathbf{(g5)}}$.

It remains to check the second property. Because every $r_{i,n}$ is bounded, every triangle of $\mathcal T'(\mathbf{z}_{n},\mathbf{r}_{n})$ is of finite size. Meanwhile, by condition
$\mathrm{\langle \mathbf{y4}\rangle}$, the coordinate $z_{a,n}$ is fixed. Putting these relations together, we conclude that every $z_{i,n}$ is bounded.
\end{proof}

Now it is ready to demonstrate Lemma~\ref{L-4-3}.
\begin{proof}[\textbf{Proof of Lemma~\ref{L-4-3}}]
Proposition \ref{P-4-4} indicates there exists a subsequence $\big\{(\mathbf{z}_{n_k},\mathbf{r}_{n_k})\big\}$  such that
\[
(\mathbf{z}_{n_k},\mathbf{r}_{n_k})
\,\to\,(\mathbf{z}_\infty,\mathbf{r}_\infty)\,\in\,Z.
\]
Suppose $\mathcal P_\infty=\{D_{i,\infty}\}_{i=1}^{|V|}$ is the circle pattern given by $(\mathbf{z}_\infty,\mathbf{r}_\infty)$ and  $\Theta_\infty$ is the exterior intersection angle function of $\mathcal P_\infty$. We divide the proof into the following steps:

\textbf{Step1}, $(\mathbf{z}_\infty,\mathbf{r}_\infty)\in Z_E$. Since $\Theta_\infty=(1-t_\infty)\Theta^\diamond+t_\infty\Theta$, the assertion is straightforward.

\textbf{Step2}, $(\mathbf{z}_\infty,\mathbf{r}_\infty)\in Z_{\mathcal T}$. By Lemma~\ref{L-2-4}, the statement follows verbatim from the second step of the proof of Lemma~\ref{L-3-4}.

\textbf{Step3}, $(\mathbf{z}_\infty,\mathbf{r}_\infty)\in Z_{G}$. For each pair of non-adjacent vertices $v_\alpha,v_\beta$, we need to show the disks $D_{\alpha,\infty}, D_{\beta,\infty}$ are disjoint. Assume on the contrary that $D_{\alpha,\infty}\cap D_{\beta,\infty}\neq\emptyset$. Then either $D_{\alpha,\infty}\cap D_{\beta,\infty}$ contains interior points or $D_{\alpha,\infty}\cap D_{\beta,\infty}$ consists of a single point. In light of Lemma~\ref{L-4-2} and Lemma~\ref{L-2-6}, similar arguments to the third step of the proof of Lemma~\ref{L-3-4} lead to contradictions.

\textbf{Step4}, $(\mathbf{z}_\infty,\mathbf{r}_\infty)\in Z_{G}^\star$. It is an immediate consequence of Lemma~\ref{L-2-4}.
\end{proof}

As a result, we determine the degree of the map $\mathcal{R}v$.
\begin{theorem}\label{T-4-5}
Let $\Omega$ be as above. Then
\[
\deg(\mathcal{R}v,\Omega,\Theta)\,=\,1
\;\;\text{\emph{or}}\;\;\deg(\mathcal{R}v,\Omega,\Theta)\,=\,-1.
\]
\end{theorem}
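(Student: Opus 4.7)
The approach mirrors the proof of Theorem~\ref{T-3-10} in the Euclidean setting. The strategy is first to compute $\deg(\mathcal{R}v, \Omega, \Theta^\diamond)$ at the ``initial'' regular value $\Theta^\diamond$ by reducing to Marden-Rodin Theorem, and then to invoke homotopy invariance of the topological degree along the path $\gamma$ via Lemma~\ref{L-4-3}.

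Since $\Theta^\diamond \in (0,\pi/4)^{|E|}$, any three-fold sum of its values is strictly less than $3\pi/4 < \pi$, and any four-fold sum is strictly less than $\pi < 2\pi$; hence $\Theta^\diamond$ satisfies the hypotheses of Marden-Rodin Theorem~\ref{T-1-1}. Consequently, there exists an \textbf{irreducible} circle pattern on $\hat{\mathbb C}$ with contact graph isomorphic to the $1$-skeleton of $\mathcal T$ and exterior intersection angles given by $\Theta^\diamond$, and this pattern is unique up to linear and anti-linear fractional maps. After sending the infinity point into the interstice of $\triangle^\infty$ and imposing the normalization conditions $\mathrm{\langle \mathbf{y4}\rangle}$-$\mathrm{\langle \mathbf{y6}\rangle}$, the full M\"{o}bius equivalence class descends to exactly one configuration in $Z^\star_{G}$. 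Therefore $\mathcal{R}v^{-1}(\Theta^\diamond)$ consists of a single point.

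Because $\Theta^\diamond$ was chosen via Sard's Theorem to be a regular value of $\mathcal{R}v$, the local degree at this unique preimage is $\pm 1$, so
\[
\deg(\mathcal{R}v,\Omega,\Theta^\diamond) \,=\, 1 \quad \text{or} \quad \deg(\mathcal{R}v,\Omega,\Theta^\diamond) \,=\, -1.
\]
Lemma~\ref{L-4-3} guarantees $\mathcal{R}v^{-1}(\gamma)\subset \Omega$, so the continuous family of target values $\{\Theta(t)\}_{0\leq t\leq 1}$ never attains values on the boundary of $\mathcal{R}v(\Omega\setminus \Omega)$ in the relevant sense. Homotopy invariance of the topological degree (Theorem~\ref{T-6-8}) then yields
\[
\deg(\mathcal{R}v,\Omega,\Theta) \,=\, \deg(\mathcal{R}v,\Omega,\Theta^\diamond),
\]
which completes the argument.

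The substantive content already resides in Lemma~\ref{L-4-3}: the main obstacle is the compactness of $\mathcal{R}v^{-1}(\gamma)$, preventing preimages from escaping to infinity, collapsing to degenerate configurations, or losing the contact pattern during the deformation. A secondary point is verifying that the M\"{o}bius rigidity from Marden-Rodin, combined with the specific normalization in $Z^\star_{G}$, really does yield a unique preimage; this amounts to tracking how the $6$-parameter M\"{o}bius group (together with the orientation constraint from $\operatorname{Arg}(z_c)\in(0,\pi)$) is fully absorbed by the choice of infinity in $\triangle^\infty$ followed by the normalizations $\mathrm{\langle \mathbf{y4}\rangle}$-$\mathrm{\langle \mathbf{y6}\rangle}$, matching the dimension count $\dim(Z^\star_{G})=|E|$.
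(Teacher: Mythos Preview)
Your proposal is correct and follows essentially the same approach as the paper: invoke Theorem~\ref{T-1-1} to conclude that $\mathcal{R}v^{-1}(\Theta^\diamond)$ is a single point, use regularity of $\Theta^\diamond$ to get $\deg(\mathcal{R}v,\Omega,\Theta^\diamond)=\pm 1$, and then apply Lemma~\ref{L-4-3} together with Theorem~\ref{T-6-8} to transfer the degree along $\gamma$ to $\Theta$. Your added justification that $\Theta^\diamond\in(0,\pi/4)^{|E|}$ trivially satisfies the hypotheses of Theorem~\ref{T-1-1}, and your remarks on how the normalization absorbs the M\"obius freedom, are accurate elaborations of steps the paper leaves implicit.
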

\begin{proof}
It suffices to compute $\deg(\mathcal{R}v,\Omega,\Theta^\diamond)$. Theorem~\ref{T-1-1} implies $\mathcal{R}v^{-1}(\Theta^\diamond)$ consists of a unique point. Because $\Theta^\diamond$ is a regular value of the map $\mathcal{R}v$, we get
\[
\deg(\mathcal{R}v,\Omega,\Theta^\diamond)\,=\,1 \quad \text{or} \quad \deg(\mathcal{R}v,\Omega,\Theta^\diamond)\,=\,-1.
\]
Using Lemma~\ref{L-4-3} and Theorem~\ref{T-6-8}, we finish the proof.
\end{proof}

\subsection{Main results}
We are led to the general Marden-Rodin Theorem.
\begin{proof}[\textbf{Proof of Theorem~\ref{T-4-1}}]
Provided $\Theta(e)>0$ for every $e\in E$,  it is easy to derive the result from  Theorem~\ref{T-4-5} and Theorem~\ref{T-6-9}. Suppose $\Theta(e)=0$ for some $e\in E$. Choose $\epsilon>0$ such that $\Theta_\epsilon=\Theta+\epsilon$ satisfies conditions $\mathrm{\mathbf{(c1)}}-\mathrm{\mathbf{(c4)}}$ and $\mathrm{\mathbf{(g5)}}$. Then there exists a normalized \textbf{irreducible} circle pattern $\mathcal P_\epsilon$  realizing the data
$(\mathcal T,\Theta_\epsilon)$. As $\epsilon\to 0$, similar reasoning to Proposition~\ref{P-4-4} implies there is a subsequence of
$\{\mathcal P_\epsilon\}$ convergent to a circle pattern $\mathcal P_0$. In a way similar to the proof of Lemma~\ref{L-4-3}, we check that $\mathcal P_0$ is the required circle pattern realizing the data $(\mathcal T,\Theta)$.
\end{proof}

Applying Sard's Theorem again, we still have the local rigidity property.
\begin{theorem}\label{T-4-6}
For almost every $\Theta\in W_{g}$, up to linear and anti-linear fractional maps, there are at most finitely many \textbf{irreducible} circle patterns on $\hat{\mathbb C}$ realizing the data $(\mathcal T,\Theta)$.
\end{theorem}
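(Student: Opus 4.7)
The plan is to mirror the proof of Theorem~\ref{T-3-11} but working in the Euclidean configuration space $Z^\star_G$ and the map $\mathcal{R}v$ instead of $M^\star_{IG}$ and $\mathcal{E}v$. First I would note the critical dimension count
\[
\dim(Z^\star_G)\,=\,\dim(W_g)\,=\,|E|,
\]
so that Sard's Theorem (Theorem~\ref{T-6-2}) gives a well-defined notion of measure-zero critical values. Let $Z_0\subset Z^\star_G$ denote the set of critical points of $\mathcal{R}v$ and set $W_0=\mathcal{R}v(Z_0)$. By Sard's Theorem $W_0$ has zero Lebesgue measure in $Y=(0,\pi)^{|E|}$, and since $W_g$ is cut out by finitely many strict affine inequalities (together with the single weak inequality permitted in condition $\mathrm{\mathbf{(c2)}}$), the topological boundary $\partial W_g$ also has zero measure. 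Hence $W_g\setminus(W_0\cup\partial W_g)$ is a full-measure subset of $W_g$.

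For any $\Theta$ in this good set, $\Theta$ is a regular value of $\mathcal{R}v$ lying in the interior of $W_g$, so by the Regular Value Theorem (Theorem~\ref{T-6-1}) the fiber $\mathcal{R}v^{-1}(\Theta)$ is a zero-dimensional smooth submanifold of $Z^\star_G$; in particular it is discrete. The content is then to show that this fiber is also compact, and hence finite. This is the step I expect to be the main obstacle, because we need a sequential-compactness argument for a single $\Theta$ (not for a continuous deformation $\gamma$), and we must upgrade it to circle patterns realized on $\hat{\mathbb C}$ up to M\"obius and anti-M\"obius transformations.

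To carry out the compactness, I would take any sequence $\{(\mathbf{z}_n,\mathbf{r}_n)\}\subset\mathcal{R}v^{-1}(\Theta)$ and essentially replay the proof of Lemma~\ref{L-4-3} with the constant curve $\gamma\equiv\Theta$. Concretely: Proposition~\ref{P-4-4} (with $\Theta_n\equiv\Theta$) shows each $r_{i,n}$ is bounded above and below and each $z_{i,n}$ is bounded, so a subsequence converges to some $(\mathbf z_\infty,\mathbf r_\infty)\in Z$; the four-step argument of Lemma~\ref{L-4-3} then places the limit in $Z^\star_G$, using Lemma~\ref{L-2-4} to rule out triangle degeneration, and Lemma~\ref{L-4-2} together with Lemma~\ref{L-2-6} and condition $\mathrm{\mathbf{(c2)}}$ to prevent non-adjacent disks from touching. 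Thus $\mathcal{R}v^{-1}(\Theta)$ is compact and discrete, hence finite.

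Finally, to pass from $Z^\star_G$ (planar patterns normalized by fixing a distinguished triangle $\triangle^\infty$ containing $\infty$) back to the statement about circle patterns on $\hat{\mathbb C}$ modulo linear and anti-linear fractional maps, I would observe that condition $\mathrm{\mathbf{(g5)}}$ together with Lemma~\ref{L-2-7} ensures each irreducible pattern realizing $(\mathcal T,\Theta)$ has at least one interstice, so some triangle of $\mathcal T$ can be sent to $\triangle^\infty$ by a M\"obius transformation; since there are only finitely many triangles in $\mathcal T$ and normalizing the resulting planar pattern consumes the remaining freedom of the M\"obius group, each M\"obius/anti-M\"obius equivalence class is represented by finitely many points of $Z^\star_G$. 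Combining this with the finiteness of $\mathcal{R}v^{-1}(\Theta)$ completes the proof.
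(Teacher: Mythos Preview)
Your outline matches the paper's proof almost exactly: both reduce to the template of Theorem~\ref{T-3-11}, invoking Sard's Theorem for the full-measure set of regular values, the Regular Value Theorem for discreteness of the fiber, and a replay of the compactness argument in Lemma~\ref{L-4-3} (with the constant path $\gamma\equiv\Theta$) for finiteness. The paper, like you, singles out the one extra ingredient needed beyond Theorem~\ref{T-3-11}: showing that every irreducible pattern on $\hat{\mathbb C}$ realizing $(\mathcal T,\Theta)$ actually has an interstice, so that after a M\"obius map it is represented in $Z^\star_G$.

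Your justification of that last step is where you should tighten things. Lemma~\ref{L-2-7} applied to $D_a,D_b,D_c$ only yields $D_a\cap D_b\cap D_c=\emptyset$; it does \emph{not} by itself produce a point of $\hat{\mathbb C}$ missed by \emph{all} disks of $\mathcal P$. The paper handles this by the argument analogous to~\eqref{E-3-7}: one takes the point $q=\partial D_a\cap\partial D_b\cap\triangle_{abc}$ and shows, via an~\eqref{E-3-6}-type claim (non-adjacent disks are disjoint from $D_a,D_b$; adjacent ones are handled through Proposition~\ref{P-3-7}/Lemma~\ref{L-2-7}), that $q$ lies in no $D_i$ with $i\neq a,b,c$, so $U_q=O_q\setminus(D_a\cup D_b)$ is an honest interstice. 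This locates the interstice specifically on the $\triangle^\infty$ side, so the pattern normalizes directly into $Z^\star_G$ and your ``finitely many triangles'' detour becomes unnecessary.
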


\begin{proof}
The proof is the same as that of Theorem~\ref{T-3-11} if we can verify that every \textbf{irreducible} circle pattern $\mathcal P$ realizing the data $(\mathcal T,\Theta)$ has an interstice. In other words, we reduce the proof to showing $\cup_{v_i\in V}D_i\subsetneq\hat{\mathbb C}$. Remember that $v_a,v_b,v_c$ are the vertices opposite to the edges $e_a,e_b,e_c$. Let $\triangle_{abc}$ be the triangle whose vertices are the centers of $D_a,D_b,D_c$. Under condition $\mathrm{\mathbf{(g5)}}$, a similar argument to the proof of~\eqref{E-3-7} indicates there exists a neighborhood $O_q$ of the point $q=\partial D_a\cap \partial D_b\cap \triangle_{abc}$ such that
\[
\big(\cup_{v_i\in V}D_i\big)\cap U_q\,=\,\emptyset,
\]
where $U_q=O_q\setminus(D_a\cup D_b)$ is a non-empty set. It follows that
\[
\cup_{v_i\in V}D_i\,\subset\,\hat{\mathbb C}\setminus U_q\,\subsetneq\,\hat{\mathbb C},
\]
which concludes the theorem.
\end{proof}

Finally let us prove the main results.
\begin{proof}[\textbf{Proof of Theorem~\ref{T-1-4}}]
Combining Theorem~\ref{T-3-1} and Theorem~\ref{T-4-1} gives the assertion.
\end{proof}

\begin{proof}[\textbf{Proof of Theorem~\ref{T-1-6}}]
By Theorem~\ref{T-3-11} and Theorem~\ref{T-4-6}, the statement holds.
\end{proof}

\section{Some questions}\label{S-5}
The paper leaves open a number of questions. We think the following are some particularly interesting subjects for further developments.

The first question concerns a further generalization of Andreev's Theorem. Given an abstract trivalent polyhedron $P$, we call a simple closed curve $\Gamma$ formed by edges of the dual complex $P^\ast$ a \textbf{Whitehead circuit} if $\Gamma$ is the boundary of the union of two adjacent triangles, and we say a \textbf{Whitehead circuit} $\Gamma$ is \textbf{essential} if $\Gamma$ is at the same time the union of two \textbf{homologically non-adjacent} arcs. Specifically, we ask whether the following conjecture holds.
\begin{conjecture}
Let $P$ be an abstract trivalent polyhedron with more than four faces. Assume that $\Theta:E\to (0,\pi)$ is a function satisfying the  conditions below:
\begin{itemize}
\item[$\mathbf{(a1)}$] Whenever three distinct edges $e_1,e_2,e_3$  meet at a vertex, then $\sum_{\mu=1}^3\Theta(e_\mu)>\pi$,  and
    $\Theta(e_1)+\Theta(e_2)<\Theta(e_3)+\pi$, $\Theta(e_2)+\Theta(e_3)<\Theta(e_1)+\pi$, $\Theta(e_3)+\Theta(e_1)<\Theta(e_2)+\pi$.
\item[$\mathbf{(a2)}$] Whenever $\Gamma$ is an \textbf{essential Whitehead circuit} intersecting edges $e_1,e_2,e_3,e_4$, then $\sum_{\mu=1}^4\Theta(e_\mu)\leq 2\pi$, and one of the inequalities is strict if $P$ is the triangular prism.
\item[$\mathbf{(a3)}$] Whenever $\Gamma$ is a \textbf{prismatic k-circuit} intersecting edges $e_1,e_2,\cdots,e_k$, then $\sum_{\mu=1}^k\Theta(e_\mu)<(k-2)\pi$.
\end{itemize}
Then there exists a compact convex hyperbolic polyhedron $Q$ combinatorially equivalent to $P$ with dihedral angles given by $\Theta$. Furthermore, $Q$ is unique up to isometries of $\mathbb H^3$.
\end{conjecture}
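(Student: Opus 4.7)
The approach is to extend the circle-pattern plus topological-degree framework of Sections~\ref{S-3} and~\ref{S-4} to the conjectural dihedral-angle conditions. Set $\mathcal T = P^\ast$; since $P$ is trivalent, $\mathcal T$ is a triangulation of the sphere. Under the duality the conditions transfer as follows: (a1) encodes (c1) together with the ``sum $>\pi$'' assumption (m5); (a3) for $k=3$ and $k=4$ recovers (c3) and (c4), while for $k\geq 5$ it asks for a strict version of the bound that Lemma~\ref{L-2-1} already produces from (c1)--(c4). The genuinely new ingredient is (a2), which relaxes (c2): on a homologically non-adjacent $2$-arc that happens to be half of an essential Whitehead circuit, only the full four-edge sum is controlled, not the individual two-edge sum. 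Once a circle-pattern realization $\mathcal P$ of $(\mathcal T,\Theta)$ is produced, the half-space construction in the proof of Theorem~\ref{T-1-8} yields a hyperbolic polyhedron $Q$ combinatorially equivalent to $P$ and realizing $\Theta$, and Theorem~\ref{T-3-12} (Rivin--Hodgson) delivers rigidity.

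For existence of $\mathcal P$, I would mimic Section~\ref{S-3}: build the configuration space $M^\star_{IG}$ and the evaluation map $\mathcal{E}v$, choose via Sard's Theorem a regular reference value $\widetilde\Theta\in(0,\pi/2)^{|E|}$ satisfying the Koebe--Andreev--Thurston hypotheses so that Lemma~\ref{L-3-8} provides a unique irreducible realization, and connect $\widetilde\Theta$ to $\Theta$ by the straight segment $\gamma$ in the parameter set of functions satisfying the dualized conditions (a1)--(a3). Homotopy invariance of the degree (Theorem~\ref{T-6-8}) then yields $\deg(\mathcal{E}v,\Lambda,\Theta)=\pm 1$, and hence existence, once the analogue of Lemma~\ref{L-3-4} is established: the preimage $\mathcal{E}v^{-1}(\gamma)$ admits a relatively compact open neighborhood in $M^\star_{IG}$.

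The compactness analysis is the main obstacle. Proposition~\ref{P-3-5} ruling out the degeneration of disks to points or hemispheres should go through after first extending Lemma~\ref{L-2-1} to the present hypotheses; this extension is robust because the induction on the number of edges splits into the same cases, with (a2) replacing (c2) in the essential-Whitehead case and (a3) for longer prismatic circuits replacing the recursive use of (c3), (c4). Step~2 (no triangle degeneration, via Lemmas~\ref{L-2-2} and~\ref{L-2-3}) is purely local and unchanged, and Step~4 (irreducibility) uses only the prismatic-$3$-circuit condition via Proposition~\ref{P-3-7}. The critical obstruction is Step~3, which previously ruled out a limit tangency of two non-adjacent disks $D_\alpha$ and $D_\beta$ at a point $p\in\mathbb D_\eta$: the old argument invoked (c2) through Lemma~\ref{L-2-6} to contradict $\Theta([v_\alpha,v_\eta])+\Theta([v_\eta,v_\beta])>\pi$, but (a2) alone does not supply that local bound.

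My plan for this step is to exhibit a fourth vertex $v_\zeta$, adjacent to each of $v_\alpha,v_\beta,v_\eta$, such that $\{v_\alpha,v_\eta,v_\zeta\}$ and $\{v_\eta,v_\beta,v_\zeta\}$ span triangles of $\mathcal T$; then the $4$-cycle through $v_\alpha, v_\eta, v_\beta, v_\zeta$ is a Whitehead circuit, essential because $v_\alpha$ and $v_\beta$ are non-adjacent, and (a2) supplies a $\le 2\pi$ bound along its four edges. Running Lemma~\ref{L-2-6} symmetrically at $v_\zeta$, arranging via a further flower analysis in the spirit of Lemma~\ref{L-3-2} that the limit tangency point $p$ also lies in $\mathbb D_\zeta$, should yield $\Theta([v_\alpha,v_\zeta])+\Theta([v_\zeta,v_\beta])>\pi$ and combine with the original inequality to contradict (a2); the triangular-prism case is exactly where the strict clause of (a2) must be invoked, just as the strict clause of (c2) handles the triangular bipyramid in Theorem~\ref{T-1-4}. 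The hardest subcase is when the combinatorics of $\mathcal T$ do not force such a $v_\zeta$ to exist. Here I would propagate the limit tangency around the link of $v_\eta$, producing a chain of limiting tangencies among the disks corresponding to successive neighbours of $v_\eta$ until reaching a longer prismatic $k$-circuit on which (a3) is violated. Making this flower-propagation rigorous, together with the accompanying limiting apex-curvature computation in the presence of tangent disks of non-zero radius, is what I expect to be the genuine technical heart of a proof of the conjecture.
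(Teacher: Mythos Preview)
The statement you are attempting to prove is not a theorem of the paper: it is Conjecture~5.1, explicitly posed in Section~\ref{S-5} as an open problem. The paper offers no proof, only a list of heuristic reasons for believing the conjecture (the rigidity part follows from Rivin--Hodgson; condition $\mathbf{(a1)}$ controls face-angle degeneration; condition $\mathbf{(a2)}$ should prevent Whitehead moves; an analogue of Lemma~\ref{L-2-1} should hold). There is therefore no ``paper's own proof'' to compare against.

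Your outline is in fact a faithful elaboration of the paper's own heuristics, and you correctly locate the genuine obstruction: Step~3 of the proof of Lemma~\ref{L-3-4}, where a limit tangency $D_{\alpha,\infty}\cap D_{\beta,\infty}=\{p\}\subset\mathbb D_{\eta,\infty}$ between non-adjacent disks was ruled out via condition~$\mathrm{\mathbf{(c2)}}$ and Lemma~\ref{L-2-6}. Under $\mathbf{(a2)}$ alone the two-edge bound $\Theta([v_\alpha,v_\eta])+\Theta([v_\eta,v_\beta])\leq\pi$ is unavailable, and your proposed remedy---locating a fourth vertex $v_\zeta$ so that $v_\alpha,v_\eta,v_\beta,v_\zeta$ bound an essential Whitehead circuit and then arguing that $p\in\mathbb D_{\zeta,\infty}$ as well---is exactly the kind of argument that would be needed. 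However, as you yourself note, such a $v_\zeta$ need not exist combinatorially, and the fallback ``flower-propagation'' you sketch (chaining tangencies around the link of $v_\eta$ until a prismatic circuit appears) is not yet an argument: one must control why successive limit tangencies are forced, and why the resulting circuit is prismatic rather than bounding a union of triangles. This is precisely the gap that keeps the statement a conjecture, and your proposal does not close it.
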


We believe it a hopeful conjecture in light of the following reasons:
\begin{itemize}
\item{} Theorem~\ref{T-1-8} is a special case of the above conjecture.
\item{} The rigidity part has been proved by Rivin-Hodgson~\cite[Corollary 4.6]{Rivin-Hodgson}.
\item{} A key step of the proof is to show under suitable conditions a sequence of compact convex hyperbolic polyhedra $\{Q_n\}$ can produce a limit polyhedron $Q_\infty$. Condition $\mathbf{(a1)}$ implies the face angles of $Q_\infty$ do not degenerate. Meanwhile, condition $\mathbf{(a2)}$ prevents Whitehead moves. That means the combinatorics of $Q_\infty$ may not change.
\item{} Under conditions $\mathbf{(a1)}-\mathbf{(a3)}$, an analogous result to Lemma~\ref{L-2-1} holds. Following the proof of Proposition~\ref{P-3-5}, we can establish some non-degeneracy properties of the corresponding limit circle pattern.
\end{itemize}

We may also pursue the question of generalizing Theorem~\ref{T-1-4}. Nonetheless, it is much more difficult to control the combinatorial structure of the contact graph once the conditions of Theorem~\ref{T-1-4} are further relaxed. In fact, using Rivin's Theorem~\cite{Rivin}, we find that convex hyperbolic polyhedral in a fixed combinatorial class may provide circle patterns with non-isomorphic contact graphs.

Another question is to develop algorithms to search the desired circle patterns. In non-obtuse angle cases, there are already many successful computational methods in the works of Collins-Stephenson~\cite{Collins-Stephenson}, Chow-Luo~\cite{Chow-Luo}, Orick-Stephenson-Collins~\cite{Orick-Stephenson-Collins}, Connelly-Gortler~\cite{Connelly-Gortler}, Bowers~\cite{Bowers} and others.
We are looking forward to more numerical experiments to check whether these algorithms still work well under conditions of Theorem~\ref{T-1-4}. Moreover, can one prove the convergence?

Finally we are interested in the question whether it is possible to establish the global rigidity property under conditions of Theorem~\ref{T-1-4}. It is known that the answer is positive under extra condition $\mathbf{(m5)}$ or $\mathbf{(r1)}$ (see Theorem~\ref{T-1-8} and Remark~\ref{R-1-7}), while the general case remains open.

   %space $M^\star_{IG}$, define the following potential
%    \[
%\Phi(\mathbf{z},\mathbf{r})\,:=\,\sum\limits_{e\in E} \big(\cos %\Theta(e,\mathbf{z},\mathbf{r})-\cos \Theta(e)\big)^2.
%    \]
%We are interested in the following questions: Does the gradient descent of $\Phi$ provide an efficient way to search the desired circle pattern?
%\begin{itemize}
%\item{}Except for Theorem~\ref{T-1-8},  Bao-Bonahon's Theorem~\cite{Bao-Bonahon} can be also regarded as the special case of this conjecture. Note that the limit case of Bao-Bonahon's Theorem~\cite{Bao-Bonahon} leads to Rivin's Theorem~\cite{Rivin2}. That means our conjecture may provide a unified version of these three results.
%\item{} By spherical geometry,
%\item{} Following the proof of Lemma~\ref{L-2-1}
%\end{itemize}
%\end{itemize}

\section{Appendix}\label{S-6}
This section is devoted to some results from differential topology. Remind that similar contents to this appendix also appeared in~\cite{Liu-Zhou,Zhou,Jiang-Luo-Zhou}. Here we include them for the sake of completeness and we refer the readers to~\cite{Guillemin-Pollack, Hirsch,Milnor} for a fuller treatment.

Assume that $M, N$ are smooth manifolds of dimensions $m,n$. A point $x\in M$ is called a critical point of a $C^1$ map $f: M\to N$ if the tangent map  $df_x: T_x M\to T_{f(x)}N$ is not surjective. We use $C_f$  to denote the set of critical points of $f$ and define $N\setminus f(C_f)$ to be the set of regular values of $f$.

\begin{theorem}[Regular Value Theorem]\label{T-6-1}
Let $f: M\to N$ be a $C^r$ ($r\geq 1$) map and let $y\in N$ be a regular value of $f$. Then $f^{-1}(y)$ is a closed $C^r$ submanifold of $M$.
If $y\in im(f)$, then
\[
\dim\big(f^{-1}(y)\big)\,=\,m-n.
\]
\end{theorem}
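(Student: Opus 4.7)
The plan is to combine a soft topological observation with the Implicit Function Theorem applied in local charts. First I would handle closedness trivially: since $f$ is $C^r$ and in particular continuous, and since $\{y\}$ is closed in $N$, the preimage $f^{-1}(y)$ is a closed subset of $M$. This part requires nothing beyond continuity.

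Next I would establish the submanifold structure locally around each $x_0 \in f^{-1}(y)$. Since $y$ is a regular value and $x_0 \in f^{-1}(y)$, the differential $df_{x_0}\colon T_{x_0}M\to T_yN$ is surjective; in particular $m \geq n$ (if $m < n$ the preimage must be empty, and the claim is vacuous). Choose charts $(U,\varphi)$ around $x_0$ in $M$ and $(V,\psi)$ around $y$ in $N$ with $\varphi(x_0)=0$ and $\psi(y)=0$, so that $f$ is represented by a $C^r$ map $F=\psi\circ f\circ\varphi^{-1}\colon \varphi(U)\subset \mathbb{R}^m\to\mathbb{R}^n$ with $F(0)=0$ and $DF(0)$ of rank $n$.

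After a linear change of coordinates in $\mathbb{R}^m$ I may assume the last $n$ columns of $DF(0)$ form an invertible $n\times n$ matrix. Splitting the variable as $(u,v)\in\mathbb{R}^{m-n}\times\mathbb{R}^n$, the Implicit Function Theorem produces an open neighborhood $U_1\times U_2$ of the origin and a $C^r$ function $g\colon U_1\to U_2$ with $g(0)=0$ such that, for $(u,v)\in U_1\times U_2$,
\[
F(u,v)=0 \quad\Longleftrightarrow\quad v=g(u).
\]
Pulling back through $\varphi$, the set $f^{-1}(y)\cap\varphi^{-1}(U_1\times U_2)$ is the graph of $g$, hence parametrized $C^r$-diffeomorphically by $U_1\subset\mathbb{R}^{m-n}$. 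Varying $x_0$, we obtain an atlas of $C^r$ charts on $f^{-1}(y)$ whose transition maps are $C^r$ because they are restrictions of the ambient transition maps on $M$. This equips $f^{-1}(y)$ with the structure of a $C^r$ submanifold of $M$ of dimension $m-n$ whenever $y\in\mathrm{im}(f)$.

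There is no serious obstacle; the proof is essentially a careful bookkeeping exercise around the Implicit Function Theorem. The one point that deserves a line of care is verifying that the local graph charts are compatible on overlaps, but this is automatic since each chart is the composition of an ambient chart of $M$ with a projection, and the ambient transition maps are already $C^r$. Hence the conclusion follows.
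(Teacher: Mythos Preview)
Your argument is correct and is the standard proof of the Regular Value Theorem via the Implicit Function Theorem in local charts. Note, however, that the paper does not actually prove this statement: Theorem~\ref{T-6-1} is stated in the appendix without proof as a standard result from differential topology, with the reader referred to \cite{Guillemin-Pollack, Hirsch, Milnor}. Your write-up is precisely the argument one finds in those references, so there is nothing to compare against in the paper itself.
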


\begin{theorem}[Sard's Theorem]\label{T-6-2}
Let $f:M\to N$ be a $C^r$ map with
\[
r\,\geq\,\max\{1,m-n+1\}.
\]
Then $f(C_f)$ has zero measure in $N$.
\end{theorem}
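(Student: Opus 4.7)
The plan is to follow the classical proof by reducing to the Euclidean local setting and inducting on the source dimension $m$. First, exploiting second countability, cover $M$ and $N$ by countably many coordinate charts; since a countable union of measure-zero sets has measure zero and the property of having measure zero in $N$ can be tested chart by chart, the problem reduces to: for every open $U \subseteq \mathbb{R}^m$ and every $C^r$ map $f : U \to \mathbb{R}^n$ with $r \ge \max(1, m - n + 1)$, the critical image $f(C_f)$ has Lebesgue measure zero in $\mathbb{R}^n$. The base case $m = 0$ is trivial, and the case $m < n$ can be disposed of directly since for any compact $K \subset U$ the $C^1$ image $f(K)$ lives in a lower-dimensional set and thus has $n$-dimensional measure zero.

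For the inductive step with $m \ge n$, stratify the critical set
\[
C_f \,\supseteq\, \Sigma_1 \,\supseteq\, \Sigma_2 \,\supseteq\, \cdots,
\]
where $\Sigma_k = \{x \in U : D^j f(x) = 0 \text{ for all } 1 \le j \le k\}$, and treat $f(C_f \setminus \Sigma_1)$, each $f(\Sigma_k \setminus \Sigma_{k+1})$ for $k \ge 1$, and $f(\Sigma_s)$ for a sufficiently large $s$ separately. For $C_f \setminus \Sigma_1$, at such a point some first partial derivative of some component (say $\partial f_1/\partial x_1$) is nonzero; then $(x_1,\dots,x_m) \mapsto (f_1(x), x_2, \dots, x_m)$ is a local diffeomorphism, and after precomposing by its inverse, $f$ takes the form $(t, y) \mapsto (t, g_t(y))$ for a smooth family $g_t : \mathbb{R}^{m-1} \to \mathbb{R}^{n-1}$; the inductive hypothesis applied slicewise together with Fubini's theorem yields the claim. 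For $\Sigma_k \setminus \Sigma_{k+1}$ with $k \ge 1$, choose a nonvanishing $(k{+}1)$-st partial derivative; then a suitable $k$-th partial derivative vanishes with nonzero gradient locally, so its zero locus is a smooth hypersurface containing the relevant part of $\Sigma_k$, and restricting $f$ to it invokes the inductive hypothesis in source dimension $m - 1$.

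The main obstacle, and the step that forces the precise smoothness threshold $r \ge m - n + 1$, is bounding $f(\Sigma_s)$ via a direct covering estimate. Taylor's theorem yields $|f(x) - f(x_0)| \le C \|x - x_0\|^{s+1}$ whenever $x_0 \in \Sigma_s$, $x$ is nearby, and $f$ is of class $C^{s+1}$, since all partial derivatives up to order $s$ vanish at $x_0$. Covering a bounded portion of $\Sigma_s$ by $O(\delta^{-m})$ cubes of side length $\delta$, each image lies in a Euclidean ball of $n$-volume $O(\delta^{n(s+1)})$, giving a total measure bound of $O(\delta^{n(s+1) - m})$ that tends to zero as $\delta \to 0$ provided $n(s+1) > m$. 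The delicate point is to choose $s$ minimally with this property and to verify that the smoothness available, $s + 1 \le r$, is consistent with the hypothesis $r \ge m - n + 1$; this is precisely the balancing that determines when the argument closes. I expect the bookkeeping around this covering estimate, especially reconciling the cases $n = 1$ and $m = n$ where the exponent count is tight and a slightly different $C^1$ argument (exploiting uniform continuity of $Df$ on compacts) may be needed, to be where the proof requires the most care.
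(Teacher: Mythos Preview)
Your proof sketch follows the classical Milnor--Hirsch argument and is essentially correct as a plan for proving Sard's Theorem. However, the paper itself does not prove this statement: Theorem~\ref{T-6-2} is recorded in the appendix purely as a background result from differential topology, with no proof given and with the reader referred to the textbooks of Guillemin--Pollack, Hirsch, and Milnor. So there is nothing in the paper to compare your argument against; you have supplied a proof where the author deliberately supplies none.

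That said, a brief remark on your sketch itself. The overall architecture---local reduction, induction on $m$, the stratification $C_f \supseteq \Sigma_1 \supseteq \Sigma_2 \supseteq \cdots$, the coordinate-change trick for $C_f \setminus \Sigma_1$ and $\Sigma_k \setminus \Sigma_{k+1}$, and the Taylor covering estimate for $\Sigma_s$---is exactly the standard one. You correctly identify the place where the hypothesis $r \ge m-n+1$ enters: you need $s+1 \le r$ (so that the Taylor bound of order $s+1$ is available) simultaneously with $n(s+1) > m$ (so that the covering estimate goes to zero). One small point to watch: when $m=n$ the inequality $n(s+1) > m$ forces $s \ge 1$, hence $r \ge 2$, which is stronger than the stated hypothesis $r \ge 1$. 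As you anticipate, this borderline case is handled not by the crude Taylor bound but by the refinement using uniform continuity of $Df$ on compacta, which upgrades the estimate on cubes meeting $\Sigma_1$ from $O(\delta^n)$ to $o(\delta^n)$. If you write this up in full you should make that step explicit rather than leave it as a parenthetical.
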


We now suppose $M,N$ are oriented manifolds of equal dimensions. Let
$\Lambda$ be a relatively compact open subset of $M$. That means $\Lambda\subset M$ is open and has compact closure in $M$. For simplicity,  we use the notation $f\mbox{\large $\pitchfork$}_{\Lambda}y$ to indicate that $y$ is a regular value of the restriction map $f: \Lambda \to N$.

We first define the topological degree for a special class of smooth maps. In fact, we assume $f\in C^{\infty}(\Lambda,N)\cap C^{0}(\bar{\Lambda},N)$, $f\mbox{\large $\pitchfork$}_{\Lambda}y$ and $f^{-1}(y)\cap\partial \Lambda=\emptyset$ and write the topological degree of $y$ and $f$ in $\Lambda$ as $\deg(f,\Lambda,y)$. In case that $ f^{-1}(y)\cap\Lambda=\emptyset$, we define $\deg(f,\Lambda,y)=0$. In case that $f^{-1}(y)\cap\Lambda\neq\emptyset$, the Regular Value Theorem implies it consists of finite points. For each point $x\in f^{-1}(y)\cap\Lambda$, the sign $\operatorname{sgn}(f,x)=+1$, if the tangent map $df_{x}:T_{x} M \to T_{y}N$  preserves orientation; Otherwise, $\operatorname{sgn}(f,x)=-1$.

\begin{definition}
Suppose $f^{-1}(y)\cap\Lambda=\{x_1,\cdots,x_k\}$. The degree $\deg(f,\Lambda,y)$ is defined as
\[
\deg(f,\Lambda,y)\,=\,\sum\nolimits_{i=1}^k \operatorname{sgn}(f,x_i).
\]
\end{definition}

In what follows let $I$ represent the interval $[0,1]$.
\begin{proposition}\label{P-6-4}
Let  $f_{i}\in
C^{\infty}(\Lambda,N)\cap C^{0}(\bar{\Lambda},N)$ satisfy $f_{i}\mbox{\large $\pitchfork$}_{\Lambda}y
$ and $f_{i}^{-1}(y)\cap\partial \Lambda=\emptyset$ for $i=0,1$. Suppose there
exists  a homotopy
\[
H \,\in\, C^{0}({I\times \bar\Lambda},N)
\]
such that
\begin{itemize}
\item[$(i)$]$H(0,\cdot)=f_0(\cdot)$, \, $H(1,\cdot)=f_1(\cdot)$,
\item[$(ii)$] $H(I\times\partial \Lambda)\subset N\setminus\{y\}$.
 \end{itemize}
 Then
\[
\deg(f_{0},\Lambda,y)\,=\, \deg(f_{1},\Lambda,y).
\]
\end{proposition}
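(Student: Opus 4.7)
The plan is to reduce the statement to the smooth-homotopy case and then apply the classical compact $1$-manifold boundary argument. First I would approximate the continuous homotopy $H$ by a smooth map $\tilde H \in C^\infty(I \times \bar\Lambda, N)$ agreeing with $f_0$ on $\{0\}\times\bar\Lambda$ and with $f_1$ on $\{1\}\times\bar\Lambda$. This can be arranged via a standard collaring of $\partial I \times \bar\Lambda$ together with convolution-type smoothing in the interior of $I\times\Lambda$. By the compactness of $I\times\partial\Lambda$ and the openness of $N\setminus\{y\}$, taking the $C^0$ approximation sufficiently uniform guarantees that $\tilde H(I\times\partial\Lambda) \subset N\setminus\{y\}$ as well.

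Next I would invoke Sard's Theorem (Theorem~\ref{T-6-2}) applied to $\tilde H$ to produce a regular value $y' \in N$ of $\tilde H$ arbitrarily close to $y$. Since transversality to a point is open in the target, whenever $y'$ is close enough to $y$ it is automatically also a regular value of $f_0$ and of $f_1$ (both being transverse to $y$ by hypothesis), and moreover $\tilde H^{-1}(y') \cap (I\times\partial\Lambda) = \emptyset$. By the Regular Value Theorem (Theorem~\ref{T-6-1}), $\tilde H^{-1}(y')$ is then a compact $1$-dimensional smooth submanifold-with-boundary of $I\times\bar\Lambda$, with $\partial \tilde H^{-1}(y') \subset (\{0\}\cup\{1\})\times\Lambda$.

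The core of the argument is the following sign bookkeeping. By the classification of compact $1$-manifolds with boundary, $\tilde H^{-1}(y')$ is a disjoint union of finitely many arcs and circles; only the arcs contribute to the boundary. Orient each arc using the orientations of $I\times M$ and $N$, and compare with the orientations of the two slices $\{0\}\times\Lambda$ and $\{1\}\times\Lambda$ (the former receiving the opposite induced boundary orientation). A routine sign check shows that for an arc whose two endpoints lie in the same slice $\{i\}\times\Lambda$ the contributions to $\deg(f_i,\Lambda,y')$ cancel, while for an arc joining the two slices the two endpoints contribute equal signs to $\deg(f_0,\Lambda,y')$ and $\deg(f_1,\Lambda,y')$ respectively. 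Summing over all arcs yields $\deg(f_0,\Lambda,y') = \deg(f_1,\Lambda,y')$. Finally, the local constancy of the degree at regular values, immediate from its definition as a signed point count, gives $\deg(f_i,\Lambda,y') = \deg(f_i,\Lambda,y)$ for $y'$ sufficiently close to $y$, completing the proof.

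The step I expect to be the main technical obstacle is the simultaneous smoothing: producing a smooth $\tilde H$ that matches the prescribed smooth maps $f_0,f_1$ on the two end-slices while remaining uniformly close to the original $H$ in a neighbourhood of $I\times\partial\Lambda$ so that $y$ stays outside its image there. This is handled by a standard collar construction on $\partial(I\times\bar\Lambda)$ combined with a partition-of-unity smoothing in local charts of $N$, with the compactness of $\bar\Lambda$ providing uniform control; the sign analysis in step three, although the conceptual heart of the theorem, is then a purely local orientation calculation along each arc.
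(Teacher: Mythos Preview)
The paper does not actually prove Proposition~\ref{P-6-4}: it is stated without proof in the appendix as one of several standard facts from differential topology, with the reader referred to \cite{Guillemin-Pollack,Hirsch,Milnor}. Your proposal is precisely the classical argument found in those references (smooth the homotopy relative to the end-slices, perturb to a nearby regular value via Sard, and read off the equality of degrees from the boundary of the resulting compact $1$-manifold), so there is nothing to compare against and your outline is appropriate.

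One small remark worth making explicit in your write-up: $\Lambda$ is only assumed to be a relatively compact open subset of $M$, so $\bar\Lambda$ need not be a manifold with boundary. The argument still goes through because $\tilde H^{-1}(y')$, being disjoint from $I\times\partial\Lambda$, lies entirely in $I\times\Lambda$, which \emph{is} a manifold with boundary $\{0,1\}\times\Lambda$; compactness of $\tilde H^{-1}(y')$ then follows since it is closed in the compact set $I\times\bar\Lambda$. You implicitly use this, but spelling it out avoids any ambiguity about where the Regular Value Theorem is being applied.
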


The lemma below is a consequence of Sard's Theorem.
\begin{lemma}\label{L-6-5}
For any $f\in C^{0}(\bar{\Lambda},N)$ and $y\in N$, if $f^{-1}(y)\cap\partial \Lambda=\emptyset$, then there exist
\[
g\,\in\, C^{\infty}(\Lambda,N)\cap C^{0}(\bar{\Lambda},N)
\quad \text{and}\quad H\,\in\, C^{0}(I\times\bar{\Lambda},N)
\]
satisfying the following properties:
\begin{itemize}
\item[$(i)$] $g\mbox{\large $\pitchfork$}_{\Lambda} y$;
\item[$(ii)$] $H(0,\cdot)=f(\cdot)$,\, $H(1,\cdot)=g(\cdot)$;
\item[$(iii)$] $H(I\times\partial \Lambda)\subset N\setminus\{y\}$.
\end{itemize}
\end{lemma}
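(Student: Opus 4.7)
The plan is to combine smooth approximation (to obtain smoothness of $g$) with Sard's theorem (to make $y$ a regular value), while keeping all intermediate maps close enough to $f$ near $\partial\Lambda$ to preserve the boundary condition. First I would use compactness: since $\bar\Lambda$ is compact, $f(\partial\Lambda)$ is a compact subset of $N\setminus\{y\}$, so after fixing a Riemannian metric $\rho$ on $N$ there exists $\delta>0$ with $\rho(f(x),y)\geq 3\delta$ for every $x\in\partial\Lambda$.

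Next, I would apply Whitney's smooth approximation theorem for manifold-valued maps (for instance by embedding $N$ into some $\mathbb R^k$, convolving via a partition of unity, and projecting back through a tubular neighborhood retraction). This produces $\tilde g\in C^{\infty}(\Lambda,N)\cap C^{0}(\bar\Lambda,N)$ with $\rho(f(x),\tilde g(x))<\delta$ for all $x\in\bar\Lambda$, together with a canonical homotopy $H_{1}\in C^{0}(I\times\bar\Lambda,N)$ from $f$ to $\tilde g$ (the geodesic homotopy, well-defined as long as $\tilde g$ is close enough to $f$). Then I would apply Sard's theorem (Theorem~\ref{T-6-2}) to the smooth map $\tilde g$: the regular values are dense, so I can pick a regular value $y'\in N$ of $\tilde g$ with $\rho(y',y)<\delta/2$. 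Inside a small coordinate ball around $y$, I would construct a diffeomorphism $\phi:N\to N$, supported in that ball and isotopic to the identity via some isotopy $\phi_s$, with $\phi(y')=y$; such $\phi$ is built from a compactly supported bump vector field. Setting $g:=\phi\circ\tilde g$, the chain rule gives that $y$ is a regular value of $g$, and $H_{2}(s,x):=\phi_{s}(\tilde g(x))$ is a homotopy from $\tilde g$ to $g$.

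Concatenating (and reparametrizing) $H_{1}$ and $H_{2}$ then yields the desired $H\in C^{0}(I\times\bar\Lambda,N)$ joining $f$ to $g$. On $\partial\Lambda$, the first half stays within $\delta$ of $f$ by the approximation estimate, hence at distance $\geq 2\delta$ from $y$; the second half stays at distance $\geq\delta$ from $y$ because $\phi_{s}$ is supported in a $\delta$-ball about $y$ while $\tilde g(\partial\Lambda)$ lies outside that ball. Thus $H(I\times\partial\Lambda)\subset N\setminus\{y\}$, completing the verification. The only real technical obstacle is producing the smooth approximation $\tilde g$ of $f$ together with a controlled homotopy $H_{1}$; once that standard manifold-valued Whitney approximation is in hand, the rest is essentially a careful application of Sard's theorem together with an isotopy trick to relocate a nearby regular value onto $y$.
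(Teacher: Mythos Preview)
Your argument is correct and supplies exactly the details the paper omits: the paper states the lemma only as ``a consequence of Sard's Theorem'' with no further proof, and your combination of Whitney smooth approximation, Sard's theorem to locate a nearby regular value, and an ambient isotopy supported near $y$ to move that regular value onto $y$ is the standard way to cash this out. The distance bookkeeping (the $3\delta/\delta$ margins) cleanly ensures the boundary condition $(iii)$ throughout both halves of the homotopy.
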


We are ready to define the topological degree for general continuous maps.
\begin{definition}
Let $f\in C^{0}(\bar{\Lambda},N)$ and $y\in N$ satisfy $f^{-1}(y)\cap\partial \Lambda=\emptyset$. We define
\[
\deg(f,\Lambda,y)\,=\, \deg(g,\Lambda,y),
\]
where $g$ is given by Lemma~\ref{L-6-5}.
\end{definition}

Because of Proposition~\ref{P-6-4}, $\deg(g,\Lambda,y)$ is well-defined and does not depend on the particular choice of $g$.

\begin{theorem}
Let $f_{i}\in C^{0}(\bar{\Lambda},N)$ satisfy
$f_{i}^{-1}(y)\cap\partial \Lambda=\emptyset$ for $i=0,1$. If there exists a homotopy
\[
H\,\in\, C^{0}(I\times\bar{\Lambda},N)
\]
such that
\begin{itemize}
\item[$(i)$] $H(0,\cdot)=f_{0}(\cdot)$,\, $H(1,\cdot)=f_{1}(\cdot)$,
\item[$(ii)$] $H(I\times\partial \Lambda)\subset N\setminus \{y\}$,
\end{itemize}
then
\[
\deg(f_{0},\Lambda,y)\,=\, \deg(f_{1},\Lambda,y).
\]
\end{theorem}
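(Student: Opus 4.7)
The plan is to reduce the continuous case to the smooth, transverse case already handled by Proposition~\ref{P-6-4}, using Lemma~\ref{L-6-5} as the smoothing device and the definition of $\deg$ for continuous maps.

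First I would use Lemma~\ref{L-6-5} on each endpoint. Since $f_i^{-1}(y)\cap\partial\Lambda=\emptyset$ for $i=0,1$, the lemma produces smooth maps $g_i\in C^{\infty}(\Lambda,N)\cap C^0(\bar\Lambda,N)$ with $g_i\mbox{\large $\pitchfork$}_\Lambda y$, together with continuous homotopies $H_i\in C^0(I\times\bar\Lambda,N)$ satisfying $H_i(0,\cdot)=f_i(\cdot)$, $H_i(1,\cdot)=g_i(\cdot)$, and $H_i(I\times\partial\Lambda)\subset N\setminus\{y\}$. By the very definition of the topological degree for continuous maps,
\[
\deg(f_i,\Lambda,y)\,=\,\deg(g_i,\Lambda,y),\qquad i=0,1.
\]
It therefore suffices to prove $\deg(g_0,\Lambda,y)=\deg(g_1,\Lambda,y)$.

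Next I would concatenate three homotopies to connect $g_0$ to $g_1$. Define $\widetilde H\in C^0(I\times\bar\Lambda,N)$ by
\[
\widetilde H(s,x)\,=\,\begin{cases} H_0\!\left(1-3s,x\right), & s\in[0,1/3],\\[2pt] H\!\left(3s-1,x\right), & s\in[1/3,2/3],\\[2pt] H_1\!\left(3s-2,x\right), & s\in[2/3,1].\end{cases}
\]
This is well-defined and continuous because the pieces agree at $s=1/3$ (both equal $f_0$) and at $s=2/3$ (both equal $f_1$), giving $\widetilde H(0,\cdot)=g_0$ and $\widetilde H(1,\cdot)=g_1$. Since each of $H_0$, $H$, $H_1$ avoids $y$ on $\partial\Lambda$, we have $\widetilde H(I\times\partial\Lambda)\subset N\setminus\{y\}$.

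Finally I would invoke Proposition~\ref{P-6-4} directly with $f_0\leftsquigarrow g_0$, $f_1\leftsquigarrow g_1$, and homotopy $\widetilde H$: its hypotheses are exactly that the endpoints are smooth, transverse to $y$, and joined by a continuous homotopy avoiding $y$ on the boundary. This yields $\deg(g_0,\Lambda,y)=\deg(g_1,\Lambda,y)$, and combined with the identifications above,
\[
\deg(f_0,\Lambda,y)\,=\,\deg(g_0,\Lambda,y)\,=\,\deg(g_1,\Lambda,y)\,=\,\deg(f_1,\Lambda,y).
\]
There is essentially no obstacle here: all the work has been absorbed into Lemma~\ref{L-6-5} (which in turn relies on Sard's Theorem to produce a transverse smooth approximation) and Proposition~\ref{P-6-4} (which treats the smooth, transverse case via a cobordism/winding-number argument). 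The only point that requires mild attention is the continuity of the concatenation $\widetilde H$ at the gluing parameters $s=1/3, 2/3$, and the verification that the boundary-avoidance property is preserved under concatenation — both of which are immediate from the construction.
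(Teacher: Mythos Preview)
Your argument is correct and is the standard reduction: approximate each $f_i$ by a smooth transverse $g_i$ via Lemma~\ref{L-6-5}, concatenate the three homotopies, and apply Proposition~\ref{P-6-4}. One small point you left implicit but which does hold: the hypothesis $g_i^{-1}(y)\cap\partial\Lambda=\emptyset$ needed in Proposition~\ref{P-6-4} follows from $H_i(I\times\partial\Lambda)\subset N\setminus\{y\}$ evaluated at $t=1$.

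As for comparison with the paper: the paper does not actually supply a proof of this theorem. It is listed in the appendix as a standard fact from differential topology, with the reader referred to \cite{Guillemin-Pollack,Hirsch,Milnor} for details. Your proof is exactly the argument one would expect given the surrounding scaffolding (Proposition~\ref{P-6-4}, Lemma~\ref{L-6-5}, and the definition of degree for continuous maps), so there is nothing to contrast.
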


\begin{theorem}\label{T-6-8}
Let $\gamma:I\to N$ be a continuous curve satisfying $f^{-1}(\gamma)\cap\partial \Lambda=\emptyset$. Then
\[
\deg\big(f,\Lambda,\gamma(1)\big)\, =\, \deg\big(f,\Lambda,\gamma(0)\big).
\]
\end{theorem}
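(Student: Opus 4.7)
The plan is to prove that the function $t\mapsto \deg(f,\Lambda,\gamma(t))$ is locally constant on the connected interval $I=[0,1]$, from which constancy (and hence the desired equality) follows immediately. Note first that for every $t\in I$ one has $\gamma(t)\notin f(\partial\Lambda)$ by hypothesis, so each degree $\deg(f,\Lambda,\gamma(t))$ is actually defined. It therefore suffices to show: for any $y_0\in N\setminus f(\partial\Lambda)$, the function $y\mapsto \deg(f,\Lambda,y)$ is constant in an open neighborhood $V$ of $y_0$; continuity of $\gamma$ then transports this local constancy to $I$.

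To prove local constancy at $y_0$, I would first reduce to a smooth model. By Lemma~\ref{L-6-5} there exist $g\in C^{\infty}(\Lambda,N)\cap C^{0}(\bar\Lambda,N)$ with $g\pitchfork_{\Lambda}y_0$ and a homotopy $H\in C^{0}(I\times\bar\Lambda,N)$ from $f$ to $g$ with $H(I\times\partial\Lambda)\subset N\setminus\{y_0\}$. Since $H(I\times\partial\Lambda)$ is compact and avoids $y_0$, I can pick an open neighborhood $V_1$ of $y_0$ disjoint from it; for every $y\in V_1$, Theorem~\ref{T-6-7} then gives $\deg(f,\Lambda,y)=\deg(g,\Lambda,y)$. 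This reduces the problem to showing that $y\mapsto \deg(g,\Lambda,y)$ is constant in a neighborhood of $y_0$.

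For this final step I would use the Inverse Function Theorem. Because $g\pitchfork_{\Lambda}y_0$, the preimage $g^{-1}(y_0)\cap\Lambda=\{x_1,\dots,x_k\}$ is finite (using the Regular Value Theorem plus the fact that $\bar\Lambda$ is compact, so $g^{-1}(y_0)\cap\bar\Lambda$ is compact and discrete). At each $x_i$ the differential $dg_{x_i}$ is an isomorphism, so there exist pairwise disjoint open neighborhoods $U_i\ni x_i$ on which $g$ is an orientation-preserving (or reversing) diffeomorphism onto a common open neighborhood $W$ of $y_0$. Shrinking $W$ so that $W\subset V_1$ and $W\cap g\!\left(\bar\Lambda\setminus\bigcup U_i\right)=\emptyset$ (possible since $\bar\Lambda\setminus\bigcup U_i$ is compact and its image misses $y_0$), one obtains for every $y\in W$ exactly $k$ preimages in $\Lambda$, one in each $U_i$, each having the same local sign as $x_i$. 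Hence $\deg(g,\Lambda,y)=\sum_i\operatorname{sgn}(g,x_i)=\deg(g,\Lambda,y_0)$ on $W$, and combined with the previous paragraph, $\deg(f,\Lambda,\cdot)$ is constant on $W$.

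The remaining step is formal: $\gamma^{-1}(W)$ is an open neighborhood of $t_0$ in $I$ on which $t\mapsto \deg(f,\Lambda,\gamma(t))$ is constant, so this integer-valued function is locally constant on the connected set $I$ and hence globally constant, yielding $\deg(f,\Lambda,\gamma(0))=\deg(f,\Lambda,\gamma(1))$. The main technical subtlety—and the step that most needs care—is ensuring that the neighborhood $V$ of $y_0$ can be chosen to simultaneously (i) lie in the open set $N\setminus H(I\times\partial\Lambda)$ so Theorem~\ref{T-6-7} applies, and (ii) lie in the diffeomorphic image $W$ of each $U_i$ while avoiding the image of the complement; both are compactness arguments that use crucially the relative compactness of $\Lambda\subset M$.
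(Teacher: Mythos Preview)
Your argument is correct and is the standard local-constancy proof of this classical fact. Note, however, that the paper does not actually supply a proof of Theorem~\ref{T-6-8}: the appendix merely records several results from differential topology (Regular Value Theorem, Sard's Theorem, homotopy invariance of degree, etc.) and refers the reader to the textbooks of Guillemin--Pollack, Hirsch, and Milnor for proofs. So there is no ``paper's own proof'' against which to compare yours; what you have written is exactly the sort of argument those references give.

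Two small points. First, the homotopy-invariance result you cite as ``Theorem~\ref{T-6-7}'' is stated in the paper but carries no label, so that cross-reference will be undefined; you should refer to it as the (unnumbered) homotopy-invariance theorem preceding Theorem~\ref{T-6-8}, or invoke Proposition~\ref{P-6-4} together with the definition of degree for continuous maps. Second, in your Inverse Function Theorem step, you implicitly use that $g$ is transverse to every $y\in W$ (not just $y_0$) so that the signed-count definition of degree applies directly at each such $y$; this is true because each $g|_{U_i}$ is a diffeomorphism onto $W$, but it is worth making explicit.
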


\begin{theorem}\label{T-6-9}
If $\deg\big(f,\Lambda,y\big)\neq 0$, then  $f^{-1}(y)\cap\Lambda
\neq\emptyset$.
\end{theorem}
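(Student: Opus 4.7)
The plan is to prove the contrapositive. Assume $f^{-1}(y)\cap\Lambda=\emptyset$; combined with the standing hypothesis $f^{-1}(y)\cap\partial\Lambda=\emptyset$ (required for the degree to even be defined), this gives $y\notin f(\bar{\Lambda})$. Since $\bar{\Lambda}$ is compact and $f$ is continuous, $f(\bar{\Lambda})$ is a compact subset of $N$ missing $y$, so fixing a Riemannian metric on $N$ we can pick $\epsilon>0$ with $\dist\bigl(y,f(\bar{\Lambda})\bigr)>2\epsilon$.

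The central step is to replace $f$ by a smooth map $g$ close enough to $f$ that $y$ still lies outside its image. I would invoke the construction underlying Lemma~\ref{L-6-5}: one first mollifies $f$ in local charts using a partition of unity and then perturbs slightly to achieve transversality via Sard's Theorem. Inspection of that construction shows the resulting $g\in C^\infty(\Lambda,N)\cap C^0(\bar{\Lambda},N)$ may be chosen arbitrarily $C^0$-close to $f$ on $\bar{\Lambda}$. Selecting $g$ with $\sup_{x\in\bar{\Lambda}}\dist\bigl(f(x),g(x)\bigr)<\epsilon$ forces $y\notin g(\bar{\Lambda})$, so $g^{-1}(y)\cap\Lambda=\emptyset$, and the transverse-case definition gives $\deg(g,\Lambda,y)=0$ as an empty signed sum.

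To transport this vanishing back to $f$, I would join $f$ to $g$ by the straight-line homotopy in a tubular neighborhood (equivalently, a short geodesic homotopy) $H:I\times\bar{\Lambda}\to N$. Because $H(t,x)$ always stays within $\epsilon$ of $f(x)$, the whole image $H(I\times\bar{\Lambda})$ lies at distance $>\epsilon$ from $y$, so in particular $H(I\times\partial\Lambda)\subset N\setminus\{y\}$. The definition of $\deg(f,\Lambda,y)$ via Lemma~\ref{L-6-5}, together with the homotopy invariance recorded in Proposition~\ref{P-6-4}, then yields $\deg(f,\Lambda,y)=\deg(g,\Lambda,y)=0$, contradicting the assumption. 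The main obstacle is quantitative: Lemma~\ref{L-6-5} as stated only produces \emph{some} homotopic transverse $g$, with no explicit closeness estimate attached. I would address this by going one step deeper into the standard proof of that lemma, where the mollification-and-small-perturbation construction manifestly yields a $g$ of arbitrarily small $C^0$-distance from $f$, which is exactly the uniform control the argument above requires.
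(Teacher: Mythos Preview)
The paper does not actually prove Theorem~\ref{T-6-9}: the appendix states it without proof, referring the reader to standard texts (Guillemin--Pollack, Hirsch, Milnor) for details. Your argument is the standard one and is correct; the quantitative $C^0$-closeness you flag as an obstacle is indeed built into the usual proof of the approximation lemma, so there is no real gap.
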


\bigskip
\noindent \textbf{Acknowledgements}. The author would like to thank Steven Gortler for a great many helpful conversations and encouragement. He also thanks both the Institute for Computational and Experimental Research Mathematics (ICERM) at Brown University and the organizers for hosting the workshop on circle packings and geometric rigidity that indirectly led to the development of this paper. Finally, he thanks NSFC (No.11601141 and No.11631010) for financial supports.

\bigskip
\noindent Ze Zhou, zhouze@hnu.edu.cn\\[2pt]
\emph{School of Mathematics, Hunan University, Changsha 410082, P.R. China.}
\end{document}